\documentclass[12pt,a4paper]{amsart}

\usepackage[colorlinks=true, urlcolor=black, citecolor=black, linkcolor=black, hyperfootnotes=true]{hyperref}
\usepackage[capitalise]{cleveref}

\usepackage{amsmath,amsthm,amssymb,latexsym,a4wide,tikz,multicol,tikz-cd}
\usepackage{arydshln,multirow}
\usepackage{tikz-qtree,tikz-qtree-compat}
\usepackage{mathtools,stmaryrd}
\tikzset{font=\small}
\usepackage{enumerate}
\usetikzlibrary{matrix,arrows}
\usetikzlibrary{positioning}
\usetikzlibrary{cd}
\usepackage{cite}
\usepackage[active]{srcltx}

\newtheorem{theorem}{Theorem} [section]
\newtheorem{lemma}[theorem]{Lemma}
\newtheorem{corollary}[theorem]{Corollary}
\newtheorem{proposition}[theorem]{Proposition}
\theoremstyle{definition}
\newtheorem{definition}[theorem]{Definition}
\newtheorem{remark}[theorem]{Remark}

\numberwithin{equation}{section}
\usepackage{enumitem}
\setlist[enumerate]{itemsep=0mm}
\setlist[itemize]{itemsep=0mm}
\let\OLDthebibliography\thebibliography
\renewcommand\thebibliography[1]{
  \OLDthebibliography{#1}
  \setlength{\parskip}{0pt}
  \setlength{\itemsep}{0pt plus 0.3ex}
}

\newcommand{\UU}{\mathcal{U}}

\begin{document}

\title[Boolean-Cartan Restriction Semigroups]{Steinberg Algebras of Ample Semicategories and their Boolean-Cartan Restriction Semigroups}

\date{}
\author{Tristan Bice}
\address{T. Bice: Institute of Mathematics of the Czech Academy of Sciences, \v{Z}itn\'a 25, Prague}
\email{bice@math.cas.cz}

\author{Malcolm Jones}
\address{M. Jones: Institute of Mathematics, Physics and Mechanics, Jadranska ulica 19, SI-1000 Ljubljana, Slovenia}
\email{malcolm.jones@imfm.si}

\author{Ganna Kudryavtseva}
\address{G. Kudryavtseva: University of Ljubljana,
Faculty of Mathematics and Physics, Jadranska ulica 19, SI-1000 Ljubljana, Slovenia / Institute of Mathematics, Physics and Mechanics, Jadranska ulica 19, SI-1000 Ljubljana, Slovenia}
\email{ganna.kudryavtseva@fmf.uni-lj.si}

\thanks{The second named author was supported by the Slovenian Research and Innovation Agency grant P1-0288. The third named author was partially supported by the Slovenian Research and Innovation Agency grants P1-0288 and J1-60025. The authors would like to thank Lisa Orloff Clark for helpful comments.}

\begin{abstract}
We extend the construction of Steinberg algebras of ample groupoids to \'etale semicategories.  We also relate ample semicategories to Boolean restriction semigroups via a representation result extending previously known results for categories.  Furthermore, we prove a reconstruction result which characterises an abstract algebra $A$ with a certain Cartan-like restriction subsemigroup $B$ (subject to conditions resembling those defining quasi-Cartan pairs) as the Steinberg algebra of the ultrafilter groupoid of $B$.  In this way we obtain a twist-free extension of previous Steinberg algebra reconstruction results.

\vspace{0.3cm}

{\em Keywords:}  Steinberg algebras; restriction semigroups; semicategories; ultrafilter groupoids; \'etale groupoids; ample groupoids; noncommutative Stone duality.

\vspace{0.3cm}

{MSC 2020:}  
16S99, 
18B40, 
22A22, 
06E15, 
20M99. 
\end{abstract}

\maketitle

\setcounter{tocdepth}{1}
\tableofcontents 

\section{Introduction}

\subsection{Background}

This paper contributes to a broad area of research on the reconstruction of Steinberg algebras and \'etale groupoid $C^*$-algebras, which has been mainly developed in the following two interconnected, yet distinct, directions.

The first direction concerns the recovery of a groupoid from certain algebras and semigroups of functions defined upon it and encompasses the reconstruction results for groupoid $C^*$-algebras \cite{Kumjian1986,Renault2008} and their subsemigroups \cite{BiceClark2021}, Leavitt path algebras \cite{BrownClarkHuef2017} and Steinberg algebras \cite{AraBosaHazratSims2017,CarlsenRout2018}. Extending the latter results, Steinberg showed in \cite[Theorem 5.7]{Ste19} that, under suitable hypotheses, a diagonal-preserving isomorphism between Steinberg algebras induces an isomorphism of the underlying groupoids.

The other direction stems from the work of Kumjian \cite{Kumjian1986} and Renault \cite{Renault2008} who, building on ideas of Feldman and Moore \cite{FeldmanMoore1977} in the setting of von Neumann algebras, introduced the notion of a Cartan pair $(A,B)$, where $A$ is an abstract $C^*$-algebra and $B$ is a maximal abelian subalgebra that is regular, contains an approximate unit for $A$, and admits a faithful conditional expectation from $A$ onto $B$.  Renault proved in \cite[Theorem 5.6]{Renault2008} that, for every Cartan pair $(A,B)$, $A$ is isomorphic to the reduced twisted $C^*$-algebra $C^*_r(G(B), \Sigma(B))$, where $G(B)$ is the Weyl groupoid of $(A,B)$ and $\Sigma(B)$ is the associated Weyl twist.  By axiomatising the relevant properties of normaliser semigroups, this was extended to non-reduced $C^*$-algebras of even non-effective groupoids in \cite{BiceClarkYingFenMcCormick2025}.

Armstrong et al. \cite{9a23} also proved analogous results characterising twisted Steinberg algebras (as introduced in \cite{ACCLMR22}) of effective ample groupoids by introducing quasi-Cartan pairs, a purely algebraic analogue of Renault's Cartan pairs.  Indeed, it is proved in \cite[Theorem 6.6]{9a23} that, given a quasi-Cartan pair $(A,B)$, the algebra $A$ is isomorphic to the twisted Steinberg algebra $A_R(G,\Sigma)$ where $G$ and $\Sigma$ are groupoids constructed using the normaliser semigroup $N_A(B)$ of $B$ in $A$.

Recognising an abstract algebra or an abstract $C^*$-algebra as a twisted Steinberg algebra  or as  a twisted groupoid $C^*$-algebra allows one to apply the well-developed topological and dynamical techniques available for the study of the algebra in question. This also applies to the non-twisted setting. For example, once Leavitt path algebras and graph $C^*$-algebras were recognized to be groupoid algebras \cite{CFST14,KumjianPaskRaeburnRenault1997}, the rapid and unified development of their theory was enabled. Given this, we found it surprising that the question of recognizing an abstract $R$-algebra $A$ as a Steinberg algebra has not been previously considered in the non-twisted setting. 
One of the aims of this paper is to fill in this gap in the literature.

The second aim of this paper is to extend the level of generality of the reconstruction results by working with Steinberg algebras of ample Hausdorff semicategories rather than of ample groupoids. To achieve this, we first define Steinberg algebras of ample Hausdorff semicategories, inspired by the recent notion of a Steinberg algebra of an ample category introduced by Kudryavtseva in \cite{Kudryavtseva2025} and independently as Orloff algebras by Ruiz Campos in \cite{Cam25}.

It is worth emphasizing that, because the setting of semicategories lacks the left-right symmetry of groupoids, the strategy adopted in \cite{Renault2008,Ste19,9a23} of working with the normaliser semigroup $N_A(B)$ does not generalise to the present setting.  We therefore adopt a different strategy by considering an algebra $A$ together with a certain Boolean restriction subsemigroup $B$ which plays a role analogous to that of the normaliser semigroup $N_A(B)$ in earlier work, much like the special semigroups considered in \cite{Bice2023} and \cite{BiceClarkYingFenMcCormick2025}.  In particular, these Boolean-Cartan semigroups have a meet operation which extends linearly to $A$ and which, in groupoid setting, can be reconstructed from the usual conditional expectation on the Steinberg algebra.  Utilising this, we prove that $A$ is in fact isomorphic to the algebra of the ultrafilter semicategory of $B$. Another important distinction between our work and earlier work is that we do not need any local bisection hypothesis (which generalizes the classical no non-trivial units hypothesis required to reconstruct a group from a group ring).

Before proving our reconstruction results, we introduce and explore the notion of an \'etale semicategory and its Steinberg algebra. Furthermore, we work with semi-Boolean restriction semigroups, whose projections form a semi-Boolean algebra rather than a generalized Boolean algebra as for the pre-Boolean restriction semigroups of \cite{Kudryavtseva2025}.\footnote{Our semi-Boolean restriction semigroups are not, however, proper generalizations of pre-Boolean restriction semigroups of \cite{Kudryavtseva2025}, as the semigroups there need not have meets.}  We prove that such a semi-Boolean restriction semigroup $B$ can be represented as a restriction semigroup formed by a basis of compact and open local sections of its groupoid of ultrafilters ${\mathcal U}(B)$. This paves the way for developing a non-commutative duality theory for more general semi-Boolean restriction semigroups without meets, which we aim to develop in future work.

\subsection{Outline}

The structure of the paper is as follows. In Section~\ref{SemiBooleanAlgebras}, we provide preliminaries on semi-Boolean algebras, proving that for such an algebra $B$ its ultrafilter space ${\mathcal U}(B)$ is a locally compact Stone space and that $B$ is isomorphic to a semi-Boolean algebra formed by a basis of compact open sets of  ${\mathcal U}(B)$ -- see \Cref{LocallyCompact,StoneRep}. In Section~\ref{s:3}, we recall the notion of a semigroupoid and study those with a distinguished set of projections.  We then provide preliminaries on restriction semigroups and finally single out a subclass of semigroupoids which we call semicategories.  Further, in Section~\ref{s:4} we define and study \'etale semicategories.  We prove in Proposition \ref{OpenProduct} that the product map in an \'etale semicategory is an open map.  Proposition \ref{SemiSlices} provides a key connection of \'etale semicategories with restriction semigroups showing that the open sections of any \'etale semicategory $S$ form a restriction semigroup relative to the open subsets of $S^0$ and, furthermore, if $S^2$ is closed in $S\times S$, then the compact open sections of $S$ form a restriction semigroup relative to the compact open subsets of $S^0$.

In Section \ref{s:5}, we generalise Steinberg algebras to any Hausdorff \'etale semicategory $S$ where $S^2$ is closed, see \Cref{KSalg}.  We work at the level of generality where the scalars belong to a not necessarily commutative ring $K$ with unit, meaning that by a $K$-algebra we mean a $K$-bimodule $A$ with an associative multiplication which is distributive with respect to addition.  
Section \ref{s:6} develops a topological representation theory of semi-Boolean restriction semigroups as compact open sections of its semicategory of ultrafilters (see Theorem \ref{AmpleSemi}).  Here we also characterise when this semicategory becomes a category or a groupoid -- see Propositions \ref{UltraCategory} and \ref{UltraGroupoid}.

In Section \ref{s:BCrs} we present our central definition of a Boolean-Cartan restriction semigroup $B$ in a $K$-algebra $A$ (Definition \ref{def:BC}) and prove in Theorem \ref{SteinbergAlgebraRepresentation} the reconstruction result that an algebra $A$ containing a Boolean-Cartan restriction semigroup $B$ is isomorphic to the Steinberg algebra of the ultrafilter semicategory of $B$. In Section \ref{s:8} we show that a restriction semigroup $B$ is in fact a Boolean-Cartan restriction semigroup in a certain quotient $KB_0/I$ of the contracted semigroup algebra $KB_0$. Applying the reconstruction result of the previous section, we conclude that if $B$ is a Boolean-Cartan restriction semigroup in an algebra $A$, then $A$ must be isomorphic to $KB_0/I$. We conclude the paper with Section \ref{s:9} which relates our Boolean-Cartan semigroups to various commutative subalgebras studied in \cite{9a23}, where the ring $K$ is assumed to be commutative and indecomposable.

\section{Semi-Boolean Algebras}\label{SemiBooleanAlgebras}

Let us start with some general notation.  Given any relation $R\subseteq T\times U$ and $t\in T$, let
\[t^R=\{u\in U\mathrel{|}t\mathrel{R}u\}.\]
More generally, for any $S\subseteq T$, let
\[S^R=\bigcup_{s\in S}s^R.\]

So if $B$ is a poset and $S\subseteq B$ then, applying the notation given above, $S^\leq$ denotes the upwards-closure while $S^\geq$ denotes the downwards closure of $S$.  In particular, $b^\leq$ is the \emph{principal filter} while $b^\geq$ is the \emph{principal ideal} defined by any given $b\in B$.  A \emph{meet} of $a,b\in B$ is a (necessarily unique) element $a\wedge b\in B$ such that
\[\tag{Meet}a\wedge b=\max(a^\geq\cap b^\geq).\]
A \emph{meet-semilattice} is a poset in which all pairs have meets.
Likewise, a \emph{join} of $a,b\in B$ is a (necessarily unique) element $a\vee b\in B$ such that
\[\tag{Join}a\vee b=\min(a^\leq\cap b^\leq)\]
and a \emph{join-semilattice} is a poset in which all pairs have joins.  A \emph{lattice} is a poset in which all pairs have both meets and joins.  A lattice $B$ is \emph{distributive} if, for all $a,b,c\in B$,
\[\tag{Distributivity}a\wedge(b\vee c)=(a\wedge b)\vee(a\wedge c).\]

If a poset $B$ has a minimum $0$, we call $a,b\in B$ \emph{disjoint} if $a\wedge b=0$.  We denote this by $\top$, i.e.
\[a\mathrel{\top}b\qquad\Leftrightarrow\qquad a\wedge b=0.\]
A \emph{pseudocomplement} of $b\in B$ is a (necessarily unique) element $b'\in B$ such that
\[\tag{Pseudocomplement}b'=\max(b^\top).\]
For all $a, b \in B$, if $a \leq b$, then $b^\top \subseteq a^\top$. If further $a$ and $b$ have pseudocomplements, then $b' \in b^\top \subseteq a^\top$, and so $b' \leq a'$.
We call $B$ \emph{pseudocomplemented} if every $b\in B$ has a pseudocomplement.  If $B$ also has a maximum $1$ then $a,b\in B$ are \emph{complements} when $a\wedge b=0$ and $a\vee b=1$.  We call $B$ \emph{complemented} if every $b\in B$ has a complement.  A \emph{Boolean algebra} is a complemented distributive lattice.  If $a$ and $b$ are complements in a Boolean algebra $B$ and $c \in b^\top$, then $c = 1 \wedge c = (b \vee a) \wedge c = (b \wedge c) \vee (a \wedge c) = 0 \vee (a \wedge c) = a \wedge c$. Hence, $a = \max(b^\top) = b'$. 
That is, complements in a Boolean algebra $B$ are pseudocomplements and hence $b''=b$, for all $b\in B$.  In the presence of meets or joins, this in fact characterises Boolean algebras, as first noted by Byrne in \cite{Byrne1946}.

\begin{proposition}\label{Byrne}
    If a poset $B$ is pseudocomplemented with $b''=b$, for all $b\in B$, and $B$ is either a meet-semilattice or a join-semilattice then $B$ is in fact a Boolean algebra.
\end{proposition}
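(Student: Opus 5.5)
The plan is to show that the pseudocomplement map $b\mapsto b'$ is an order-reversing involution of $B$, hence an order anti-automorphism. Meets and joins then transfer to each other, so $B$ is a lattice. The remaining work is to get complements and distributivity.

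\emph{Step 1 (duality).} As noted above, $a\leq b$ implies $b'\leq a'$. Applying this twice and using $b''=b$, we get $a\leq b\iff b'\leq a'$, so $'$ is an order-reversing bijection of $B$. Such a map sends $\max(a^\geq\cap b^\geq)$ to $\min(a'^\leq\cap b'^\leq)$. Hence if $B$ is a meet-semilattice, then $a\vee b=(a'\wedge b')'$ exists, and dually in the join case. Either way $B$ is a lattice. It has the minimum $0$, which is implicit in the definition of $\top$, and the maximum $1=0'$, because $0^\top=B$.

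\emph{Step 2 (complements).} For each $b$ we have $b\wedge b'=0$ by definition. Also $b\vee b'=(b'\wedge b'')'=(b'\wedge b)'=0'=1$. So $b'$ is a complement of $b$, and $B$ is complemented.

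\emph{Step 3 (distributivity).} This is the main obstacle, since a complemented lattice need not be distributive. The key lemma I would prove first is
\[a\wedge b=0\iff a\leq b'.\]
This holds by the definition of $b'=\max(b^\top)$ together with the downward closure of $b^\top$. From it we get, for any $c$,
\[c\wedge(a\vee b)=0\iff c\leq (a\vee b)'=a'\wedge b'\iff (c\wedge a=0\text{ and }c\wedge b=0).\]

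To show $a\wedge(b\vee c)\leq(a\wedge b)\vee(a\wedge c)$, set $d=(a\wedge b)\vee(a\wedge c)$ and $x=a\wedge(b\vee c)\wedge d'$. It suffices to prove $x=0$: then, applying the lemma to $x=(a\wedge(b\vee c))\wedge d'$, we get $a\wedge(b\vee c)\leq d''=d$. Now $x\wedge b\leq a\wedge b\leq d$ and $x\wedge b\leq d'$, so $x\wedge b\leq d\wedge d'=0$. Likewise $x\wedge c=0$. By the displayed equivalence, $x\wedge(b\vee c)=0$. But $x\leq b\vee c$, so $x=0$. The reverse inequality $(a\wedge b)\vee(a\wedge c)\leq a\wedge(b\vee c)$ holds in any lattice.

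Therefore $B$ is a complemented distributive lattice, i.e.\ a Boolean algebra.
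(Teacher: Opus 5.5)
Your proof is correct and takes essentially the same route as the paper. The pseudocomplement is an order anti-automorphism, which gives the lattice structure with $1=0'$. Pseudocomplements are complements because $b\vee b'=(b'\wedge b)'=0'$. Distributivity comes from showing $a\wedge(b\vee c)\wedge d'=0$ for $d=(a\wedge b)\vee(a\wedge c)$, using $(b\vee c)'=b'\wedge c'$. The only cosmetic differences are that the paper first assumes $a\leq b\vee c$ and works with $a\wedge d'$, whereas you work directly with $a\wedge(b\vee c)$, and that you state the lemma $x\wedge y=0\Leftrightarrow x\leq y'$ explicitly where the paper uses it implicitly.
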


\begin{proof}
As $b''=b$ and $b'\leq a'$ whenever $a\leq b$, the pseudocomplement operation is an anti-isomorphism on $B$ and thus maps meets/joins to joins/meets.  As $B$ is a meet/join-semilattice with minimum $0$, it follows that $B$ is also a join/meet-semilattice with maximum $1=0'$.  Moreover, as $b\wedge b'=0$ it follows that $b'\vee b=b'\vee b''=0'=1$, for all $b\in B$, i.e.~pseudocomplements are complements.  To see that $B$ is also distributive, say $a\leq b\vee c$ and let $d=a\wedge((a\wedge b)\vee(a\wedge c))'$.  Then $d\wedge b=d\wedge a\wedge b=0$ and $d\wedge c=d\wedge a\wedge c=0$ so $d\leq b'\wedge c'=(b\vee c)'$ even though $d\leq a\leq b\vee c$.  Thus $d=0$ so $a\leq((a\wedge b)\vee(a\wedge c))''=(a\wedge b)\vee(a\wedge c)$, as required.
\end{proof}

As in \cite[Definition 7.1]{Abbott1969}, when $B$ is a meet-semilattice and every principal ideal $b^\geq$ is a Boolean algebra, we call $B$ a \emph{semi-Boolean algebra}.\footnote{This differs from \cite[Definition 2.2]{Steinberg2010}, where semi-Boolean algebras are merely posets such that every principal ideal is a Boolean algebra.}  Every semi-Boolean algebra $B$ has a minimum $0$, for if $a\in B$ and $0$ is the minimum of $a^\geq$ then, for any $b\in B$, we have $b\wedge0\in 0^\geq\subseteq a^{\geq}$ so $0\leq b\wedge 0\leq b$.  Also, as $B$ is a meet-semilattice, any join in a principal ideal $b^\geq$ remains a join in the larger poset $B$. Indeed, let $s,t\in b^\geq$ and let $a$ be their join in $b^\geq$. To show that their join in $B$ exists and equals $a$, let $d\in B$ be such that $s,t\leq d$. Then $s,t\leq a\wedge d$ and since $a\wedge d \in b^\geq$ we have $a\leq a\wedge d$. Hence $a\leq d$, as required. Thus any semi-Boolean algebra $B$ is a \emph{conditional join-semilattice} in the sense that all bounded pairs have joins, i.e.
\[\tag{Conditional Join-Semilattice}a,b\leq c\qquad\Rightarrow\qquad a\vee b\text{ exists}.\]
Moreover, any semi-Boolean algebra $B$ is \emph{relatively pseudocomplemented} in that, for any $a,b\in B$, there is a (necessarily unique) \emph{relative pseudocomplement} $b\setminus a\in B$ satisfying
\[\tag{Relative Pseudocomplement}b\setminus a=\max(b^\geq\cap a^\top).\]
Indeed, $b\setminus a$ is precisely the complement of $a\wedge b$ in $b^\geq$.

Stone's classic representation of Boolean algebras as clopen subsets of Stone spaces (see \cite{Stone1937}) extends to semi-Boolean algebras, as we now proceed to show.  First recall that a \emph{filter} in a poset $B$ is a non-empty subset $F\subseteq B$ satisfying both
\[\tag{Up-set}\label{Upset}F^\leq\subseteq F\]
and
\[\tag{Directed}\label{Directed}a,b\in F\quad\Rightarrow\quad F\cap a^\geq\cap b^\geq\neq\emptyset.\]
If $B$ is a meet-semilattice then an up-set $F$ is directed and hence a filter precisely when it is closed under taking meets, i.e.~when $F\wedge F\subseteq F$.  We say $F$ is \emph{proper} when $F\neq B$.  If $B$ is a meet-semilattice with minimum $0$ then a filter $F\subseteq B$ is proper precisely when
\[
\{b\in F\mathrel{|}0\in b\wedge F\} \subseteq B\setminus F.
\]
An \emph{ultrafilter} is a maximal proper filter, which can also be characterised by the reverse inclusion above (e.g.~see also \cite[Lemma 3.2]{Exel2009}).

\begin{lemma}\label{UltrafilterComplements}
    A proper filter $U$ in a meet-semilattice $B$ with minimum $0$ is an ultrafilter if and only if
    \[B\setminus U\subseteq\{b\in B\mathrel{|}0\in b\wedge U\}.\]
\end{lemma}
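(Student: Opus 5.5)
We prove the two implications separately. Throughout, $U$ is a proper filter in the meet-semilattice $B$ with minimum $0$. Since $U$ is a proper up-set, $0\notin U$; as $U$ is closed under meets, no element of $U$ meets $U$ in $0$.

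\emph{Ultrafilter implies the inclusion.} Suppose $U$ is an ultrafilter and take $b\in B\setminus U$. Consider the up-set
\[F=(b\wedge U)^\leq=\{c\in B\mathrel{|}b\wedge u\leq c\text{ for some }u\in U\}.\]
This $F$ is non-empty and upward closed. It is also closed under meets: if $b\wedge u\leq c$ and $b\wedge v\leq d$, then $b\wedge(u\wedge v)\leq c\wedge d$, and $u\wedge v\in U$.

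Next, $F$ contains $U$, because $b\wedge u\leq u$. It also contains $b$. As $b\notin U$, the filter $F$ strictly contains $U$.

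By maximality of $U$ among proper filters, $F$ is not proper, so $F=B$ and in particular $0\in F$. Hence $b\wedge u\leq 0$ for some $u\in U$, that is, $b\wedge u=0$. This shows $0\in b\wedge U$.

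\emph{The inclusion implies ultrafilter.} Conversely, assume $B\setminus U\subseteq\{b\mathrel{|}0\in b\wedge U\}$. Let $F\supseteq U$ be a proper filter; we must show $F=U$.

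Suppose instead there is some $b\in F\setminus U$. By hypothesis $b\wedge u=0$ for some $u\in U\subseteq F$. Since $F$ is closed under meets, $0=b\wedge u\in F$. Because $F$ is an up-set, this forces $F=B$, contradicting properness. Hence $F=U$, and $U$ is maximal among proper filters, i.e.\ an ultrafilter.

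There is no real obstacle; the only point requiring care is choosing the right enlargement $(b\wedge U)^\leq$ in the first direction and checking it is a filter. We use the paper's observation that up-sets closed under meets are filters, and no Zorn's lemma is needed.
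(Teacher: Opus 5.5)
Your proposal is correct and follows essentially the same proof as the paper. In the forward direction you use the same enlargement $(b\wedge U)^\leq$, arguing directly that it is improper rather than by contradiction. The converse is the same one-line meet argument.
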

\begin{proof}
    Take $a \in B \setminus U$.  If $0 \notin a \wedge U$ then $(a\wedge U)^\leq$ is a proper filter containing $U$.  If $U$ is an ultrafilter then maximality implies $a\in(a\wedge U)^\leq=U$, showing $B\setminus U\subseteq U$.

    Conversely, say $B\setminus U\subseteq \{b\in B\mathrel{|}0\in b\wedge U\}$.
    Suppose $U \subseteq V$ for some filter $V$.
    If $v \in V \setminus U \subseteq B \setminus U$, then there is some $u \in U$ such that $0 = v \wedge u$.
    Since $U \subseteq V$, we have $0 \in V$, and so $U$ is an ultrafilter.
\end{proof}

The \emph{ultrafilter space} of $B$ is the set of ultrafilters $\mathcal{U}(B)$ topologised by the basis $(O_b)_{b\in B}$ where
\[O_b=O_b^B=b^\in\cap\mathcal{U}(B)=\{U\in\mathcal{U}(B)\mathrel{|}b\in U\}.\]

Ultrafilter spaces always satisfy the following separation property.

\begin{proposition}\label{T1}
    The ultrafilter space $\mathcal{U}(B)$ of any poset $B$ is $T_1$.
\end{proposition}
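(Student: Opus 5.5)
To show that $\mathcal{U}(B)$ is $T_1$, we show that every singleton $\{U\}$ is closed. Equivalently, for any two distinct ultrafilters $U\neq V$ we find a basic open set $O_b$ with $V\in O_b$ and $U\notin O_b$.

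The key observation is that ultrafilters are pairwise incomparable under inclusion. An ultrafilter is by definition a maximal proper filter. So if $V\subseteq U$ with both ultrafilters, maximality of $V$ together with properness of $U$ forces $V=U$. Hence, when $U\neq V$, we cannot have $V\subseteq U$, and there is some $b\in V\setminus U$. This $b$ gives $V\in O_b$ and $U\notin O_b$, since $O_b=\{W\in\mathcal{U}(B)\mathrel{|}b\in W\}$.

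It follows that the complement of $\{U\}$ is
\[\mathcal{U}(B)\setminus\{U\}=\bigcup_{b\in B\setminus U}O_b.\]
Each $O_b$ with $b\notin U$ excludes $U$, and every $V\neq U$ lies in one such $O_b$ by the previous paragraph. The complement is therefore open, so $\{U\}$ is closed.

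No step here is a real obstacle. The only point needing care is that the argument uses nothing beyond the poset setting, in particular no meets. Maximality among proper filters alone gives incomparability, so the statement holds for an arbitrary poset $B$, as claimed.
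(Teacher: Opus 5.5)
Your proof is correct and follows essentially the same route as the paper's: maximality makes distinct ultrafilters incomparable, which gives some $b\in V\setminus U$ and hence a basic open $O_b$ containing $V$ but not $U$. Your extra step of writing $\mathcal{U}(B)\setminus\{U\}$ as the union of such $O_b$ just makes explicit the standard equivalence between this separation property and closed singletons.
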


\begin{proof}
    Given distinct $V,W\in\mathcal{U}(B)$, we have $v\in V\setminus W$ by maximality so $V\in O_v\not\ni W$.
\end{proof}

If $B$ has a minimum $0$, restricting to any principal ideal does not affect the topology.

\begin{proposition}\label{PrincipalRestriction}
    If $B$ is a poset with minimum $0$ then, for any $a\in B$, we have mutually inverse homeomorphisms from $O_a$ to $\mathcal{U}(a^\geq)$ and vice versa given by
    \[U\mapsto U\cap a^\geq\qquad\text{and}\qquad U\mapsto U^\leq.\]
\end{proposition}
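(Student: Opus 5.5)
The plan is to check four things in turn: the two maps are well defined, they are mutually inverse, and each is continuous. Here $B$ is only a poset with minimum $0$, so I will use the definitions of filter and ultrafilter directly rather than \Cref{UltrafilterComplements}, which assumes meets.

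\emph{The first map.} Take $U\in O_a$, so $a\in U$. The set $U\cap a^\geq$ is nonempty because it contains $a$. It is an up-set in $a^\geq$ since $U$ is an up-set. For directedness, given $b,c\in U\cap a^\geq$, directedness of $U$ gives some $d\in U$ below both, and such a $d$ lies in $a^\geq$ automatically. The filter $U\cap a^\geq$ is proper because $0\notin U$; here I use that a filter is proper exactly when it omits $0$, as it is an up-set and $0$ is the minimum. For maximality, let $V\supseteq U\cap a^\geq$ be a proper filter in $a^\geq$. Then $V^\leq$ is a filter in $B$: it is nonempty, an up-set, and directed, since lower bounds found in $V$ serve in $B$. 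It is proper because $0\notin V$. It contains $U$, since any $u\in U$ has some $d\in U$ below both $u$ and $a$, so $d\in U\cap a^\geq\subseteq V$. By maximality of $U$ we get $V^\leq=U$, hence $V\subseteq U\cap a^\geq$.

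\emph{The second map.} Take $W\in\mathcal U(a^\geq)$. By the same checks, $W^\leq$ is a proper filter in $B$ containing $a$, because $W$ is nonempty and lies below $a$. For maximality, let $F\supseteq W^\leq$ be a proper filter in $B$. Then $F\cap a^\geq$ is a proper filter in $a^\geq$ containing $W$, so it equals $W$. Any $f\in F$ has a common lower bound with $a$ in $F$, which lies in $F\cap a^\geq=W$; hence $f\in W^\leq$.

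\emph{Inverses.} For $U\in O_a$ we have $(U\cap a^\geq)^\leq\subseteq U$ because $U$ is an up-set, and the reverse inclusion holds by the lower-bound argument above. For $W\in\mathcal U(a^\geq)$ we have $W^\leq\cap a^\geq=W$, since $W$ is upward closed within $a^\geq$.

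\emph{Continuity.} Basic open sets pull back to open sets in both directions. For $b\le a$, the set $\{U\in O_a: b\in U\cap a^\geq\}=O_b\cap O_a$ is open. For $b\in B$, the set $\{W: b\in W^\leq\}$ is the union of the basic sets $O^{a^\geq}_c$ over $c\in b^\geq\cap a^\geq$, which is open.

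The main obstacle is the maximality arguments. Without meets I cannot take $a\wedge u$, so I must keep relying on directedness to produce common lower bounds inside the filters.
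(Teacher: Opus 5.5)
Your proof is correct and follows essentially the same route as the paper: both rest on the directedness argument giving $(U\cap a^\geq)^\leq=U$ and $W^\leq\cap a^\geq=W$, and both handle the topology by matching the sets $O_b$ for $b\le a$ with the basic sets $O^{a^\geq}_b$. The only difference is organisational: the paper first obtains an inclusion-preserving bijection between all filters of $B$ containing $a$ and all filters of $a^\geq$ that respects properness, so maximality transfers at once, whereas you verify maximality separately in each direction.
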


\begin{proof}
    If $F\subseteq B$ is a filter containing $a$ then $F\cap a^\geq$ is a filter in $a^\geq$.  Moreover, for any $f\in F$, we must have $b\in F$ with $b\leq a,f$ so $f\in b^\leq\subseteq(F\cap a^\geq)^\leq$, showing that $(F\cap a^\geq)^\leq=F$.  Conversely, if $F$ is a filter in $a^\geq$ then $F^\leq$ is a filter in $B$ containing $a$ with $F^\leq\cap a^\geq=F$.  Thus the given operations are indeed mutually inverse bijections on filters.  They are also immediately seen to respect properness (as the proper filters are precisely those avoiding $0$), the containment relation $\subseteq$ and hence maximality so they must restrict to bijections of ultrafilters as well.  Lastly note that $O_a$ has a basis of sets of the form $(O_b)_{b\leq a}$.  But these subsets of $\mathcal{U}(B)$ correspond to the basis $(O^{a^\geq}_b)_{b\leq a}$ of $\mathcal{U}(a^\geq)$ under the given bijections.  Thus these maps are indeed homeomorphisms.
\end{proof}

Recall that a space is \emph{$0$-dimensional} if it has a basis of clopen sets.  Any $0$-dimensional $T_1$ space is automatically Hausdorff (and hence totally disconnected), which thus applies to $\mathcal{U}(B)$ in the following situation.

\begin{proposition}\label{0dim}
    If $B$ is a meet-semilattice with minimum $0$ then $\mathcal{U}(B)$ is $0$-dimensional.
\end{proposition}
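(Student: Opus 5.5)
It suffices to show that each basic open set $O_b$ is also closed, since the sets $(O_b)_{b\in B}$ already form a basis of $\mathcal{U}(B)$. So fix $b\in B$; the plan is to show that the complement $\mathcal{U}(B)\setminus O_b$ is open, by finding, for each ultrafilter $U$ with $b\notin U$, a basic open neighbourhood of $U$ disjoint from $O_b$.

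Take $U\in\mathcal{U}(B)$ with $b\notin U$. Since $B$ is a meet-semilattice with minimum $0$, \Cref{UltrafilterComplements} applies. As $b\in B\setminus U$, it gives some $u\in U$ with $b\wedge u=0$. Then $U\in O_u$, and we claim $O_u\cap O_b=\emptyset$. Indeed, if $V\in O_u\cap O_b$, then $u,b\in V$. Filters in a meet-semilattice are closed under meets, so $0=u\wedge b\in V$. This contradicts properness of $V$, because $0\in V$ forces $V=0^\leq=B$.

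Hence every point of $\mathcal{U}(B)\setminus O_b$ has a basic neighbourhood inside $\mathcal{U}(B)\setminus O_b$. So that set is open, and $O_b$ is clopen. Since the clopen sets $O_b$ form a basis, $\mathcal{U}(B)$ is $0$-dimensional.

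There is no real obstacle. The only substantive input is the characterisation of ultrafilters in \Cref{UltrafilterComplements}, which supplies the disjoint witness $u$. The meet-semilattice hypothesis is used twice: to apply that lemma, and to conclude that $u\wedge b=0$ lies in any filter containing both $u$ and $b$.
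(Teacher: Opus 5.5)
Your proof is correct and follows the paper's own argument. You use \Cref{UltrafilterComplements} to get $u\in U$ with $b\wedge u=0$, so that $O_u$ is a neighbourhood of $U$ disjoint from $O_b$, which makes each basic set $O_b$ clopen.
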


\begin{proof}
    It suffices to show that each $O_a$ is closed.  Accordingly, take any $U\in\mathcal{U}(B)\setminus O_a$.  This means $a\notin U$ so we must have $u\in U$ with $a\mathrel{\top}u$ by \Cref{UltrafilterComplements}.
    Also, $a \mathrel{\top} u$  implies $O_a \cap O_u = \emptyset$, so $U\in O_u \subseteq \mathcal{U}(B)\setminus O_a$. Hence $\mathcal{U}(B)\setminus O_a=\bigcup\{O_b\mathrel{|}a\mathrel{\top}b\}$ is open, i.e.~$O_a$ is closed.
\end{proof}

\begin{theorem}\label{LocallyCompact}
    If $B$ is a semi-Boolean algebra then $\mathcal{U}(B)$ is locally compact.
\end{theorem}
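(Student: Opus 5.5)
The plan is to show that each basic open set $O_a$, for $a\in B$, is compact. Since $(O_a)_{a\in B}$ is a basis and every ultrafilter $U$ is non-empty, $U$ lies in some $O_a$. Each $O_a$ is also open, and $\mathcal{U}(B)$ is Hausdorff by \Cref{T1} and \Cref{0dim}. So every point will have a compact open neighbourhood, and $\mathcal{U}(B)$ is locally compact in the strongest sense.

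To prove $O_a$ compact, use \Cref{PrincipalRestriction}, which gives a homeomorphism $O_a\cong\mathcal{U}(a^\geq)$. Since $B$ is semi-Boolean, $a^\geq$ is a Boolean algebra. It then suffices to recall that the ultrafilter space of a Boolean algebra is compact, which is Stone's theorem. For completeness I would include the short argument in $a^\geq$.

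Suppose $\mathcal{U}(a^\geq)=\bigcup_{i}O_{b_i}$ but no finite subfamily covers. Then for every finite set $F$ of indices the element $\bigwedge_{i\in F}b_i'$ is non-zero. If it were $0$, then $\bigvee_{i\in F}b_i=1=a$, and any ultrafilter, containing $a$, would contain some $b_i$ with $i\in F$. This last step is the standard fact that ultrafilters in a Boolean algebra are prime: if $b\vee c\in U$ and $b\notin U$, then by \Cref{UltrafilterComplements} there is $u\in U$ with $u\wedge b=0$, so $u\le b'$. Hence $b'\in U$, and $(b\vee c)\wedge b'=c\wedge b'\le c$ lies in $U$, giving $c\in U$.

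So the elements $b_i'$ generate a proper filter, which by Zorn's lemma extends to an ultrafilter $V$. Here we use that unions of chains of proper filters remain proper filters. Since $V$ contains each $b_i'$, it cannot contain any $b_i$, contradicting the cover.

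The main obstacle is simply these Boolean facts, namely primeness of ultrafilters and the existence of ultrafilters via Zorn's lemma. Both are routine. Everything specific to semi-Boolean algebras is already handled by \Cref{PrincipalRestriction}.
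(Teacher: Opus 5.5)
Your proposal is correct and matches the paper's proof: each $O_a$ is compact because it is homeomorphic to $\mathcal{U}(a^\geq)$ by \Cref{PrincipalRestriction}, and $\mathcal{U}(a^\geq)$ is compact by Stone's theorem, so $\mathcal{U}(B)$ has a basis of compact open sets. The only difference is that the paper simply cites Stone's theorem, while you include the standard primeness, finite-intersection and Zorn argument for it, and that argument is sound.
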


\begin{proof}
    For each $b\in B$, Stone's original representation theorem says that $\mathcal{U}(b^\geq)$ is compact and hence so too is $O_b$, by \Cref{PrincipalRestriction}, i.e.~$\mathcal{U}(B)$ has a basis of compact open sets so $\mathcal{U}(B)$ is locally compact.
\end{proof}

\begin{remark} Combined with \Cref{T1,0dim}, this means $\mathcal{U}(B)$ locally compact, Hausdorff and $0$-dimensional when $B$ is a semi-Boolean algebra.  This can be viewed as a special case of a recent more general result for difference-restriction algebras of  \cite{BKL25}. Specifically, semi-Boolean algebras are precisely subtraction algebras $({\mathfrak{A}}, \cdot, -)$ where $\cdot$ is the meet and $-$ the relative complement operations (see \cite{BKL25} and references therein). As noted in \cite{BKL25}, setting $\triangleright = \cdot$ yields a difference--restriction algebra structure on a subtraction algebra ${\mathfrak{A}}$, hence subtraction algebras form a subclass of difference-restriction algebras. The Hausdorff \'etale space $\pi: X \twoheadrightarrow X_0$ of ultrafilters of a subtraction algebra ${\mathfrak{A}}$ is simply an identity map on a Hausdorff $0$-dimensional and locally compact space $X_0$, which is precisely the ultrafilter space of ${\mathfrak{A}}$, see \cite[Section 6]{BKL25}.
\end{remark}

The compact open subsets of any Hausdorff space form a semi-Boolean algebra ordered by inclusion, one which is even a join-semilattice.  Conversely, while a semi-Boolean algebra need only be a conditional join-semilattice, it can still always be faithfully represented as a basis of compact open subsets of some Hausdorff space, namely its ultrafilter space.

\begin{theorem}\label{StoneRep}
    If $B$ is a semi-Boolean algebra then the map $b\mapsto O_b$ is an order isomorphism from $B$ onto a basis of compact open subsets of $\mathcal{U}(B)$.
\end{theorem}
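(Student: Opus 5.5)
We need to show four things: each $O_b$ is compact open, the sets $O_b$ form a basis, $a\leq b$ iff $O_a\subseteq O_b$, and the map is surjective onto its image. Surjectivity is trivial, so an order isomorphism onto $\{O_b\mathrel{|}b\in B\}$ just means $b\mapsto O_b$ is an order embedding.

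The sets $O_b$ form a basis by the definition of the topology on $\mathcal{U}(B)$; they are open by definition. They are compact by the proof of \Cref{LocallyCompact}: Stone's theorem gives compactness of $\mathcal{U}(b^\geq)$, and \Cref{PrincipalRestriction} transfers this to $O_b$. Monotonicity is immediate: if $a\leq b$ and $a\in U$ then $b\in U$ because ultrafilters are up-sets, so $O_a\subseteq O_b$.

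The main step is the converse: if $a\not\leq b$ then $O_a\not\subseteq O_b$. For this I will use the relative pseudocomplement $c=a\setminus b$, the complement of $a\wedge b$ in the Boolean algebra $a^\geq$.

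1. First show $c\neq 0$. If $c=0$ then $a\wedge b$ has complement $0$ in $a^\geq$, so $a\wedge b=a$ and hence $a\leq b$, a contradiction.
2. Since $c\neq0$, the principal filter $c^\leq$ is proper. Extend it by Zorn's lemma to an ultrafilter $U$; the union of a chain of filters avoiding $0$ is again a filter avoiding $0$, so Zorn applies.
3. Then $c\in U$, and $c\leq a$ gives $a\in U$.
4. If also $b\in U$, then $c\wedge b\in U$. But $c\wedge b=0$ because $c\in b^\top$, contradicting properness. So $b\notin U$.

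Thus $U\in O_a\setminus O_b$, and the map is an order embedding, hence an order isomorphism onto the basis $\{O_b\}$.

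The only delicate point is checking that $a\setminus b$ behaves as claimed. This reduces to Boolean reasoning inside $a^\geq$, together with the fact that meets in $a^\geq$ agree with meets in $B$. That fact holds because $a^\geq$ is downward closed.
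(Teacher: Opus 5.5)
Your proof is correct and follows essentially the same route as the paper: compactness comes from Stone's theorem transferred via \Cref{PrincipalRestriction}, and the nontrivial order implication comes from the relative pseudocomplement together with the Kuratowski--Zorn lemma. The paper phrases that last step through the identity $O_b\setminus O_a=O_{b\setminus a}$, whereas you directly build an ultrafilter in $O_a\setminus O_b$ from $a\setminus b\neq0$, which uses only the inclusion $O_{a\setminus b}\subseteq O_a\setminus O_b$ that is actually needed.
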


\begin{proof}
    For each $b\in B$, Stone's original representation theorem says that $\mathcal{U}(b^\geq)$ is compact and Hausdorff and hence so too is $O_b$, by \Cref{PrincipalRestriction}.  Moreover, these sets form a basis, by the definition of the topology of the ultrafilter space.  To see that $b\mapsto O_b$ is an order isomorphism note that, as in the proof of \Cref{0dim}, for all $a,b\in B$,
    \[O_b\setminus O_a=\bigcup\{O_c\mathrel{|}a\mathrel{\top}c\leq b\}=O_{b\setminus a}.\]
    Thus $O_b\subseteq O_a$ precisely when $O_{b\setminus a}=\emptyset$.  This is equivalent to $b\setminus a=0$ and hence to $b\leq a$, as the Kuratowski-Zorn lemma yields $U\in O_{b\setminus a}$ whenever $b\setminus a\neq0$.
    \end{proof}
     
\begin{remark} Theorem \ref{StoneRep} can be viewed as a special case of \cite{BKL25}, specifically of the construction and the properties of the functor $F$ \cite[Section 3.1]{BKL25}, when a difference restriction algebra therein is a subtraction algebra.
\end{remark}

For any meet-semilattice $B$ and any $a,b\in B$, we know that $O_{a\wedge b}=O_a\cap O_b$, as $a\wedge b\in U$ if and only if $a,b\in U$, for any (ultra)filter $U$ in $B$.  In other words, the map $b\mapsto O_b$ preserves meets.  If $B$ is semi-Boolean then $b\mapsto O_b$ also preserves joins whenever they exist.  
That is, $O_{a \vee b} = O_a \cup O_b$ for all $a, b \in B$ with a join $a \vee b$.
This follows from the fact that every ultrafilter $U$ in a semi-Boolean algebra $B$ is a \emph{prime filter} in the sense that $U$ is a proper filter and, for all $a,b\in B$ with a join $a\vee b$,
\[\tag{Prime}a\vee b\in U\qquad\Rightarrow\qquad a\in U\text{ or }b\in U.\]

\begin{proposition}\label{UltraPrime}
    In any semi-Boolean algebra $B$, ultrafilters and prime filters coincide.
\end{proposition}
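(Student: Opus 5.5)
We prove the two inclusions separately, using \Cref{UltrafilterComplements} throughout to move between maximality and the disjointness condition.

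\emph{Ultrafilters are prime.} Let $U$ be an ultrafilter and suppose $a\vee b\in U$ but $a,b\notin U$. By \Cref{UltrafilterComplements} there are $u,v\in U$ with $a\wedge u=0$ and $b\wedge v=0$. Put $w=u\wedge v\wedge(a\vee b)\in U$. Then $w\leq a\vee b$, so $w$, $a$ and $b$ all lie in the principal ideal $(a\vee b)^\geq$. This ideal is a Boolean algebra, and the join $a\vee b$ computed in $B$ is also its join there. Distributivity in that Boolean algebra gives
\[w=w\wedge(a\vee b)=(w\wedge a)\vee(w\wedge b)=0\vee0=0.\]
Hence $0\in U$, which contradicts properness. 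So $a\in U$ or $b\in U$.

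\emph{Prime filters are ultrafilters.} Let $P$ be a prime filter and take $a\in B\setminus P$. By \Cref{UltrafilterComplements} it suffices to find $p\in P$ with $a\wedge p=0$. Pick any $p\in P$, which is possible since $P$ is non-empty. In the Boolean algebra $p^\geq$ we have
\[p=(p\wedge a)\vee(p\setminus a).\]
This join exists in $B$ because it exists in $p^\geq$, which holds for semi-Boolean algebras as noted in the text. As $p\in P$ and $P$ is prime, either $p\wedge a\in P$ or $p\setminus a\in P$. The first option is impossible: $p\wedge a\leq a$ and $P$ is an up-set, so it would force $a\in P$. Therefore $p\setminus a\in P$. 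Since $(p\setminus a)\wedge a=0$ by the definition of relative pseudocomplement, $p\setminus a$ is the required element.

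The only delicate point is making sure the joins used are genuine joins in $B$, so that primeness, which refers to joins in $B$, applies. It is also needed so that distributivity can be used inside the Boolean principal ideal. Both concerns are settled by the earlier observation that joins in a principal ideal remain joins in $B$.
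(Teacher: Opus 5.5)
Your proof is correct and follows the paper's argument essentially step for step: ultrafilters are prime by the same distributivity computation in the Boolean ideal $(a\vee b)^\geq$ (you fold $a\vee b$ into $w$ rather than testing an arbitrary $c\leq x,a\vee b$), and prime filters are ultrafilters via the same splitting $u=b\vee(u\setminus b)$, which you run directly as $p=(p\wedge a)\vee(p\setminus a)$ instead of by contrapositive. Only your closing remark is slightly off: in the first direction you actually use the trivial fact that a join in $B$ lying in $(a\vee b)^\geq$ is a join there, not the cited converse observation that joins in principal ideals remain joins in $B$.
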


\begin{proof}
    By \Cref{UltrafilterComplements}, the ultrafilters in $B$ are precisely those proper filters $U\subseteq B$ such that
    \[B\setminus U \subseteq \{b\in B\mathrel{|}0\in b\wedge U\}.\]
    Thus if $U$ is an ultrafilter and we have bounded $a,b\in B\setminus U$ then we have $v,w\in U$ with $a\wedge v=0=b\wedge w$.  Setting $x=v\wedge w\in U$, it follows that $a\wedge x=0=b\wedge x$.  
    Suppose $c \leq x, a \vee b$. Because $(a \vee b)^\geq$ is distributive, we have $(a \wedge c) \vee (b \wedge c) = (a \vee b) \wedge c = c$. Also, $a \wedge c \leq a \wedge x = 0$, so $a \wedge c = 0$. Similarly, $b \wedge c = 0$, and so $0 = 0 \vee 0 = c$. Thus, $(a \vee b) \wedge x = 0$. It follows $a\vee b\notin U$, showing that $U$ is prime.
    
    Conversely, if a proper filter $U\subseteq B$ is not an ultrafilter then there is a $b\in B\setminus U$ with $0\notin b\wedge U$. 
    Take any $u\in U$ note that $b\wedge u\notin U$, as $b\notin U$, and $0\notin b\wedge U\supseteq b\wedge u\wedge U$.  So replacing $b$ with $b\wedge u$ if necessary, we may assume $b\leq u$.  But then $b\vee(u\setminus b)=u\in U$, and $u \setminus b \not\in U$ -- otherwise $0 = b \wedge (u \setminus b) \in b \wedge U$. Thus $b, u \setminus b \not\in U$, and so $U$ is not prime.
\end{proof}

\section{Semigroupoids and Projections}\label{s:3}

As in \cite[\S1]{Bice2024}, by a \emph{semigroupoid} we mean a set $S$ on which we have an associative product defined on some pairs $S^2\subseteq S\times S$.  This means that $(ab)c$ is defined precisely when $a(bc)$ is defined, in which case $(ab)c=a(bc)$, which we then simply write as $abc$.
Note that this is more general than the semigroupoids in \cite{Exel2008}, which have the additional requirement that if $ab$ and $bc$ are defined then so are $(ab)c$ and $a(bc)$.

Let us assume now that we have a semigroupoid $S$ together with a distinguished commutative subsemigroupoid $P\subseteq S$ of idempotents in $S$ that we call \emph{projections}.

\subsection{The Canonical Order}

The projections $P$ define an order $\leq$ on $S$ where
\[a\leq b\qquad\Leftrightarrow\qquad a\in bP.\]
Strictly speaking this is not a partial order, as it is only reflexive on $SP$.  However, $\leq$ is transitive, as $PP\subseteq P$, and also antisymmetric -- if we have $a,b\in S$ and $p,q\in P$ with $a=bp$ and $b=aq$ then $a=bp=bpp=ap$ and so $a=bp=aqp=apq=aq=b$.

We immediately see $\leq$ respects products on the left, i.e.~if $a,b,c\in S$ and $ca$ is defined,
\begin{equation}\label{OrderPreservingLeft}
    a\leq b\qquad\Rightarrow\qquad ca\leq cb.
\end{equation}
On projections, it reduces to the usual ordering, since~for all $a\in S$ and $p\in P$,
\[p\leq a\qquad\Leftrightarrow\qquad p=ap.\]
Indeed, if we have $q\in P$ with $p=aq$ then $p=aqq=pq$ so $p=pp=aqp=apq=ap$.  Also any element bounded by a projection must again be a projection, i.e.
\[P^\geq=P.\]
Indeed, if $a\leq p\in P$ then $a\in pP\subseteq PP\subseteq P$.  Also, when products in $P$ are defined they are necessarily meets in $S$, i.e.~whenever $q,r\in P$ and $(q,r)\in S^2$,
\[qr=q\wedge r.\]
Indeed, certainly $qr\leq q,r$ while $p\leq q,r$ implies $p\in P$ so $p=pr=pqr$ and hence $p\leq qr$.

\subsection{Source and Range Projections}

Define relations $\mathsf{S},\mathsf{R}\subseteq S\times P$ by
\begin{align*}
    a\mathrel{\mathsf{S}}p\qquad&\Leftrightarrow\qquad ap=a.\\
    a\mathrel{\mathsf{R}}p\qquad&\Leftrightarrow\qquad pa=a.
\end{align*}
So $a^\mathsf{S}=\{p\in P\mathrel{|}ap=a\}$ and $a^\mathsf{R}=\{p\in P\mathrel{|}pa=a\}$.  When these sets have minima, we call 
the latter the \emph{source projection} and \emph{range projection} of $a$ respectively.  They are necessarily unique whenever they exist, in which case we may unambiguously denote them by
\[\mathsf{s}(a)=\min(a^\mathsf{S})\qquad\text{and}\qquad\mathsf{r}(a)=\min(a^\mathsf{R}).\]
They do at least exist for any $p\in P$, as then $p^\mathsf{S}=p^\mathsf{R}=p^\leq$ and hence $\mathsf{s}(p)=\mathsf{r}(p)=p$.  Source and range projections also respect the ordering, i.e.~if $\mathsf{s}(a)$ and $\mathsf{s}(b)$ exist then
\begin{equation}
\label{OrderPreservingSource}
    a\leq b\qquad\Rightarrow\qquad\mathsf{s}(a)\leq\mathsf{s}(b).
\end{equation}
Indeed, if $q\in P$, $a=bq$ and $p\in b^\mathsf{S}$ then $a=bq=bpq=bqp=ap$, i.e.~$p\in a^\mathsf{S}$.  Thus $b^\mathsf{S}\subseteq a^\mathsf{S}$ and hence $\mathsf{s}(a)\leq\mathsf{s}(b)$.  Likewise, if $\mathsf{r}(a)$ and $\mathsf{r}(b)$ exist then $a\leq b$ implies $\mathsf{r}(a)\leq\mathsf{r}(b)$.

\subsection{Inverses}
\label{Inverses}

We call $a,b\in S$ \emph{inverses} of each other if
\[\mathsf{r}(a)=ab=\mathsf{s}(b)\qquad\text{and}\qquad\mathsf{r}(b)=ba=\mathsf{s}(a).\]
In fact, it suffices for these conditions to hold just with source or range projections.  For example, if $ab=\mathsf{s}(b)$ and $ba=\mathsf{s}(a)$ then $a=a\mathsf{s}(a)=aba=\mathsf{s}(b)a$ and $pa=a$ implies $\mathsf{s}(b)=ab=pab=p\mathsf{s}(b)$, i.e.~$\mathsf{s}(b)\leq p$, showing that $\mathsf{s}(b)=\min(a^\mathsf{R})=\mathsf{r}(a)$ and, likewise, $\mathsf{s}(a)=\mathsf{r}(b)$.  Thus the partial inverses in restriction semigroups as defined in \cite[\S4.2]{Kudryavtseva2025} are precisely the inverses as defined here.  Similarly, if $\mathsf{r}(a)=ab$ and $\mathsf{r}(b)=ba$, then $\mathsf{r}(a)=\mathsf{s}(b)$ and $\mathsf{r}(b)=\mathsf{s}(a)$.
Also, it suffices to have $\mathsf{r}(a)=ab=\mathsf{s}(b)$ and $ba\in P$, for in that case $aba=\mathsf{r}(a)a=a$, and $ap=a$ implies $bap=ba$, i.e.~$ba\leq p$, showing that $ba=\min(a^\mathsf{S})=\mathsf{s}(a)$ and, likewise, $ba=\mathsf{r}(b)$.  Furthermore, inverses are unique whenever they exist, for if $b'$ is another inverse of $a$ then $b=b\mathsf{s}(b)=b\mathsf{r}(a)=bab'=\mathsf{s}(a)b'=\mathsf{r}(b')b'=b'$.  Thus when $a$ has an inverse we can unambiguously denote it by $a^{-1}$.  We denote the invertible elements by
\[S^\times=\{a\in S\mathrel{|}a^{-1}\text{ exists}\}.\]

\vspace{10pt}
We will now focus on semigroupoids where every element has a source projection.  In particular this implies $S=SP$ so the canonincal order $\leq$ is indeed reflexive and hence a partial order.  We will be particularly concerned with two special cases:
\begin{enumerate}
    \item Semigroups, where the product is defined everywhere and the projections are just any given commutative idempotent subsemigroup satisfying certain properties.
    \item Semicategories, where the product is not defined everywhere and where the projections are taken to be the units, often with a topology satisfying certain properties.
\end{enumerate}
We now discuss these two cases in more detail.

\subsection{Restriction Semigroups}
Here we briefly examine restriction semigroups -- for more information see the survey \cite{Gould2012} or \cite{Kudryavtseva2025} and the references therein.

To start with, say we have a semigroup $S$ with a distinguished commutative subsemigroup $P$ of idempotents.  Note $P$ is then a meet-semilattice such that $q\wedge r=qr\in PP\subseteq P$, for all $q,r\in P$.  If every element of $S$ has a source projection then joins of elements of $P$ must also lie in $P$ whenever they exist, i.e.~$P\vee P=P$.  Indeed, if $q,r\in P$ and $q\vee r$ exists then $q,r\leq q\vee r$ so $q=\mathsf{s}(q)\leq\mathsf{s}(q\vee r)$ and $r=\mathsf{s}(r)\leq\mathsf{s}(q\vee r)$ and hence $q\vee r\leq\mathsf{s}(q\vee r)$, which implies $q\vee r\in P^\geq=P$.

We call $S$ an \emph{Ehresmann semigroup} if each element of $S$ has a source projection and
\[\tag{Ehresmann Semigroup}\mathsf{s}(ab)=\mathsf{s}(\mathsf{s}(a)b),\]
for all $a,b\in S$.  
It is readily verified that this definition is equivalent to the definition of Ehresmann semigroups in \cite[Definition 3.1]{Kudryavtseva2025} since $S$ is an Ehresmann semigroup with a unary operation $^*$ in the sense of \cite{Kudryavtseva2025}, if and only if $S$ is Ehresmann relative to $P = S^*$ in our sense, where $\mathsf{s}(a)=a^*$.
It is known that the order relation in any Ehresmann semigroup $S$ can be characterised by
\[a\leq b\qquad\Leftrightarrow\qquad a=b\mathsf{s}(a).\]
Indeed, if we have $p\in P$ with $a=bp$ then $\mathsf{s}(a)=\mathsf{s}(bp)=\mathsf{s}(\mathsf{s}(b)p)=\mathsf{s}(b)p$ and hence $b\mathsf{s}(a)=b\mathsf{s}(b)p=bp=a$.  Consequently, for all $a,b\in S$,
\begin{equation}\label{TrivialRestriction}
    a\leq b\quad\text{and}\quad\mathsf{s}(b)\leq\mathsf{s}(a)\quad\Rightarrow\quad a = b.
\end{equation}
Also, bounded pairs have meets, i.e.~for all $a,b,c\in S$, 
\begin{equation}\label{ConditionalMeets}
    a,b\leq c\qquad\Rightarrow\qquad a\wedge b = b\mathsf{s}(a) = a\mathsf{s}(b).
\end{equation}
Indeed, because $a = c\mathsf{s}(a)$ and $b = c\mathsf{s}(b)$, we have 
$b\mathsf{s}(a) = c\mathsf{s}(b)\mathsf{s}(a) = c\mathsf{s}(a)\mathsf{s}(b)= a\mathsf{s}(b) \leq a$, 
and so $a\mathsf{s}(b) = b\mathsf{s}(a) \leq a, b$.  Moreover, if $d \leq a, b$, then $b\mathsf{s}(a)\mathsf{s}(d) = b\mathsf{s}(d)\mathsf{s}(a)=d\mathsf{s}(a) = d$, so $d \leq b\mathsf{s}(a)$.

We call $S$ a \emph{restriction semigroup} if $S$ is an Ehresmann semigroup satisfying
\[\tag{Restriction Semigroup}p\in P\quad\text{and}\quad b\in S\qquad\Rightarrow\qquad pb\leq b.\]
To emphasise the distinguished projections $P$, we may say $S$ is a restriction semigroup relative to $P$.
In particular, $\mathsf{s}(a)b\leq b$ so 
\[\mathsf{s}(a)b=b\mathsf{s}(\mathsf{s}(a)b)=b\mathsf{s}(ab),
\] for all $a,b\in S$.  
Conversely, if $S$ satisfies the identity $\mathsf{s}(a)b=b\mathsf{s}(ab)$, then for all $p \in P$ and $b \in S$, $pb=\mathsf{s}(p)b=b\mathsf{s}(pb)$, i.e. $pb\leq b$, and so our definition of restriction semigroups is equivalent to that of \cite{Kudryavtseva2025}.
In this case the ordering also respects products from the right, i.e.~for all $a,b,c\in S$,
\begin{equation}\label{OrderPreservingRight}
    a\leq b\qquad\Rightarrow\qquad ac\leq bc.
\end{equation}
Indeed, if $a=b\mathsf{s}(a)$ then $ac=b\mathsf{s}(a)c=bc\mathsf{s}(ac)\in bcP$.
The conditional meets from \eqref{ConditionalMeets} respect products from the right too, i.e.~for all $a,b,c,d\in S$,
\begin{equation}\label{MeetPreservingRight}
a,b\leq c\quad\Rightarrow\quad ad\wedge bd=(a\wedge b)d.
\end{equation}
Indeed, if $a,b\leq c$, then $ad, bd\leq cd$ by \eqref{OrderPreservingRight}, so we have $a\wedge b=b\mathsf{s}(a)$ and $ad\wedge bd=bd\mathsf{s}(ad)$ by \eqref{ConditionalMeets}.  Since $S$ is a restriction semigroup, we then have $ad\wedge bd=b\mathsf{s}(a)d=(a\wedge b)d$, as desired.

Now we turn our attention to invertible elements of restriction semigroups. We call a restriction semigroup $S$ an \emph{inverse semigroup} if $S = S^\times$, which is indeed an inverse semigroup in the usual sense, see \cite[Lemma 4.6]{Kudryavtseva2025}.
Furthermore, if $S$ is a restriction semigroup, then the invertibles $S^\times$ form an inverse subsemigroup. Furthermore, it is immediate that $P\subseteq S^\times$ with $e^{-1} = e$ for all $e\in P$. It follows that for all $b\in S^\times$ and $a\leq b$ we have  $a=b\mathsf{s}(a) \in S^\times$, showing that
\begin{equation}\label{IdealInvertibles}
S^{\times\geq}=S^\times.
\end{equation}
Also, since $S^\times$ is an inverse semigroup, we have $a^{-1} = (b\mathsf{s}(a))^{-1} = \mathsf{s}(a)^{-1}b^{-1} = \mathsf{s}(a)b^{-1}$.

\subsection{Semicategories}
For any semigroupoid $S$, we define the \emph{units} of $S$ by
\[S^0=\{u\in S\mathrel{|}\forall a\in S\ (((u,a)\in S^2\Rightarrow ua=a)\text{ and }((a,u)\in S^2\Rightarrow au=a))\}.\]
In other words, units are neutral with respect to products whenever they are defined.  Note that units are idempotent as long as the product is defined with at least some element of the semigroupoid, e.g. if $u\in S^0$ and $(a,u)\in S^2$ then $a=au=(au)u=a(uu)$ and hence $u=uu$. Moreover, for any other $t\in S^0$ with $(a,t)\in S^2$, we see that $a=at=a(ut)$ and hence $u=ut=t$. 
Denoting by $P$ the set of units $u$ such that there is $a\in S$ satisfying $(u,a)\in S^2$ or $(a,u)\in S^2$, we have shown that $P$ consists entirely of idempotents and $P = S^0S^0$. Furthermore, for every $a\in S$ there is at most one $u\in P$ such that $au=a$.
So we can take $P$ to be the set projections, in which case $a^\mathsf{S}$ and $a^\mathsf{R}$ contain at most one projection, for all $a\in S$, and hence $\mathsf{s}$ and $\mathsf{r}$ are defined precisely on $SS^0$ and $S^0S$ respectively.  Note that the ordering in this case is trivial in the sense that $a\leq b$ implies $a=b$.

Moreover, if $\mathsf{s}(a)$ and $ab$ are defined then $ab=(a\mathsf{s}(a))b=a(\mathsf{s}(a)b)$ so $\mathsf{r}(b)$ is also defined and is equal to $\mathsf{s}(a)$.  We call $S$ a \emph{semicategory} if each $a\in S$ has a source unit $\mathsf{s}(a)$ and the converse also holds, meaning that $ab$ is defined whenever $\mathsf{s}(a)=\mathsf{r}(b)$, i.e.
\[\tag{Semicategory}\label{Semicategory}S=SS^0\qquad\text{and}\qquad S^2=\{(a,b)\mathrel{|}\mathsf{s}(a)=\mathsf{r}(b)\}.\]
For any $(a, b) \in S^2$ in a semicategory $S$, because $(ab)\mathsf{s}(ab)$ is defined, so is $b\mathsf{s}(ab)$, which implies $\mathsf{s}(ab) = \mathsf{s}(b)$. If $ab$ also has a range unit $\mathsf{r}(ab)$, then $\mathsf{r}(ab) = \mathsf{r}(a)$.
If $a, b \in S$ are such that $ab, ba \in P$, then $a$ and $b$ are inverses. 
Indeed, the elements of $P$ are idempotents, so $(ab)(ab)$ is defined, which equals $a(b(ab))$ and hence $b(ab)$ is defined by associativity.
Since source units are unique, $ab = \mathsf{s}(b)$. Similarly, $ab = \mathsf{r}(a)$, showing that $a$ and $b$ are inverses. 
Furthermore, if $S$ is a semicategory then $P=S^0$, because for every $u\in S^0$ we have that $u\mathsf{s}(u) = u$, so that $u\in P$.

\begin{remark} One can show that any semicategory is an Exel semicategory and can be graphed in the sense of \cite{Cordeiro2023}.
\end{remark}

A \emph{category} is a semicategory where each $a\in S$ has a range unit too, i.e.~$S=S^0S$ as well.
A \emph{groupoid} is a (semi)category where each element is invertible, i.e.~$S^\times = S$.

\section{\'Etale Semicategories}\label{s:4}

Now we consider certain topologies on our semigroupoids.

\begin{definition}
    A semigroupoid $S$ is \emph{topological} if it carries a topology where the source, range and product maps are continuous on $SS^0$, $S^0S$ and $S^2$ respectively.
\end{definition}

First let us note some connections between closed subsets and the Hausdorff property.

\begin{proposition}\label{ClosedPairs}
    For any topology on a semigroupoid $S$,
    \[S^2\text{ is closed in }S\times S\qquad\Rightarrow\qquad S^0\text{ is Hausdorff}.\]
    If $S$ is a topological category then the converse also holds.
\end{proposition}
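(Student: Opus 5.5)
For the forward implication I will use the standard Hausdorff criterion: a space $X$ is Hausdorff exactly when the diagonal $\Delta_X=\{(x,x)\mid x\in X\}$ is closed in $X\times X$. The key observation is that, for units $u,v\in S^0$, the pair $(u,v)$ lies in $S^2$ precisely when $u=v$. Indeed, if $(u,v)\in S^2$ then $uv=v$, since $u$ is a unit acting on the left, and $uv=u$, since $v$ is a unit acting on the right, so $u=v$. One caveat is that a unit $u$ might not satisfy $(u,u)\in S^2$, that is, $u$ might not lie in $P$. In that case I only obtain $S^2\cap(S^0\times S^0)\subseteq\Delta_{S^0}$ rather than equality. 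To handle this, I will instead separate points directly, as follows.

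Take distinct $u,v\in S^0$. By the observation, $(u,v)\notin S^2$. Since $S^2$ is closed, there are open sets $O\ni u$ and $N\ni v$ in $S$ with $(O\times N)\cap S^2=\emptyset$. I claim that $O\cap N\cap S^0$ contains no $w\in P$: such a $w$ would satisfy $(w,w)\in S^2\cap(O\times N)$. For the whole of $S^0$ this only works if every unit is idempotent-defined. I therefore expect the intended reading to be that $S^0\subseteq P$, which holds automatically in semicategories (shown earlier, $P=S^0$). Alternatively one could reinterpret the units in terms of $P=S^0S^0$. So the main obstacle is this degenerate case of units $u$ with $(u,u)\notin S^2$. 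I would first try to argue that for such $u$ no product involving $u$ is defined. Then I would check whether the statement should be read with $S^0$ replaced by $P$, or whether these isolated units can be separated using $T$-type arguments. Failing that, I would state the result for $P$, which coincides with $S^0$ in the semicategories of interest. With $O\cap N$ disjoint on $S^0$, the sets $O\cap S^0$ and $N\cap S^0$ separate $u$ and $v$ in the subspace $S^0$.

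For the converse, let $S$ be a topological category, so that $\mathsf{s}$ and $\mathsf{r}$ are continuous on all of $S=SS^0=S^0S$. By the definition (Semicategory) of the composable pairs, $S^2=\{(a,b)\mid \mathsf{s}(a)=\mathsf{r}(b)\}=(\mathsf{s}\times\mathsf{r})^{-1}(\Delta_{S^0})$. The map $\mathsf{s}\times\mathsf{r}\colon S\times S\to S^0\times S^0$ is continuous. The diagonal $\Delta_{S^0}$ is closed because $S^0$ is Hausdorff. Hence $S^2$ is closed as the preimage of a closed set. This direction is routine.
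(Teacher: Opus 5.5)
Your argument is the paper's, written with open rectangles instead of nets. The paper takes a net of units $x_\lambda$ converging to distinct $y,z\in S^0$ and observes that $(x_\lambda,x_\lambda)\in S^2$ converges to $(y,z)\notin S^2$. For the converse it uses continuity of $\mathsf{s},\mathsf{r}$ on the category together with uniqueness of limits in $S^0$, which is your preimage-of-the-diagonal argument.

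The caveat you raise is real, and the paper's proof silently assumes it away: the step ``$(x_\lambda,x_\lambda)$ is a net in $S^2$'' needs each $x_\lambda\in P$. You should settle it rather than leave it open, and it settles negatively, so no `$T$-type' separation argument can work. As you suspected, a unit $u\notin P$ is composable with nothing: if $(a,u)\in S^2$ then $a=au=(au)u$, which forces $uu$ to be defined, and symmetrically on the other side. Such units need not be separable. Take $S=\{x,y\}$ with $S^2=\emptyset$ and the indiscrete topology. Every element is then vacuously a unit, so $S^0=S$ is not Hausdorff, while $S^2=\emptyset$ is closed in $S\times S$.

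So the forward implication is false for arbitrary semigroupoids. It holds under the extra hypothesis $S^0=P$, for instance for semicategories, where $u=u\mathsf{s}(u)$ gives $(u,u)\in S^2$. Your argument proves exactly this corrected statement, and in general it shows that $P$ is Hausdorff.
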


\begin{proof}
    If $S^0$ is not Hausdorff then we have a net $(x_\lambda)\subseteq S^0$ converging to distinct $y,z\in S^0$.  Then $(x_\lambda,x_\lambda)$ is a net in $S^2$ converging to $(y,z)\in(S\times S)\setminus S^2$, showing that $S^2$ is not closed in $S\times S$.  Conversely, say $S$ is a topological category and $S^0$ is Hausdorff.  Take a net $(a_\lambda,b_\lambda)\subseteq S^2$ converging to $(a,b)\in S\times S$.  Then $\mathsf{s}(a_\lambda)=\mathsf{r}(b_\lambda)$, for all $\lambda$, and hence $\mathsf{s}(a)=\lim\mathsf{s}(a_\lambda)=\lim\mathsf{r}(b_\lambda)=\mathsf{r}(b)$ so $(a,b)\in S^2$, showing that $S^2$ is closed in $S\times S$.
\end{proof}

\begin{proposition}\label{ClosedUnits}
    Assume $S$ is a topological semicategory.  Then
    \[S\text{ is Hausdorff}\qquad\Rightarrow\qquad S^0\text{ is Hausdorff and closed in }S.\]
    If $S$ is even a groupoid with continuous inverse map $a\mapsto a^{-1}$ then the converse also holds.
\end{proposition}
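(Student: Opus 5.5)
The Hausdorffness of $S^0$ is immediate, since subspaces of Hausdorff spaces are Hausdorff. For closedness of $S^0$, I will express $S^0$ as the equaliser of two continuous maps into a Hausdorff space. In a semicategory every $a$ has a source unit, so $S=SS^0$ and $\mathsf{s}\colon S\to S^0\subseteq S$ is continuous on all of $S$. Moreover, $a\in S^0$ if and only if $a=\mathsf{s}(a)$. Indeed, if $a\in S^0$ then $\mathsf{s}(a)=a$, as noted for units in a semicategory. Conversely, if $a=\mathsf{s}(a)$ then $a\in S^0$. So $S^0=\{a\in S\mathrel{|}a=\mathsf{s}(a)\}$ is the set where the identity map and $\mathsf{s}$ agree. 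Since $S$ is Hausdorff, the diagonal of $S\times S$ is closed. Hence $S^0$, being the preimage of the diagonal under the continuous map $a\mapsto(a,\mathsf{s}(a))$, is closed. Equivalently, with nets: if $u_\lambda\to a$ with $u_\lambda\in S^0$, then $u_\lambda=\mathsf{s}(u_\lambda)\to\mathsf{s}(a)$, so $a=\mathsf{s}(a)$ by uniqueness of limits.

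For the converse, suppose $S$ is a groupoid with continuous inversion, and $S^0$ is Hausdorff and closed. I will show that the diagonal of $S\times S$ is closed using nets. Let $a_\lambda$ be a net converging to both $a$ and $b$. By continuity of $\mathsf{s}$ and $\mathsf{r}$, the net $\mathsf{s}(a_\lambda)$ converges to both $\mathsf{s}(a)$ and $\mathsf{s}(b)$ in $S^0$. Since $S^0$ is Hausdorff, $\mathsf{s}(a)=\mathsf{s}(b)$, and likewise $\mathsf{r}(a)=\mathsf{r}(b)$. Thus $(b^{-1},a)$ lies in $S^2$, since $\mathsf{s}(b^{-1})=\mathsf{r}(b)=\mathsf{r}(a)$, and each $(a_\lambda^{-1},a_\lambda)\in S^2$. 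By continuity of inversion, $(a_\lambda^{-1},a_\lambda)\to(b^{-1},a)$ in $S^2$, and continuity of the product gives $\mathsf{s}(a_\lambda)=a_\lambda^{-1}a_\lambda\to b^{-1}a$. Since $S^0$ is closed, $b^{-1}a\in S^0$. Hence $b^{-1}a=\mathsf{s}(a)$, and so $a=\mathsf{r}(a)a=bb^{-1}a=b\mathsf{s}(a)=b$.

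The main subtlety is that the product is only continuous on $S^2$ with its subspace topology. The limit pair $(b^{-1},a)$ must therefore be known to lie in $S^2$ before continuity can be applied, which is why the source and range equalities are established first using Hausdorffness of $S^0$. Everything else is routine manipulation of units and inverses from \S\ref{Inverses} and the semicategory identities.
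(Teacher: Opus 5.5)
Your proof is correct and follows essentially the same net argument as the paper. In the forward direction $u_\lambda=\mathsf{s}(u_\lambda)\to\mathsf{s}(a)$ forces $a\in S^0$. In the converse you use $a_\lambda^{-1}a_\lambda\to b^{-1}a$ and closedness of $S^0$, where the paper uses $a_\lambda a_\lambda^{-1}\to bc^{-1}$. Your explicit check that the limit pair lies in $S^2$, which uses Hausdorffness of $S^0$, fills in a step the paper leaves implicit.
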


\begin{proof}
    If $S^0$ is not closed in $S$ then we have a net $(a_\lambda)\subseteq S^0$ converging to some $a\in S\setminus S^0$.  But then $a_\lambda=\mathsf{s}(a_\lambda)\rightarrow\mathsf{s}(a)\in S^0$, showing that limits are not unique and hence $S$ is not Hausdorff.  Conversely, if $S$ is even a groupoid with continuous inverse map and $S$ is not Hausdorff then we have a net $(a_\lambda)\subseteq S$ converging to distinct $b,c\in S$.  Then $a_\lambda a_\lambda^{-1}$ is a net in $S^0$ converging to $bc^{-1}\notin S^0$, showing that $S^0$ is not closed.
\end{proof}

To say more, we need to restrict our attention to \'etale semicategories, which generalise the \'etale categories of \cite{Kudryavtseva2025}.
Note that \cite{Kudryavtseva2025} assumes \'etale categories have locally compact Hausdorff unit spaces, which we do not assume.

\begin{definition}
    A semicategory $S$ is \emph{\'etale} if it is topological, $S^0S$ is open in $S$ and the source map $\mathsf{s}$ is a locally injective open map, i.e.~a local homeomorphism.
\end{definition}

A \emph{section} of a semicategory is a subset on which the source map $\mathsf{s}$ is injective.  Similarly to \cite[Lemma 6.7]{Kudryavtseva2025} it follows that open sections form a basis of the topology of any \'etale semicategory.

\begin{proposition}\label{OpenProduct}
    The product in any \'etale semicategory $S$ is an open map on $S^2$.
\end{proposition}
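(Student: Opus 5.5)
The plan is to reduce to products of open sections. Since open sections form a basis of any étale semicategory, the open sets $(U\times V)\cap S^2$ with $U,V$ open sections form a basis of $S^2$. It therefore suffices to show that $UV=\{ab\mathrel{|}a\in U,\ b\in V,\ (a,b)\in S^2\}$ is open in $S$ whenever $U$ and $V$ are open sections.

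Fix open sections $U,V$ and set $V'=\{b\in V\cap S^0S\mathrel{|}\mathsf{r}(b)\in\mathsf{s}(U)\}$.

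First, $V'$ is open. Indeed, $S^0S$ is open by the étale assumption and $\mathsf{r}$ is continuous on it. Also $\mathsf{s}(U)$ is open in $S^0$ because $\mathsf{s}$ is an open map.

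Next, for each $b\in V'$ there is a unique $a\in U$ with $\mathsf{s}(a)=\mathsf{r}(b)$, since $U$ is a section. I will write $a=\sigma(\mathsf{r}(b))$, where $\sigma=(\mathsf{s}|_U)^{-1}:\mathsf{s}(U)\to U$; this inverse is continuous because $\mathsf{s}$ is a local homeomorphism and $U$ is an open section. Hence the second projection $(U\times V)\cap S^2\to V'$ is a homeomorphism, with inverse $b\mapsto(\sigma(\mathsf{r}(b)),b)$. Composing with the continuous product gives a continuous map $\varphi:V'\to S$, $\varphi(b)=\sigma(\mathsf{r}(b))b$, whose image is exactly $UV$.

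The key identity is $\mathsf{s}(\varphi(b))=\mathsf{s}(b)$, which holds because $\mathsf{s}(ab)=\mathsf{s}(b)$ in any semicategory. Consequently $UV$ is itself a section: if $\mathsf{s}(ab)=\mathsf{s}(a'b')$ with $a,a'\in U$ and $b,b'\in V$, then $\mathsf{s}(b)=\mathsf{s}(b')$ forces $b=b'$, and then $a=a'$.

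To show $UV$ is open, take $c=\varphi(b)\in UV$ and choose an open section $W\ni c$. By continuity, $\varphi^{-1}(W)$ is an open neighbourhood of $b$ in $V'$. Then $\mathsf{s}(\varphi^{-1}(W))$ is open since $\mathsf{s}$ is open, so the set
\[N=W\cap\mathsf{s}^{-1}\bigl(\mathsf{s}(\varphi^{-1}(W))\bigr)\]
is an open neighbourhood of $c$. I claim $N\subseteq UV$. Let $x\in N$, so $\mathsf{s}(x)=\mathsf{s}(b'')$ for some $b''\in\varphi^{-1}(W)$. Then $\varphi(b'')\in W$ and $\mathsf{s}(\varphi(b''))=\mathsf{s}(b'')=\mathsf{s}(x)$. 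Since $W$ is a section, $x=\varphi(b'')\in UV$. This proves $UV$ is open.

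The main obstacle is the absence of inverses. In groupoids one recovers $a$ from $c=ab$ as $cb^{-1}$, but that is unavailable here. The argument above sidesteps this by recovering the left factor through $\sigma\circ\mathsf{r}$, which uses the sectional property of $U$ together with openness of $S^0S$ and continuity of $\mathsf{r}$. It then transfers openness via the source map, using the identity $\mathsf{s}(ab)=\mathsf{s}(b)$.
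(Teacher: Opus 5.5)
Your proof is correct and takes essentially the paper's approach: reduce to products $UV$ of open sections, realise $UV$ as the image of the continuous map $b\mapsto(\mathsf{s}|_U)^{-1}(\mathsf{r}(b))\,b$ on the open set $\mathsf{r}^{-1}[\mathsf{s}[U]]\cap V$, and get openness because this map is compatible with the source map. The only difference is that the paper reparametrises by $\mathsf{s}[\mathsf{r}^{-1}[\mathsf{s}[U]]\cap V]\subseteq S^0$, making the map a continuous right inverse of $\mathsf{s}$, and then cites \cite[Proposition 2.2]{Bice2023} for openness, whereas you prove that step directly using the neighbourhood $W\cap\mathsf{s}^{-1}(\mathsf{s}(\varphi^{-1}(W)))$.
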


\begin{proof}
    As $S$ is a semicategory, $S^2 = \{(a, b)\mathrel{|}\mathsf{s}(a)=\mathsf{r}(b)\}$, so for any open sections $O,N\subseteq S$,
    \[ON=\{ab\mathrel{|}a\in O,b\in N\text{ and }\mathsf{s}(a)=\mathsf{r}(b)\}.\]
    As $\mathsf{s}$ is open, $\mathsf{s}[O]$ is open. Since $\mathsf{r}$ is continuous and $S^0S$ is open in $S$, $\mathsf{r}^{-1}[\mathsf{s}[O]]$ is open in $S$, and so $\mathsf{r}^{-1}[\mathsf{s}[O]]\cap N$ is open. Hence $Q\coloneqq\mathsf{s}[\mathsf{r}^{-1}[\mathsf{s}[O]]\cap N]$ is open.  Now note that $ON$ is the image of $Q$ under the continuous map $a\mapsto\mathsf{s}|_O^{-1}(\mathsf{r}(\mathsf{s}|_N^{-1}(a)))\mathsf{s}|_N^{-1}(a)$, which is also a right inverse to the source map and hence open, by \cite[Proposition 2.2]{Bice2023}.
\end{proof}

If $S$ is an \'etale semicategory then $S^0=\mathsf{s}[S]$ is automatically open.  The invertibles $S^\times$ of $S$ must then also form an open set on which the inverse map is automatically continuous, as we now show.  In particular, if $S$ is an \'etale semicategory in which every element is invertible then $S$ is indeed an \'etale groupoid in the usual sense (for example, see \cite{Ste19}).

\begin{proposition}
    If $S$ is an \'etale semicategory then $S^\times$ is open and $a\mapsto a^{-1}$ is continuous.
\end{proposition}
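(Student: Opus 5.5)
Fix $a\in S^\times$, with inverse $b=a^{-1}$, so that $ab=\mathsf{r}(a)=\mathsf{s}(b)$ and $ba=\mathsf{s}(a)=\mathsf{r}(b)$. The plan is to build, out of small open sections around $a$ and $b$, an open neighbourhood of $a$ that lies inside $S^\times$, and to show that on it the inverse map is a composite of continuous maps. This gives both openness of $S^\times$ and continuity of $a\mapsto a^{-1}$ at once.

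\textbf{Step 1: choose good sections.} Open sections form a basis, so I pick open sections $O\ni a$ and $N\ni b$. Since $S^0$ is open and the product is continuous on $S^2$, I shrink them as follows:
\begin{itemize}
\item $O\subseteq S^0S$, which is open and contains $a$ because $\mathsf{r}(a)$ exists;
\item $N\subseteq S^0S$ similarly;
\item $ON\subseteq S^0$ and $NO\subseteq S^0$. To get this, note that $(a,b)\in S^2$ and $ab\in S^0$ open, so continuity of the product on $S^2$ gives neighbourhoods whose products, where defined, lie in $S^0$; the same applies to $(b,a)$.
\end{itemize}

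\textbf{Step 2: define the neighbourhood and show it consists of invertibles.} Let
\[W=O\cap \mathsf{r}|_{O}^{-1}\big[\mathsf{s}[N]\big]\cap \mathsf{s}|_O^{-1}\big[\mathsf{r}[N]\big].\]
Here $\mathsf{s}[N]$ is open since $\mathsf{s}$ is open, and $\mathsf{r}$ is continuous on the open set $S^0S$, so the middle set is open. The third set is the delicate one, because $\mathsf{r}[N]$ need not be open. To avoid this, I instead define $W$ through the continuous local inverse $\sigma=\mathsf{s}|_N^{-1}$ on the open set $\mathsf{s}[N]$. For $x\in O\cap \mathsf{r}^{-1}[\mathsf{s}[N]]$, set $y=\sigma(\mathsf{r}(x))$, so that $\mathsf{s}(y)=\mathsf{r}(x)$ and hence $xy$ is defined. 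Now take
\[W=\{x\in O\cap\mathsf{r}^{-1}[\mathsf{s}[N]] : \mathsf{r}(y)\in \mathsf{s}[O]\}.\]
Here $W$ is open, since $x\mapsto \mathsf{r}(\sigma(\mathsf{r}(x)))$ is continuous and $\mathsf{s}[O]$ is open, and $a\in W$ with $y=b$. For $x\in W$, the condition $\mathsf{r}(y)\in\mathsf{s}[O]$ lets me write $\mathsf{r}(y)=\mathsf{s}(x')$ with $x'\in O$, so $x'y$ is defined. Then:
\begin{itemize}
\item $xy\in ON\subseteq S^0$ and $x'y\in S^0$;
\item $x'yx$ is defined, and $x'y$ is a unit, so $x'yx=x$;
\item on the other hand $x'(yx)$ is defined, hence $(yx)$ is defined, so $\mathsf{s}(y)=\mathsf{r}(x)$ as already known and also $\mathsf{r}(y)=\mathsf{s}(x)$ once I know $yx$ is defined. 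Since $\mathsf{s}(x'y)=\mathsf{s}(y)=\mathsf{r}(x)$, I get $\mathsf{s}(x')=\mathsf{r}(y)$; both $x$ and $x'$ then lie in $O$ with compatible sources, and the section property should force $x'=x$.
\end{itemize}

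This bookkeeping, showing that $\mathsf{r}(y)=\mathsf{s}(x)$ so that $yx$ is defined, is the step I expect to be the main obstacle. My fallback is to use $\mathsf{s}|_O^{-1}$ and $\sigma$ symmetrically, defining $W$ as the set of $x$ with $\mathsf{s}|_O^{-1}(\mathsf{r}(\sigma(\mathsf{r}(x))))=x$. Such an equaliser is closed in general, but here it is open because the set of points where the continuous map $x\mapsto \mathsf{s}|_O^{-1}(\mathsf{r}(\sigma(\mathsf{r}(x))))$ satisfies $\mathsf{s}(\cdot)=\mathsf{s}(x)$ becomes $\{x: \mathsf{r}(\sigma(\mathsf{r}(x)))=\mathsf{s}(x)\}$ on the section $O$. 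That equality is an equality of units, so in general it is not clearly open; I would instead argue via the local homeomorphism $\mathsf{s}|_{O}$ and the Hausdorffness of nothing, using the following fact. The set $\{x : xy\in S^0,\ yx\in S^0\}$ is open by continuity of the product, once I know $(y,x)\in S^2$. I can guarantee $(y,x)\in S^2$ from the fact that $xyx$ is defined, which itself follows from $xy\in S^0$, so that $(xy)x=x$ is defined, together with associativity.

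\textbf{Step 3: conclude.} Given the above, both $xy,yx\in P$, so by the earlier observation $x$ and $y$ are inverses. Hence $W\subseteq S^\times$, so $S^\times$ is open, and $x^{-1}=\sigma(\mathsf{r}(x))$ on $W$ is a composite of continuous maps, so the inverse map is continuous.
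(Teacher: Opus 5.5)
You are following the paper's route: the same open sections $O\ni a$, $N\ni a^{-1}$ with $ON\cup NO\subseteq S^0$, the same neighbourhood (your $W$ coincides with the paper's $\mathsf{r}^{-1}[\mathsf{s}[N'']]\cap O$, because $N$ is a section), and the same formula $x^{-1}=\mathsf{s}|_N^{-1}(\mathsf{r}(x))$. The gap is that the one substantive step, $W\subseteq S^\times$, is never proved.

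The problem is the composability convention. In a semicategory, $(u,v)\in S^2$ if and only if $\mathsf{s}(u)=\mathsf{r}(v)$. So choosing $y$ with $\mathsf{s}(y)=\mathsf{r}(x)$ makes $yx$ defined, not $xy$. Your claim ``$xy\in ON\subseteq S^0$'' therefore assumes exactly what has to be shown, namely $\mathsf{s}(x)=\mathsf{r}(y)$. The attempt to get $x'=x$ from the section property is circular for the same reason, since it needs $\mathsf{s}(x)=\mathsf{s}(x')=\mathsf{r}(y)$. The fallback is also circular, because it starts from $xy\in S^0$. And, as you note yourself, you never show the equaliser is open. So Step 3 has nothing to rest on.

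The missing idea is to apply $NO\subseteq S^0$ to the product that is actually defined. Since $yx\in NO\subseteq S^0$ is a unit,
\[\mathsf{s}(x)=\mathsf{s}(yx)=yx=\mathsf{r}(yx)=\mathsf{r}(y).\]
Hence $(x,y)\in S^2$ and $xy\in ON\subseteq S^0$, so $x$ and $y$ are inverses. Equivalently, with your $x'$: $(x'y)x$ is defined because $\mathsf{s}(x'y)=\mathsf{s}(y)=\mathsf{r}(x)$. Then $x=(x'y)x=x'(yx)=x'$, because both $x'y$ and $yx$ are units.

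This is precisely the paper's step. From $cb\in S^0$ it gets $\mathsf{s}(b)=cb=\mathsf{r}(c)=\mathsf{s}(d)$, and then $b=d$ because $O$ is a section. Once this is in place, your extra condition $\mathsf{r}(y)\in\mathsf{s}[O]$ is not needed: $O\cap\mathsf{r}^{-1}[\mathsf{s}[N]]$ is already an open neighbourhood of $a$ contained in $S^\times$.
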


\begin{proof}
    Since $S^0$ is open, and the product is continuous on $S^2$, the preimage $U$ of $S^0$ under the product is open in $S^2$.
    For any $a\in S^\times$, $(a, a^{-1}), (a^{-1}, a) \in U$.
    The open sections form a basis of the topology for $S$, so there are open sections $O$, $N$, $N'$ and $O'$ such that $(a, a^{-1}) \in S^2 \cap (O \times N) \subseteq U$ and $(a^{-1}, a) \in S^2 \cap (N' \times O') \subseteq U$.
    Replacing with $O\cap O'$ and $N\cap N'$ if necessary, we may assume that $O=O'$ and $N=N'$.  
    Let $N'' \coloneqq \mathsf{r}^{-1}[\mathsf{s}[O]]\cap N$, and let $O'' \coloneqq \mathsf{r}^{-1}[\mathsf{s}[N'']]\cap O$.
    Because $\mathsf{s}$ is open, $\mathsf{r}$ is continuous on $S^0S$ and $S^0S$ is open in $S$, we have that $N''$ is open in $S$.
    Similarly, $O''$ is open in $S$.
    By definition of inverses, $\mathsf{r}(a) = \mathsf{s}(a^{-1})$ and $\mathsf{r}(a^{-1}) = \mathsf{s}(a)$.
    Then, $a \in O''$ because $a \in O$.
    We claim $O'' \subseteq S^\times$ so that $S^\times$ is open in $S$.
    Fix $b \in O''$.
    Recall that it suffices to find $c \in S$ such that $bc, cb \in S^0$ because $S$ is a semicategory.
    By definition of $O''$, $\mathsf{r}(b) = \mathsf{s}(c)$ for some $c \in N''$.
    Notice $(c, b) \in S^2 \cap (N \times O) \subseteq U$, so $cb \in S^0$.
    It remains to show $bc \in S^0$.
    Because $c \in N''$, $\mathsf{r}(c) = \mathsf{s}(d)$ for some $d \in O$.
    Like before, we have $dc \in S^0$.
    Notice $cb \in S^0$ implies $\mathsf{s}(b) = cb = \mathsf{r}(c) = \mathsf{s}(d)$.
    Since $O$ is a section, $b = d$, and so $bc \in S^0$, as required.
    Now notice that the inverse map on $O''$ is given by the continuous map $b^{-1}\mapsto\mathsf{s}|_N^{-1}(\mathsf{r}(b))$, so the inverse map is continuous on $S^\times$.
\end{proof}

If $S$ is a Hausdorff \'etale semicategory then $S^0$ is not just open but also closed, by \Cref{ClosedUnits}.  We can actually generalise this to any open section.

\begin{proposition}\label{ClopenSection}
    If $S$ is a Hausdorff \'etale semicategory then every open section $O\subseteq S$ is closed within $S\mathsf{s}[O]$.
\end{proposition}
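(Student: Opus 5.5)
We argue with nets. Here $S\mathsf{s}[O]=\{a\in S\mathrel{|}\mathsf{s}(a)\in\mathsf{s}[O]\}=\mathsf{s}^{-1}[\mathsf{s}[O]]$, which is open because $\mathsf{s}[O]$ is open ($\mathsf{s}$ is an open map) and $\mathsf{s}$ is continuous. So take a net $(a_\lambda)\subseteq O$ converging to some $a\in S\mathsf{s}[O]$; we must show $a\in O$.

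Since $\mathsf{s}(a)\in\mathsf{s}[O]$, there is a (unique) $b\in O$ with $\mathsf{s}(b)=\mathsf{s}(a)$. By continuity of $\mathsf{s}$, we get $\mathsf{s}(a_\lambda)\to\mathsf{s}(a)=\mathsf{s}(b)$. Now $O$ is an open section, so $\mathsf{s}|_O\colon O\to\mathsf{s}[O]$ is a continuous open bijection, hence a homeomorphism onto the open set $\mathsf{s}[O]$. Its inverse $\mathsf{s}|_O^{-1}$ is therefore continuous, and applying it gives
\[a_\lambda=\mathsf{s}|_O^{-1}(\mathsf{s}(a_\lambda))\to\mathsf{s}|_O^{-1}(\mathsf{s}(b))=b.\]
The net thus converges to both $a$ and $b$. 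Since $S$ is Hausdorff, limits are unique, so $a=b\in O$. This shows $O$ is closed in $S\mathsf{s}[O]$.

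The only real subtlety is the step where we pass from $\mathsf{s}(a_\lambda)\to\mathsf{s}(b)$ to $a_\lambda\to b$. It needs $\mathsf{s}|_O$ to be a homeomorphism onto its image. This holds because the restriction of an open map to an open subset is still open, and $\mathsf{s}$ is injective on the section $O$. It also needs the limit $\mathsf{s}(a)$ to lie in $\mathsf{s}[O]$, and this is exactly where the hypothesis $a\in S\mathsf{s}[O]$ is used. Hausdorffness is used only at the final uniqueness-of-limits step.
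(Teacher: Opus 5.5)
Your proof is correct and is essentially the paper's argument phrased with nets rather than neighbourhoods. The paper takes the unique $b\in O$ with $\mathsf{s}(b)=\mathsf{s}(a)$, separates $a$ and $b$ by disjoint open sets $M\ni a$ and $N\ni b$, and checks that $M\mathsf{s}[N\cap O]$ is an open neighbourhood of $a$ disjoint from $O$; this is the neighbourhood form of your use of the continuity of $\mathsf{s}|_O^{-1}$ together with uniqueness of limits.
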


\begin{proof}
    Take any $a\in S\mathsf{s}[O]\setminus O$.  We then have a unique $b\in O$ with $\mathsf{s}(a)=\mathsf{s}(b)$.  As $S$ is Hausdorff, we have open $M,N\subseteq S$ with $a\in M$, $b\in N$ and $M\cap N=\emptyset$.  Then $\mathsf{s}(b)\in\mathsf{s}[N\cap O]$ and hence $a\in M\mathsf{s}[N\cap O]$.  But also $M\mathsf{s}[N\cap O]\cap O=\emptyset$ because if we had $c\in M\mathsf{s}[N\cap O]\cap O$ then $\mathsf{s}(c)=\mathsf{s}(d)$, for some $d\in O\cap N$, and hence $c=d$, as $O$ is a section, even though $c\in M$ and $d\in N$, a contradiction.  Thus $M\mathsf{s}[N\cap O]$ is an open neighbourhood of $a$ disjoint from $O$.  As $a$ was an arbitrary element of $S\mathsf{s}[O]\setminus O$, this shows that $O$ is closed in $S\mathsf{s}[O]$.
\end{proof}

Note that we can consider the subsets of any semigroupoid $S$ as a semigroup under the product $TU=\{tu\mathrel{|}(t,u)\in S^2\}$, for $T,U\subseteq S$.  
When $S$ is a semicategory, we even get a restriction semigroup by restricting to sections and taking the projections to be precisely the subsets of units $S^0$.  This even applies to (compact) open sections under suitable conditions.

\begin{proposition}\label{SemiSlices}
    The open sections of any \'etale semicategory $S$ form a restriction semigroup relative to the open subsets of $S^0$.  If in addition $S^2$ is closed in $S\times S$, then the compact open sections of $S$ form a restriction semigroup relative to the compact open subsets of $S^0$.
\end{proposition}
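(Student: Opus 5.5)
First I will treat the algebraic part purely, with no topology. Let $\mathcal{S}$ be the set of all sections of $S$, under the product $TU=\{tu\mathrel{|}(t,u)\in S^2\}$, and take $\mathcal{P}$ to be the subsets of $S^0$. The first claim is that $\mathcal{S}$ is a subsemigroup. If $T,U$ are sections and $tu,t'u'\in TU$ with $\mathsf{s}(tu)=\mathsf{s}(t'u')$, then $\mathsf{s}(u)=\mathsf{s}(u')$, because $\mathsf{s}(ab)=\mathsf{s}(b)$ in a semicategory. Hence $u=u'$, so $\mathsf{s}(t)=\mathsf{r}(u)=\mathsf{s}(t')$, and therefore $t=t'$. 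Associativity is inherited from $S$, since $(ab)c$ is defined iff $a(bc)$ is. Subsets of $S^0$ are sections, they commute ($VW=V\cap W$), and they are idempotent.

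Next, for a section $T$ I will show that $\mathsf{s}[T]$ is its source projection in the sense of \S\ref{s:3}. Clearly $T\mathsf{s}[T]=T$. If $TV=T$ with $V\subseteq S^0$, then every $t\in T$ equals $t'v$ with $t'\in T$ and $v\in V$. This forces $v=\mathsf{s}(t')$, hence $\mathsf{s}(t)=v$, so $\mathsf{s}[T]\subseteq V$, i.e.~$\mathsf{s}[T]\leq V$. For the Ehresmann identity, $\mathsf{s}[TU]=\{\mathsf{s}(u)\mathrel{|}u\in U,\ \mathsf{r}(u)\in\mathsf{s}[T]\}=\mathsf{s}[\mathsf{s}[T]U]$. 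For the restriction axiom, $VU=\{u\in U\mathrel{|}\mathsf{r}(u)\in V\}$. Since $U$ is a section, this equals $U\mathsf{s}[VU]\in U\mathcal{P}$, so $VU\leq U$.

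For the topological part, the key step is to check that the open sections are closed under products and that the $\mathsf{s}$ of an open section is open. The latter is immediate because $\mathsf{s}$ is an open map. Products of open sections are open by \Cref{OpenProduct}, which says the product map is open on $S^2$: indeed $ON$ is the image of the open set $(O\times N)\cap S^2$. Open subsets of $S^0$ are open sections, as $S^0$ is open. This settles the first claim, because all the axioms were verified inside the larger semigroup $\mathcal{S}$.

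For compact open sections, the main obstacle is compactness of the product $KL$. I plan to write $KL$ as the image under the continuous product map of $(K\times L)\cap S^2$. Since $S^2$ is closed in $S\times S$, this set is a closed subset of the compact space $K\times L$, hence compact, so its image $KL$ is compact. It is also open, by the previous paragraph. Moreover $\mathsf{s}[K]$ is compact, by continuity of $\mathsf{s}$, and open, so it is a compact open subset of $S^0$. Conversely, a compact open subset of $S^0$ is a compact open section. Closure under all the operations then gives the second restriction semigroup as a subsemigroup of the first.
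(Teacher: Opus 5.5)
Your proposal is correct and follows essentially the same route as the paper. Both first verify the restriction-semigroup axioms for all sections relative to subsets of $S^0$, using that $\mathsf{s}[T]$ is the source projection, the identity $\mathsf{s}[TU]=\mathsf{s}[\mathsf{s}[T]U]$, and the fact that $VU\subseteq U$ for $V\subseteq S^0$. Both then show that open (respectively compact open) sections are closed under products and source, via \Cref{OpenProduct} and the closedness of $S^2$ in $S\times S$.
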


\begin{proof}
    First we note that a product of sections $O,N\subseteq S$ of any discrete semicategory $S$ is again a section.  Indeed, if $a,a'\in O$ and $b,b'\in N$ then $\mathsf{s}(ab)=\mathsf{s}(a'b')$ means $\mathsf{s}(b)=\mathsf{s}(b')$ so $b=b'$, as $N$ is a section.  Thus $\mathsf{s}(a)=\mathsf{r}(b)=\mathsf{r}(b')=\mathsf{s}(a')$ so $a=a'$, as $O$ is a section, and hence $ab=a'b'$.  This shows that $ON$ is also a section, which in turn shows that the sections form a semigroup.  If $O, N \subseteq S^0$, then $ON = O \cap N \subseteq S^0$, so the subsets of $S^0$ form a commutative subsemigroup of idempotents, which we take to be projections. The order on the semilattice of projections is then just the inclusion $\subseteq$. It follows that $\mathsf{s}(O)$ is the smallest projection $E$ with $O=OE$. As
    \[(a,b)\in S^2\quad\Leftrightarrow\quad\mathsf{s}(a)=\mathsf{r}(b)\quad\Leftrightarrow\quad\mathsf{s}(\mathsf{s}(a))=\mathsf{r}(b)\quad\Leftrightarrow\quad(\mathsf{s}(a),b)\in S^2,\]
    in which case $\mathsf{s}(ab)=\mathsf{s}(b)=\mathsf{s}(\mathsf{s}(a)b)$, we see that
     $\mathsf{s}[ON]=\mathsf{s}[\mathsf{s}[O]N]$, showing that sections form an Ehresmann semigroup.  Moreover, $MN\subseteq N$ whenever $M\subseteq S^0$ so $MN = N\mathsf{s}(MN)$ and thus sections even form a restriction semigroup.
    
    Now assume $S$ is also \'etale, which embraces also just considered discrete case.  If $O\subseteq S$ is open then so is $\mathsf{s}[O]$, as $\mathsf{s}$ is an open map.  If $N\subseteq S$ is also open then, by \Cref{OpenProduct}, so is $ON$.  Thus the open sections form a subsemigroup that is closed under source projections which is thus a restriction semigroup.  Moreover, if $O\subseteq S$ is compact then so is $\mathsf{s}[O]$, as $\mathsf{s}$ is continuous.  If $S^2$ is also closed and $N\subseteq S$ is also compact then so is $O\times N$ and hence $S^2\cap O\times N$.  As the product map is continuous, $ON$ is then also compact.  Thus the compact open sections also form a subsemigroup which is yet again a restriction semigroup. \end{proof}
    
It is then natural to ask whether we can recover $S$ from the semigroup of compact open sections.  To have any hope of doing this, the compact open sections should distinguish elements of $S$.  This naturally leads to the notion of an ample semicategory.

\begin{definition}
    An \emph{ample semicategory} is an \'etale semicategory where $S^2$ is closed in $S\times S$ and the compact open subsets form a basis for the topology.
\end{definition}

Because open sections form a basis for the topology of any \'etale semicategory, the compact open sections form a basis for the topology of any ample semicategory.
Also, the condition that $S^2$ is closed in $S \times S$ holds automatically for any topological category $S$ with Hausdorff $S^0$ by \Cref{ClosedPairs}.
Hence, every ample category in the sense of \cite[Definition 7.1]{Kudryavtseva2025} is an ample semicategory.

Similarly as in the proof of \cite[Proposition 7.3(1)]{Kudryavtseva2025} one can show that an \'etale semicategory $S$ with Hausdorff unit space $S^0$ is ample if and only if $S^0$ is a locally compact Stone space.

\section{\'Etale Semicategory Algebras}\label{s:5}

In this section, we define algebras over unital rings from \'etale semicategories, generalising the algebras of ample categories in \cite[\S9]{Kudryavtseva2025b}, which in turn generalise Steinberg algebras of ample groupoids \cite{Steinberg2010}.  
The algebras of ample categories were independently developed in \cite[\S A.1.3]{Cam25}, therein called Orloff algebras, and shown to include, up to isomorphism, the path algebras of directed graphs. 

It is common in the literature for Steinberg algebras of ample groupoids to be called \'etale groupoid algebras, even when the construction uses the fact that the unit space is totally disconnected, which implies the groupoid is ample.  We do not make such an assumption on the unit space.
We generalise Steinberg algebras to any Hausdorff \'etale semicategory $S$ where $S^2$ is closed (\Cref{KSalg}).

We will only assume the scalars come from a ring $K$ with unit $1$.  In particular, $K$ may not be commutative.  As such, by a \emph{$K$-algebra} we mean a $K$-bimodule $A$ with a product $A \times A \to A$, denoted by $(a, b) \mapsto ab$, that is distributive with respect to addition and associative on $K\times A\times A$, $A\times K\times A$, $A\times A\times K$ and $A\times A\times A$.

If $S$ is a topological space, we call a function $f:S\rightarrow K$ \emph{locally constant} if $f$ is continuous with respect to the discrete topology on $K$.  This means $f^{-1}\{k\}$ is open and hence clopen, for all $k\in K$.  In particular, $\mathrm{supp}(f)=S\setminus f^{-1}\{0\}$ is then clopen.  
We call $f:S\rightarrow K$ \emph{compactly supported} if $\mathrm{supp}(f)$ is contained in some compact set $L$.  If $f$ is also locally constant, it follows that $\mathrm{supp}(f)=\mathrm{supp}(f)\cap L$ is itself compact.  We denote the $K$-valued compactly supported locally constant functions on $S$ by $KS$.  If $K$ is a (unital) ring then $KS$ is immediately seen to be a $K$-bimodule under pointwise sums and (left and right) scalar products.

When $S$ is a Hausdorff ample semicategory, elements of $KS$ will be linear combinations of characteristic functions of compact clopen sections (see \Cref{Semicategory->BooleanCartan}).  As in \cite[\S9]{Kudryavtseva2025b}, it then follows that $KS$ will be a $K$-algebra with respect to the \emph{convolution product}
\[
    (fg)(a) = \sum_{a = bc}f(b)g(c),
\]
for each $f, g \in KS$ and $a \in S$.  We have to work a little harder to prove this when $S$ is merely \'etale, as we now set about showing.  First we show $\mathsf{s}$ is a covering map on compact open subsets.

\begin{lemma}\label{ParallelSections}
    If $S$ is a Hausdorff \'etale semicategory, $x\in S^0$ and $C\subseteq S$ is compact and open then we have open $O\subseteq S^0$ containing $x$ and disjoint open sections $O_1,\ldots,O_n$ such that $C\cap SO=\bigcup_{k=1}^nO_k$ and $\mathsf{s}[O_k]=O$, for all $k$.
\end{lemma}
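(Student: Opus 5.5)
Fix $x\in S^0$. The fibre $C\cap\mathsf{s}^{-1}\{x\}$ is finite: $\mathsf{s}^{-1}\{x\}$ is discrete because $\mathsf{s}$ is locally injective, and it is closed in the Hausdorff space $S$ (points are closed and $\mathsf{s}$ is continuous). So $C\cap\mathsf{s}^{-1}\{x\}$ is a compact discrete set, hence finite. Write it as $\{a_1,\ldots,a_n\}$; the case $n=0$ is allowed.

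Next I choose the sections around these points. Since $S$ is Hausdorff and open sections form a basis, I can pick pairwise disjoint open sections $N_1,\ldots,N_n$ with $a_k\in N_k\subseteq C$. The last inclusion is possible because $C$ is open. The remaining points of $C$ form the set $D=C\setminus\bigcup_k N_k$. This set is closed in the compact set $C$, so it is compact, and so is $\mathsf{s}[D]$, since $\mathsf{s}$ is continuous. We have $x\notin\mathsf{s}[D]$, because every element of $C$ over $x$ lies in some $N_k$. As $S^0$ is Hausdorff (it is a subspace of $S$), $\mathsf{s}[D]$ is closed in $S^0$.

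I then set
\[O=\bigcap_{k=1}^n\mathsf{s}[N_k]\setminus\mathsf{s}[D],\]
which is open because $\mathsf{s}$ is an open map and $S^0$ is open. It contains $x$. When $n=0$ the intersection is read as $S^0$. Define $O_k=N_k\cap\mathsf{s}^{-1}[O]=N_kO$; each $O_k$ is an open section. The $O_k$ are pairwise disjoint because the $N_k$ are.

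It remains to check the two required properties. First, $\mathsf{s}[O_k]=O$: this holds because $O\subseteq\mathsf{s}[N_k]$, so every point of $O$ is the source of some point of $N_k$, and that point lies in $O_k$. Second, $C\cap SO=\bigcup_k O_k$. If $c\in C$ and $\mathsf{s}(c)\in O$, then $c\notin D$ since $\mathsf{s}(c)\notin\mathsf{s}[D]$, so $c\in N_k$ for some $k$ and therefore $c\in O_k$. Conversely, each $O_k\subseteq N_k\subseteq C$ and $\mathsf{s}[O_k]=O$, so $O_k\subseteq C\cap SO$.

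The step I expect to need the most care is excluding the points of $C$ that lie near the fibre over $x$ but are not in the chosen sections. Here compactness does the work, through $\mathsf{s}[D]$ being compact and hence closed in the Hausdorff space $S^0$. This is exactly where both hypotheses, that $C$ is compact and that $S$ is Hausdorff, are used.
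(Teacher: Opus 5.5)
Your proposal is correct and follows essentially the same route as the paper's proof, using the same compact remainder $D$, the same open set $O=\bigcap_k\mathsf{s}[N_k]\setminus\mathsf{s}[D]$, and the same restricted sections $N_kO$. The only difference is how you show $C\cap\mathsf{s}^{-1}\{x\}$ is finite: you argue that the fibre is closed and discrete, whereas the paper covers $C$ by finitely many open sections, each of which meets the fibre at most once.
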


\begin{proof}
    As $C$ is compact and the open sections form a basis, we can cover $C$ by finitely many (open) sections.  Thus $Cx = C \cap \mathsf{s}^{-1}\{x\}$ is finite (possibly empty), say $Cx=\{a_1,\ldots,a_n\}$.  As $S$ is Hausdorff and \'etale, we have disjoint open sections $O_1,\ldots,O_n$ with $a_k\in O_k$, for all $k$.  Because $C$ is open, replacing $O_{k}$ with $O_{k} \cap C$ if necessary, we can assume $O_{k} \subseteq C$ for all $k$.  Then we have compact $D=C\setminus\bigcup_{k=1}^nO_k$ (so $D=C$ when $Cx=\emptyset$) with $x\notin\mathsf{s}[D]$.  Because $\mathsf{s}$ is continuous, $\mathsf{s}[D]$ is compact.  As $S$ is Hausdorff, $\mathsf{s}[D]$ is closed.  Also, $\mathsf{s}$ is open, so $\mathsf{s}[O_k]$ is open for all $k$.  We then have open $O=\bigcap_{k=1}^n\mathsf{s}[O_k]\setminus\mathsf{s}[D]$ containing $x$ such that $C\cap SO=\bigcup_{k=1}^nO'_k$ where $O'_k=O_kO$ and hence $\mathsf{s}[O'_k]=\mathsf{s}[O_k]O=O$, for all $k$.
\end{proof}

In \Cref{CharacteristicProduct} below, we consider the convolution $f\chi_O$ of $f \in KS$ and the characteristic function $\chi_O$ of an open section $O \subseteq S$.  Note that $\chi_O$ may not be in $KS$, but we will see that $f\chi_O$ is still a well-defined function on $S$.

\begin{lemma}\label{CharacteristicProduct}
    If $S$ is a Hausdorff \'etale semicategory, $S^2$ is closed, $f\in KS$ and $O\subseteq S$ is an open section with characteristic function $\chi_O$, then $f\chi_O$ is locally constant on $S\mathsf{s}[O]$.
\end{lemma}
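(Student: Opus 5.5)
First compute the convolution explicitly. For $a\in S\mathsf{s}[O]$ there is a unique $c\in O$ with $\mathsf{s}(c)=\mathsf{s}(a)$. Any factorisation $a=bc'$ with $c'\in O$ has $\mathsf{s}(c')=\mathsf{s}(a)$, so $c'=c$. Hence
\[(f\chi_O)(a)=\sum_{\substack{b:\ a=bc}}f(b),\]
where $c=\sigma(a)$ and $\sigma=\mathsf{s}|_O^{-1}\circ\mathsf{s}$ is a continuous map $S\mathsf{s}[O]\to O$. The sum ranges over $b\in\mathrm{supp}(f)$ with $\mathsf{s}(b)=\mathsf{r}(c)$ and $bc=a$, and it is finite because $\mathrm{supp}(f)$ is compact and meets each fibre of $\mathsf{s}$ in finitely many points. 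So $f\chi_O$ is a well-defined function on $S\mathsf{s}[O]$. We must show that it is constant on a neighbourhood of each $a\in S\mathsf{s}[O]$.

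Fix $a$ and $c=\sigma(a)$, and put $x=\mathsf{r}(c)$; note $c\in S^0S$ since $bc$ is defined whenever a $b$ exists, so we may need to treat separately the case $c\notin S^0S$, where the sum is empty nearby provided $S^0S$ is open and hence its complement behaves well. Write $\mathrm{supp}(f)=C$, which is compact open, and decompose it by finitely many values of $f$ into compact open pieces $C_j=f^{-1}\{k_j\}$. Apply \Cref{ParallelSections} to each $C_j$ at $x$, obtaining an open $W\ni x$ in $S^0$ (take the intersection over $j$) and disjoint open sections $O_{j,1},\dots,O_{j,n_j}$ with $C_j\cap SW=\bigcup_i O_{j,i}$ and $\mathsf{s}[O_{j,i}]=W$. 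Let $b_{j,i}=\mathsf{s}|_{O_{j,i}}^{-1}(x)$; these are exactly the elements of $C$ composable with $c$.

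For $a'$ near $a$, let $c'=\sigma(a')$. By continuity of $\mathsf{r}$ on the open set $S^0S$, $\mathsf{r}(c')\in W$ for $a'$ in a neighbourhood. The candidates $b$ are then $\beta_{j,i}(a')=\mathsf{s}|_{O_{j,i}}^{-1}(\mathsf{r}(c'))$, which depend continuously on $a'$, and
\[(f\chi_O)(a')=\sum_{j,i}k_j\,[\beta_{j,i}(a')\,c'=a'].\]
It remains to show that each indicator $[\beta_{j,i}(a')c'=a']$ is locally constant in $a'$. The map $a'\mapsto\beta_{j,i}(a')\sigma(a')$ is continuous, and it agrees with the identity on a set $E$. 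Since $S$ is Hausdorff, $E$ is closed. This is the main obstacle: $E$ must also be open. To show that, I would use that $\beta_{j,i}(a')\sigma(a')$ lies in the open section $O_{j,i}O$ (open by \Cref{OpenProduct}, a section by \Cref{SemiSlices}) and has the same source as $a'$. Thus $E=(O_{j,i}O)\cap S\mathsf{s}[O]\cap V$, where $V$ is the neighbourhood above. This is open, and it is closed by \Cref{ClopenSection} applied to $O_{j,i}O$, because $\mathsf{s}[O_{j,i}O]\subseteq\mathsf{s}[O]$. Shrinking to a neighbourhood of $a$ on which each indicator is constant gives local constancy. The hypothesis that $S^2$ is closed should enter in checking continuity of the product on the relevant pairs; in the case $c\notin S^0S$ I expect the nearby terms to vanish as well.
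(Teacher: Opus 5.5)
Your handling of the case $c=\sigma(a)\in S^0S$ is correct and is the paper's second case in pointwise form. The paper writes $f\chi_M=\sum_j k_j\chi_{O_jM}$ with $M=\mathsf{r}^{-1}[N]\cap O$ and invokes \Cref{ClopenSection}, which is the same as your identification $E=O_{j,i}O\cap V$ plus the Hausdorff equaliser argument. When $\mathsf{r}(c)\notin\mathsf{s}[\mathrm{supp}(f)]$ your families are empty, so everything vanishes near $a$ and that subcase is fine too.

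The genuine gap is the case $c\notin S^0S$, which you leave as an expectation. Openness of $S^0S$ does not help, because its complement is only closed. So $c$ can be a limit of $c_\lambda\in O\cap S^0S$ with $\mathsf{r}(c_\lambda)\in\mathsf{s}[\mathrm{supp}(f)]$, and then $f\chi_O$ is nonzero arbitrarily close to $a$.

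This is exactly where closedness of $S^2$ is used, together with compactness of $\mathsf{s}[\mathrm{supp}(f)]$. It is not needed for continuity of the product, which holds on $S^2$ by definition. The missing step runs as follows:
\begin{itemize}
\item Suppose every open $N$ with $c\in N\subseteq O$ contains some $c_N$ with $\mathsf{r}(c_N)\in\mathsf{s}[\mathrm{supp}(f)]$.
\item By compactness, a subnet of $(\mathsf{r}(c_N))$ converges to some $y\in\mathsf{s}[\mathrm{supp}(f)]$.
\item Then $(\mathsf{r}(c_N),c_N)\in S^2$ converges to $(y,c)$, so closedness gives $(y,c)\in S^2$, contradicting $c\notin S^0S$.
\item Hence some such $N$ has $\mathrm{supp}(f)N=\emptyset$, and $f\chi_O$ is $0$ on the open set $S\mathsf{s}[N]\ni a$.
\end{itemize}

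The hypothesis is really needed here. Take $S^0$ to be two convergent sequences $w_n\to w_\infty$ and $v_n\to v_\infty$. Let $S=S^0\sqcup O$ (topological disjoint union), where $O=\{c_n\}\cup\{c_\infty\}$ is mapped homeomorphically onto $\{v_n\}\cup\{v_\infty\}$ by $\mathsf{s}$, with $\mathsf{r}(c_n)=w_n$ and $c_\infty$ having no range. This is a Hausdorff \'etale semicategory with $S^0S$ open. Yet for $f=\chi_{\{w_n\}\cup\{w_\infty\}}$ one gets $(f\chi_O)(c_n)=1$ but $(f\chi_O)(c_\infty)=0$. Here $(w_n,c_n)\to(w_\infty,c_\infty)\notin S^2$.
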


\begin{proof}
    Take any $a\in O$.  If $\mathrm{supp}(f)a=\emptyset$ then we claim there is some open $N\subseteq O$ containing $a$ with $\mathrm{supp}(f)N=\emptyset$ or, equivalently, $\mathsf{s}[\mathrm{supp}(f)]N=\emptyset$.  If not then we have $(a_\lambda)\subseteq\mathsf{s}[\mathrm{supp}(f)]O$ with $a_\lambda\rightarrow a$.  Then $\mathsf{r}(a_\lambda)$ is defined for all $\lambda$ and has a subnet converging to some $x$ in the compact set $\mathsf{s}[\mathrm{supp}(f)]$.  But then $xa$ is defined, as $S^2$ is closed in $S\times S$, a contradiction.  This proves the claim, from which it follows that $f\chi_O[S\mathsf{s}[N]]=\{0\}$ and, in particular, $f\chi_O$ is continuous on the open set $S\mathsf{s}[N]$ containing $S\mathsf{s}(a)$.
        
    Now assume instead that $\mathrm{supp}(f)a\neq\emptyset$ and hence $\mathsf{r}(a)$ is defined and lies in the open set $\mathsf{s}[\mathrm{supp}(f)]$.  By \Cref{ParallelSections}, we have open $N\subseteq S^0$ containing $\mathsf{r}(a)$ and disjoint open sections $O_1,\ldots,O_n$ such that $\mathrm{supp}(f)\cap SN=\bigcup_{j=1}^nO_j$ and $\mathsf{s}[O_j]=N$, for all $j$.  As $f$ is locally constant, we can assume $f$ has some constant value $k_j\in K$ on each $O_j$ by replacing $N$ and each $O_j$ as follows.  For all $j$, $\mathsf{r}(a) \in N = \mathsf{s}[O_j]$, so there is some $d_j \in O_j$ such that $d_ja$ is defined, and we let $k_j = f(d_j)$.  Using that the open sections form a basis, for all $j$, there is an open section $O_j'$ such that $d_j \in O_j' \subseteq f^{-1}\{k_j\} \cap O_j$.  Then, it suffices to replace $N$ with $\bigcap_{j=1}^n \mathsf{s}[O_j']$ and $O_j$ with $O_j'N'$ for all $j$.  
    
Now consider the open subset $M=\mathsf{r}^{-1}[N]\cap O$ containing $a$ and note that (since $M\subseteq O$) that $f\chi_M = \sum_{j=1}^n k_j\chi_{O_jM}$.   As each $O_jM$ is an open section with $\mathsf{s}[O_jM]=\mathsf{s}[M]$, each $\chi_{O_jM}$ is locally constant on $S\mathsf{s}[M]$.  This means $f\chi_M$ and hence $f\chi_O$ is locally constant on the open set $S\mathsf{s}[M]$ containing $S\mathsf{s}(a)$.  As $a$ was arbitrary, $f\chi_O$ is thus locally constant on $S\mathsf{s}[O]$.
\end{proof}

\begin{theorem}\label{KSalg}
    If $S$ is a Hausdorff \'etale semicategory, $S^2$ is closed and $f,g\in KS$ then $fg\in KS$ too.  Thus $KS$ forms a $K$-algebra under convolution.
\end{theorem}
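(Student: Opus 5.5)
The plan is to reduce to the case where $g$ is a characteristic function of a compact open section and then apply \Cref{CharacteristicProduct}. First I will show that $g$ is a finite $K$-linear combination of characteristic functions of compact open sections; here $\mathrm{supp}(g)$ is compact and clopen. For each $a\in\mathrm{supp}(g)$, I choose an open section $N_a$ containing $a$ and contained in $g^{-1}\{g(a)\}$. Since $\mathrm{supp}(g)$ is compact, finitely many of these sets cover it. Because $S$ is Hausdorff, \Cref{ClopenSection} says each $N_a$ is closed in the open set $S\mathsf{s}[N_a]$. I would rather avoid relying on that, though. If sums of characteristic functions of open (not necessarily compact) sections suffice, I can refine the cover into finitely many disjoint open sections by taking differences $N_i\setminus\bigcup_{j<i}N_j$. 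These differences need not be open, so I will not do this. Instead I will simply write $fg(a)=\sum_{a=bc}f(b)g(c)$ and localize. Convolution is biadditive, so it suffices to prove that $fg$ is locally constant and compactly supported at each point, working with the finite cover directly.

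Here is the cleaner route. Fix $a\in S$. If $g(c)\neq0$ for some factorization $a=bc$, then $\mathsf{s}(c)=\mathsf{s}(a)$ and $c\in\mathrm{supp}(g)\mathsf{s}(a)$. By \Cref{ParallelSections}, applied to $C=\mathrm{supp}(g)$ and $x=\mathsf{s}(a)$, there are an open $O\ni\mathsf{s}(a)$ and disjoint open sections $O_1,\dots,O_n$ with $\mathrm{supp}(g)\cap SO=\bigcup_k O_k$ and $\mathsf{s}[O_k]=O$. Shrinking $O$ as in the proof of \Cref{CharacteristicProduct}, I can assume $g$ takes a constant value $k_j$ on each $O_j$. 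Then on the open set $SO$ we have $fg=\sum_j (f\chi_{O_j})k_j$, because every factorization $a'=bc$ with $a'\in SO$ and $g(c)\neq0$ has $c\in SO\cap\mathrm{supp}(g)$. Each $f\chi_{O_j}$ is locally constant on $S\mathsf{s}[O_j]=SO$ by \Cref{CharacteristicProduct}, so $fg$ is locally constant on $SO$. If instead $\mathrm{supp}(g)\mathsf{s}(a)=\emptyset$, then the compactness argument from the first case of \Cref{CharacteristicProduct} gives a neighbourhood $SO$ of $a$ on which $fg$ vanishes: $\mathsf{s}[\mathrm{supp}(g)]$ is compact, hence closed, and does not contain $\mathsf{s}(a)$. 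In both cases the sets $SO$ are open and cover $S$, so $fg$ is locally constant.

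For compact support, note that $\mathrm{supp}(fg)\subseteq\mathrm{supp}(f)\mathrm{supp}(g)$. This is the image under the continuous product map of $S^2\cap(\mathrm{supp}(f)\times\mathrm{supp}(g))$, which is compact because $S^2$ is closed, exactly as in \Cref{SemiSlices}. Hence $fg\in KS$. Finiteness of the convolution sum also needs checking. For fixed $a$, the factor $c$ must lie in the finite set $\mathrm{supp}(g)\mathsf{s}(a)$, and each such $c$ determines at most one $b$ with $b\in\mathrm{supp}(f)\mathsf{s}(\mathsf{r}(c))$ and $bc=a$. To see this, cover $\mathrm{supp}(f)$ by finitely many sections; then $\mathrm{supp}(f)\mathsf{r}(c)$ is finite. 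The $K$-algebra axioms are then routine: bilinearity and compatibility with the two-sided $K$-actions are immediate, and associativity follows from associativity in $S$ via a reindexing of finite sums over $a=bcd$.

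The main obstacle is the reduction showing $fg=\sum_j (f\chi_{O_j})k_j$ on $SO$. It relies on the fact that every relevant factor $c$ lands in one of the parallel sections over $O$, and that $O$ can be shrunk so that $g$ is constant on each section while the decomposition from \Cref{ParallelSections} is preserved.
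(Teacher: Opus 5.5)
Your proof is correct and takes essentially the same route as the paper: localize at $\mathsf{s}(a)$ via \Cref{ParallelSections}, shrink so that $g=\sum_j k_j\chi_{O_j}$ on $SO$, apply \Cref{CharacteristicProduct} to each $f\chi_{O_j}$, note that $fg$ vanishes off $S\mathsf{s}[\mathrm{supp}(g)]$, and get compact support from $\mathrm{supp}(fg)\subseteq\mathrm{supp}(f)\mathrm{supp}(g)$ with $S^2$ closed. One small slip: a given $c$ need not determine a unique $b$ with $bc=a$, since semicategories need not be right cancellative, but the finiteness of $\mathrm{supp}(f)\mathsf{r}(c)$, which you also prove, already makes the convolution sum finite.
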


\begin{proof}
    First note that $fg$ is zero and, in particular, locally constant on the clopen set $S\setminus(S\mathsf{s}[\mathrm{supp}(g)])$.  To see that $fg$ is also locally constant on the clopen set $S\mathsf{s}[\mathrm{supp}(g)]$, take any $x\in\mathsf{s}[\mathrm{supp}(g)]$.  By \Cref{ParallelSections}, we have open $O\subseteq S^0$ containing $x$ and disjoint open sections $O_1,\ldots,O_n$ such that $\mathrm{supp}(g)\cap SO=\bigcup_{j=1}^nO_j$ and $\mathsf{s}[O_j]=O$, for all $j$.  Shrinking $O$ if necessary, like in the proof of \Cref{CharacteristicProduct}, we may further assume that $g$ is constant on $O_j$, for all $j$.  Thus on $SO$, $g$ is just $\sum_{j=1}^nk_j\chi_{O_j}$, for some $k_1,\ldots,k_n\in K\setminus\{0\}$.  By \Cref{CharacteristicProduct}, $f\chi_{O_j}$ is a well-defined function that is locally constant on $S\mathsf{s}[O_j]$ for all $j$.  Consider the sum of right scalar products $\sum_{j=1}^n f\chi_{O_j}k_j$, i.e.~$\sum_{j=1}^n (f\chi_{O_j})(a)k_j$ for all $a \in S$.  Fix $a \in SO$.  For all $j$, $f\chi_{O_j}$ is locally constant on $S\mathsf{s}[O_j] = SO$, which contains $a$, so there is an open $N_j \subseteq SO$ such that $a \in N_j$ and $f\chi_{O_j}$ is constant on $N_j$.  Then, $\sum_{j=1}^n f\chi_{O_j}k_j$ is constant on $\bigcap_{j=1}^n N_j \ni a$, and hence locally constant on $SO$.  For any $a \in SO$, if $a = bc$, then $c \in SO$ too, and so 
    \[
        (fg)(a) 
        = \sum_{a = bc} f(b)g(c) 
        = \sum_{a = bc} \sum_{j=1}^n f(b)k_j\chi_{O_j}(c)
        = \sum_{a = bc} \sum_{j=1}^n f(b)\chi_{O_j}(c)k_j
         = \sum_{j=1}^n (f\chi_{O_j})(a)k_j.
    \]
    Thus, $fg$ is just $\sum_{j=1}^n f\chi_{O_j}k_j$, and hence is locally constant, on $SO$.  As $x$ was arbitrary, $fg$ is thus locally constant on $S\mathsf{s}[\mathrm{supp}(g)]$.  Since $fg$ is locally constant on both the clopen set $S\mathsf{s}[\mathrm{supp}(g)]$ and its complement in $S$, $fg$ is locally constant.  Notice $\mathrm{supp}(fg) \subseteq \mathrm{supp}(f)\mathrm{supp}(g)$, which is compact because $S^2$ is closed, so $fg$ is compactly supported.  Thus, $fg \in KS$.  Because $(ab)c$ is defined if and only if $a(bc)$ is defined, the associativity of the multiplication in $K$ makes the convolution on $KS$ associative.  Similarly, distributivity in $K$ gives rise to distributivity in $KS$.
\end{proof}

\section{Topological representation}\label{s:6}

Recently in \cite{Kudryavtseva2025}, the classic Stone duality between generalized Boolean algebras and locally compact Hausdorff spaces was extended to Boolean restriction semigroups with local units and ample categories. The paper \cite{Kudryavtseva2025} provided a construction of an ample category from a preBoolean restriction semigroup (see also \cite{Kudryavtseva2025b} where an ample category is constructed from an arbitrary restriction semigroup with local units). Here we provide a construction similar to that in \cite{Kudryavtseva2025} for semi-Boolean restriction semigroups with binary meets, potentially with unbounded pairs of projections and no local units,  taking the approach via ultrafilters rather than that via germs used in \cite{Kudryavtseva2025,Kudryavtseva2025b}.\footnote{For a detailed explanation of equivalence of the ultrafilter and the germ approaches to constructing ample groupoids of inverse semigroups, see \cite{ACaHJL22}. We believe these approaches are equivalent for constructing semicategories from restriction semigroup, although this question was not treated in the literature.} 

For the rest of this section let us assume that
\begin{center}
    \textbf{$B$ is a restriction semigroup with projections $P$ containing a left zero $0$ for $B$.}
\end{center}

It is then easy to show (see \cite[Lemma 5.1]{Kudryavtseva2025}) that $0$ is also a right zero and the minimum element of $B$.

\begin{proposition}\label{FilterSource}
    For any $a\in B$, we have mutually inverse $\subseteq$-preserving maps from filters of $B$ containing $a$ to filters of $B$ containing $\mathsf{s}(a)$ and versa given by
    \[F\mapsto\mathsf{s}[F]^\leq\qquad\text{and}\qquad F\mapsto (aF)^\leq.\]
\end{proposition}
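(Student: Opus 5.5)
We verify four things in turn: both maps land in the right kind of filter, each preserves $\subseteq$, and they are mutually inverse. The tools are the Ehresmann identity $\mathsf{s}(ab)=\mathsf{s}(\mathsf{s}(a)b)$, the characterisation $a\leq b\Leftrightarrow a=b\mathsf{s}(a)$, the monotonicity \eqref{OrderPreservingSource} of $\mathsf{s}$, the left compatibility \eqref{OrderPreservingLeft}, and the restriction identity $pb\leq b$. Throughout, a filter means a nonempty up-set that is directed, since $B$ need not be a meet-semilattice.

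\textbf{Well-definedness.} Let $F$ be a filter containing $a$. Then $\mathsf{s}(a)\in\mathsf{s}[F]$, so $\mathsf{s}[F]^\leq$ contains $\mathsf{s}(a)$ and is an up-set. For directedness, take $x\geq\mathsf{s}(f)$ and $y\geq\mathsf{s}(g)$ with $f,g\in F$. Choose $h\in F$ with $h\leq f,g$. Then $\mathsf{s}(h)\leq\mathsf{s}(f),\mathsf{s}(g)$ by \eqref{OrderPreservingSource}, so $\mathsf{s}(h)$ is a common lower bound of $x$ and $y$ lying in $\mathsf{s}[F]^\leq$.

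Conversely, let $F$ be a filter containing $\mathsf{s}(a)$. Then $a=a\mathsf{s}(a)\in aF$, so $a\in(aF)^\leq$. For directedness, take $f,g\in F$ and $h\in F$ below both. By \eqref{OrderPreservingLeft}, $ah\leq af,ag$.

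Both maps are clearly $\subseteq$-preserving.

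\textbf{Round trip starting from a filter $F\ni a$.} We show $(a\,\mathsf{s}[F]^\leq)^\leq=F$.

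For $\supseteq$, take $f\in F$ and pick $h\in F$ with $h\leq f,a$. Since $h\leq a$, we have $h=a\mathsf{s}(h)\in a\,\mathsf{s}[F]$. Hence $f\geq h$ lies in the left-hand side.

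For $\subseteq$, suppose $p\geq\mathsf{s}(f)$ with $f\in F$. Pick $h\in F$ with $h\leq f,a$. Then $\mathsf{s}(h)\leq\mathsf{s}(f)\leq p$. So $ap\geq a\mathsf{s}(h)=h\in F$, giving $ap\in F$. Since $F$ is an up-set, everything above $ap$ also lies in $F$.

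\textbf{Round trip starting from a filter $F\ni\mathsf{s}(a)$.} We show $\mathsf{s}[(aF)^\leq]^\leq=F$.

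For $\supseteq$, take $f\in F$ and $h\in F$ with $h\leq f,\mathsf{s}(a)$. Then $h$ is a projection with $h\leq\mathsf{s}(a)$. By the Ehresmann identity, $\mathsf{s}(ah)=\mathsf{s}(\mathsf{s}(a)h)=\mathsf{s}(a)h=h$. Thus $h\in\mathsf{s}[aF]$, and $f\geq h$ lies in the left-hand side.

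For $\subseteq$, take $x\geq ay$ with $y\in F$. Then $\mathsf{s}(x)\geq\mathsf{s}(ay)$ by \eqref{OrderPreservingSource}. Choose $h\in F$ with $h\leq y,\mathsf{s}(a)$. By \eqref{OrderPreservingLeft}, $ah\leq ay$, so $\mathsf{s}(ah)\leq\mathsf{s}(ay)$. As computed above, $\mathsf{s}(ah)=h$. Therefore $\mathsf{s}(x)\geq h\in F$, so $\mathsf{s}(x)\in F$, and so is anything above it.

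\textbf{Main obstacle.} The delicate point is that $F$ may contain non-projections, so we cannot apply $\mathsf{s}$ to elements of $F$ carelessly. We also never use meets in $B$, only directedness. In each step where an element must be brought below $a$ or below $\mathsf{s}(a)$, we replace it by a common lower bound inside the filter. The identity $\mathsf{s}(ah)=h$ for projections $h\leq\mathsf{s}(a)$ is what makes the second round trip work.
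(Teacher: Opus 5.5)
Your proof is correct and follows essentially the same route as the paper: you establish well-definedness via \eqref{OrderPreservingSource} and \eqref{OrderPreservingLeft}, and you handle both round trips by passing to a common lower bound inside the filter, using $h=a\mathsf{s}(h)$ for $h\leq a$ and $\mathsf{s}(ah)=\mathsf{s}(\mathsf{s}(a)h)=h$ for projections $h\leq\mathsf{s}(a)$. The differences are cosmetic: you track the intermediate up-closures $(a\,\mathsf{s}[F]^\leq)^\leq$ and $\mathsf{s}[(aF)^\leq]^\leq$ explicitly, whereas the paper silently works with $(a\mathsf{s}[F])^\leq$ and $\mathsf{s}[aF]^\leq$ and uses a slightly different computation ($p\leq\mathsf{s}(a)f$) for one inclusion.
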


\begin{proof}
    For each filter $F$ containing $a$, $\mathsf{s}[F]^\leq$ is a filter containing $\mathsf{s}(a)$ because the source map is order-preserving, as per \eqref{OrderPreservingSource}. For each filter $F$ containing $\mathsf{s}(a)$, $(aF)^\leq$ is a filter containing $a$ because left multiplication is order-preserving, as per \eqref{OrderPreservingLeft}.  The maps are also immediately seen to respect inclusions.  Now take a filter $F$ containing $a$.  For any $f\in F$, we have $g\in F$ with $g\leq a,f$ and hence $f\geq g=a\mathsf{s}(g)\in a\mathsf{s}[F]$, showing that $F\subseteq(a\mathsf{s}[F])^\leq$.  But also $g=g\mathsf{s}(g)\leq a\mathsf{s}(f)$, showing that $a\mathsf{s}[F]\subseteq F^\leq\subseteq F$ and hence $(a\mathsf{s}[F])^\leq\subseteq F^\leq\subseteq F$.
    
    On the other hand, say $F$ is a filter containing $\mathsf{s}(a)$.  For any $f\in F$, we have $p\in F$ with $p\leq\mathsf{s}(a),f$.  As $\mathsf{s}(a)$ is a projection, so is $p$ and hence $p=pp\leq\mathsf{s}(a)f$, which implies $p=\mathsf{s}(p)\leq\mathsf{s}(\mathsf{s}(a)f)=\mathsf{s}(af)$.  This shows that $\mathsf{s}[aF]\subseteq F^\leq\subseteq F$ and hence $\mathsf{s}[aF]^\leq\subseteq F^\leq\subseteq F$.  But also $\mathsf{s}(ap)=\mathsf{s}(\mathsf{s}(a)p)=\mathsf{s}(a)p\leq p < f$, showing that $F\subseteq\mathsf{s}[aF]^\leq$.  Thus the given maps are indeed mutual inverses of each other.
\end{proof}

An important immediate consequence is the following.

\begin{corollary}\label{UltraSource}
    A filter $F\subseteq B$ is an ultrafilter if and only if $\mathsf{s}[F]^\leq$ is.
\end{corollary}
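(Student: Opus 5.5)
Fix a filter $F\subseteq B$ and pick any $a\in F$ (filters are non-empty). By \Cref{FilterSource}, the maps $G\mapsto\mathsf{s}[G]^\leq$ and $G\mapsto(aG)^\leq$ are mutually inverse $\subseteq$-preserving bijections between filters containing $a$ and filters containing $\mathsf{s}(a)$. Thus they form an order isomorphism between these two posets of filters. The plan is to show that this isomorphism also matches properness, and that maximality among all proper filters can be tested inside these posets.

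\emph{Properness.} Recall that $0$ is the minimum of $B$, so a filter is proper exactly when it omits $0$. We show that a filter $G\ni a$ contains $0$ iff $\mathsf{s}[G]^\leq$ does.
\begin{itemize}
\item If $0\in G$, then $\mathsf{s}(0)\in\mathsf{s}[G]$. Moreover $\mathsf{s}(0)=0$, since $0\cdot 0=0$ gives $0\in 0^{\mathsf{S}}$, and $0$ is below every projection.
\item Conversely, suppose $0\in\mathsf{s}[G]^\leq$. Then $0\ge \mathsf{s}(g)$ for some $g\in G$, so $\mathsf{s}(g)=0$. Hence $g=g\mathsf{s}(g)=g0=0$, using that $0$ is a right zero. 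So $0\in G$.
\end{itemize}
It follows that the bijection restricts to one between proper filters containing $a$ and proper filters containing $\mathsf{s}(a)$.

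\emph{Maximality.} A proper filter $F\ni a$ is an ultrafilter iff it is maximal among proper filters containing $a$. Indeed, any proper filter $G\supseteq F$ automatically contains $a$, so it lies in the same poset. Similarly, $\mathsf{s}[F]^\leq$ contains $\mathsf{s}(a)$, and every proper filter extending it contains $\mathsf{s}(a)$. Hence $\mathsf{s}[F]^\leq$ is an ultrafilter iff it is maximal among proper filters containing $\mathsf{s}(a)$.

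An order isomorphism preserves maximal elements. Therefore $F$ is an ultrafilter iff $\mathsf{s}[F]^\leq$ is, which is the statement.

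The only mildly delicate point is the properness step: it needs $\mathsf{s}(g)=0\Rightarrow g=0$. This is exactly where the hypothesis that $0$ is a zero (and hence the minimum) of $B$ is used.
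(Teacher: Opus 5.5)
Your proof is correct and follows the same route as the paper: you transfer maximality along the order isomorphism of \Cref{FilterSource}, after checking that it preserves properness and that every proper filter extending $F$ (resp.\ $\mathsf{s}[F]^\leq$) already contains $a$ (resp.\ $\mathsf{s}(a)$). Your explicit check that $0\in G$ if and only if $0\in\mathsf{s}[G]^\leq$ (via $\mathsf{s}(0)=0$ and $g=g\mathsf{s}(g)$) fills in the step the paper only asserts with ``note that''.
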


\begin{proof}
    Firstly, note that $F$ is proper if and only if $\mathsf{s}[F]^\leq$ is.  Take $a\in F$, and consider the associated maps from \Cref{FilterSource}.  Suppose $F$ is an ultrafilter and $\mathsf{s}[F]^\leq \subseteq G'$ for some proper filter $G'$.  Then, $G'$ is a filter containing $\mathsf{s}(a)$, so $G' = \mathsf{s}[G]^\leq$ for a filter $G$ containing $a$.  Because the maps are mutually inverse and order-preserving, $F \subseteq G$.  Since $\mathsf{s}[G]^\leq$ is proper, so is $G$, and so $F = G$, which implies $\mathsf{s}[F]^\leq = \mathsf{s}[G]^\leq$.  Hence, $\mathsf{s}[F]^\leq$ is an ultrafilter.  The converse holds by a similar argument.
\end{proof}

With this in hand we can show that the ultrafilters of $B$ form a semicategory.

\begin{proposition}\label{UltraUnit}
    The ultrafilters $\mathcal{U}(B)$ form a semicategory under the product
    \[V\cdot W=(VW)^\leq\text{ defined precisely when }0\notin VW.\]
    Moreover, the unit ultrafilters are precisely those containing a projection, i.e.
    \[\tag{Units}\label{Units}\mathcal{U}(B)^0=\bigcup_{p\in P}O_p=\{U\in\mathcal{U}(B)\mathrel{|}U\cap P\neq\emptyset\}.\]
\end{proposition}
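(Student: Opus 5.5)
The plan is to verify the semicategory axioms directly, using \Cref{FilterSource} and \Cref{UltraSource} to move between an ultrafilter and its source ultrafilter. For $V\in\mathcal{U}(B)$ I set $\mathsf{s}(V)=\mathsf{s}[V]^\leq$, which is an ultrafilter by \Cref{UltraSource}.

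\textbf{Step 1 (products of ultrafilters).} I first show that if $0\notin VW$ then $(VW)^\leq$ is an ultrafilter. It is an up-set. It is closed under meets because $v_1w_1, v_2w_2\geq (v_1\wedge v_2)(w_1\wedge w_2)$, which follows from \eqref{OrderPreservingLeft} and \eqref{OrderPreservingRight}. It is proper because $0\notin VW$.

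For maximality, fix $v\in V$ and $w\in W$ with $vw\neq0$. I aim to show $\mathsf{s}[(VW)^\leq]^\leq=\mathsf{s}(W)$, and then conclude with \Cref{UltraSource}.
\begin{itemize}
\item First, $\mathsf{s}(v'w')\leq\mathsf{s}(w')$, so $\mathsf{s}[VW]^\leq\subseteq\mathsf{s}(W)$.
\item Since $\mathsf{s}[VW]^\leq$ is a proper filter inside the ultrafilter $\mathsf{s}(W)$, maximality gives equality.
\end{itemize}
\Cref{UltraSource} then makes $(VW)^\leq$ an ultrafilter.

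\textbf{Step 2 (associativity).} I show that $(V\cdot W)\cdot X=(VWX)^\leq=V\cdot(W\cdot X)$ whenever either side is defined. This uses
\[
(VW)^\leq X\subseteq (VWX)^\leq \quad\text{and}\quad VWX\subseteq (VW)^\leq X,
\]
together with the fact that both sides are ultrafilters, hence equal. Definedness agrees on both sides since both amount to $0\notin VWX$.

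\textbf{Step 3 (units).} This is the main obstacle. Suppose $U\ni p$ with $p\in P$. I show $U$ is a unit as follows.
\begin{itemize}
\item If $U\cdot W$ is defined, then $(UW)^\leq\supseteq (pW)^\leq$, and $pw\leq w$ by (Restriction Semigroup). Hence $W\subseteq(UW)^\leq$, and maximality gives $U\cdot W=W$.
\item If $V\cdot U$ is defined, then $vp\leq v$ gives $V\cdot U\subseteq V$ directly, so equality holds by maximality.
\end{itemize}

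Conversely, suppose $U$ is a unit. Then $U\cdot\mathsf{s}(U)$ is defined: for $u,u'\in U$ we have $u\mathsf{s}(u')\geq (u\wedge u')\mathsf{s}(u\wedge u')=u\wedge u'\neq0$, and bounded meets exist because both lie in $U$. So $U=U\cdot \mathsf{s}(U)$, and it should also equal $\mathsf{s}(U)$, which contains projections. To obtain this I use that $\mathsf{s}(U)$ is itself a unit (by the forward direction) with $U\cdot\mathsf{s}(U)$ defined. The unit property of $U$, applied with $a=\mathsf{s}(U)$, gives $U\cdot\mathsf{s}(U)=\mathsf{s}(U)$. Hence $U=\mathsf{s}(U)\ni\mathsf{s}(u)$.

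\textbf{Step 4 (semicategory conditions).} We have $V=V\cdot\mathsf{s}(V)$ with $\mathsf{s}(V)$ a unit, so every $V$ lies in $SS^0$. Here $\mathsf{s}(V)$ is the unique unit $E$ with $V\cdot E=V$: any such $E$ contains a projection $p$ with $vp\neq0$, forcing $E=(\mathsf{s}(V)p)^\leq=\mathsf{s}(V)$.

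Finally I check that $V\cdot W$ is defined iff $\mathsf{s}(V)=\mathsf{r}(W)$, where $\mathsf{r}(W)$ is the unit acting on $W$ from the left. This amounts to showing that $0\notin VW$ iff $0\notin\mathsf{s}[V]W$. The key identity is
\[
\mathsf{s}(vw)=\mathsf{s}(\mathsf{s}(v)w),
\]
and $vw=0$ iff $\mathsf{s}(vw)=0$. I expect this characterization of definedness via units to need the most care, in particular showing that $\mathsf{s}[V]^\leq\cdot W$ being defined pins down the unit uniquely.
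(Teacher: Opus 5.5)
Your overall route is the paper's: pass to source ultrafilters via \Cref{UltraSource}, identify units through projections, and handle composability with $\mathsf{s}(vw)=\mathsf{s}(\mathsf{s}(v)w)$. However, the crucial maximality step in Step 1 is invalid as written.

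The inequality $\mathsf{s}(v'w')\leq\mathsf{s}(w')$ says that $\mathsf{s}(w')$ lies \emph{above} an element of $\mathsf{s}[VW]$. So it gives $\mathsf{s}(W)=\mathsf{s}[W]^\leq\subseteq\mathsf{s}[VW]^\leq$, not the inclusion $\mathsf{s}[VW]^\leq\subseteq\mathsf{s}(W)$ that you state, which does not follow. Even granting your inclusion, maximality of $\mathsf{s}(W)$ says nothing about proper filters \emph{contained} in it, so ``maximality gives equality'' fails in that direction.

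The repair is to use the correct inclusion. The set $\mathsf{s}[(VW)^\leq]^\leq=\mathsf{s}[VW]^\leq$ is a filter. It is proper, because $\mathsf{s}(x)=0$ forces $x=x\mathsf{s}(x)=0$ while $0\notin VW$. It contains the ultrafilter $\mathsf{s}(W)$, so the two coincide, and \Cref{UltraSource} makes $(VW)^\leq$ an ultrafilter. This is exactly the paper's argument.

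The same reversal appears in the second bullet of Step 3. There $vp\leq v$ with $vp\in VU$ shows $v\in(VU)^\leq$, i.e.\ $V\subseteq V\cdot U$, not $V\cdot U\subseteq V$. Equality then comes from maximality of $V$ and properness of $V\cdot U$, just as you correctly argued in the first bullet.

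A few smaller repairs:
\begin{itemize}
\item In this section $B$ is only a restriction semigroup with zero, so $v_1\wedge v_2$ need not exist. Membership in a filter gives common \emph{lower} bounds, whereas \eqref{ConditionalMeets} needs a common upper bound. Use \eqref{Directed} instead. In Step 1, pick $v\leq v_1,v_2$ in $V$ and $w\leq w_1,w_2$ in $W$, so $vw\leq v_1w_1,v_2w_2$. In Step 3, pick $c\in U$ with $c\leq u,u'$; then $c\leq u\mathsf{s}(u')$, so $u\mathsf{s}(u')\neq0$.
\item In Step 4 the unexplained identity $E=(\mathsf{s}(V)p)^\leq$ can be dropped. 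Uniqueness of a unit $E$ with $V\cdot E=V$ holds in any semigroupoid (\S\ref{s:3}).
\item For composability, one direction is automatic in semigroupoids. For the other, it suffices that $\mathsf{s}(v)w\in\mathsf{r}(W)\cdot W=W\not\ni0$ for $v\in V$, $w\in W$, which gives $\mathsf{s}(vw)=\mathsf{s}(\mathsf{s}(v)w)\neq0$, as in the paper.
\end{itemize}
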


\begin{proof}
    First we claim $(VW)^\leq$ is indeed an ultrafilter when $V$ and $W$ are and $0\notin VW$.  Because both left and right multiplication in $B$ are order-preserving -- see \eqref{OrderPreservingLeft} and \eqref{OrderPreservingRight}, $(VW)^\leq$ is a filter.  Using the defining property of restriction semigroups and that the source map is order-preserving, we can show $\mathsf{s}[W]\subseteq\mathsf{s}[VW]^\leq$.  Then, $0\notin VW$ implies $0\notin\mathsf{s}[(VW)^\leq]^\leq=\mathsf{s}[VW]^\leq\supseteq\mathsf{s}[W]^\leq$.  By \Cref{UltraSource}, $\mathsf{s}[W]^\leq$ is an ultrafilter so this inclusion must be an equality, i.e.~$\mathsf{s}[(VW)^\leq]^\leq$ and hence $(VW)^\leq$ is an ultrafilter, again by \Cref{UltraSource}.  For any other $X\in\mathcal{U}(B)$, note that if $(V\cdot W)\cdot X$ is defined then $0\notin(VW)^\leq X\supseteq VWX$, while if $0\notin VWX$ then certainly $0\notin VW$ and $0\notin(VWX)^\leq\supseteq(VW)^\leq X$, i.e.~$(V\cdot W)\cdot X$ is defined.  So $(V\cdot W)\cdot X$ is defined precisely when $0\notin VWX$, in which case $(V\cdot W)\cdot X=((VW)^\leq X)^\leq=(VWX)^\leq$.  But the same argument shows that $V\cdot(W\cdot X)$ is defined precisely when $0\notin VWX$, in which case $V\cdot(W\cdot X)=(VWX)^\leq=(V\cdot W)\cdot X$.  Thus $\mathcal{U}(B)$ is a semigroupoid.

    Next we claim that $V\in\mathcal{U}(B)$ is a unit precisely when $V\cap P\neq\emptyset$.  Indeed, if $p\in V\cap P$ and $0\notin VW$ then $V\cdot W=(VW)^\leq\supseteq(pW)^\leq\supseteq W$.  As $W$ is an ultrafilter, this inclusion must be an equality and hence $V\cdot W=W$.  The same argument shows $W\cdot V=W$ when $0\notin WV$, proving the claim.  For any $U\in\mathcal{U}(B)$, this means $\mathsf{s}[U]^\leq\in\mathcal{U}(B)^0$, again by \Cref{UltraSource}.  By \Cref{FilterSource}, $U=(u\mathsf{s}[U])^\leq=(u\mathsf{s}[U]^\leq)^\leq$ for any $u\in U$.  Notice $0\notin U\mathsf{s}[U]^\leq$ -- otherwise $0 = a\mathsf{s}(b)$ for some $a, b \in U$ and so there is a $c \in U \cap a^\geq \cap b^\geq$ satisfying $0 = a\mathsf{s}(b) \geq c\mathsf{s}(c) = c$, a contradiction.  Hence, $U\cdot\mathsf{s}[U]^\leq$ is defined, which means that $U$ has source unit $\mathsf{s}(U)=\mathsf{s}[U]^\leq$.  In particular, if $U\in\mathcal{U}(B)^0$ then $U=\mathsf{s}(U)\supseteq\mathsf{s}[U]$ and hence $U\cap P\neq\emptyset$, proving \eqref{Units}.

    Now say $V,W\in\mathcal{U}(B)$ and $W$ has a range unit $\mathsf{r}(W)=\mathsf{s}(V)$.  For all $v\in V$ and $w\in W$, it follows that $\mathsf{s}(v)w\in\mathsf{s}(V)\cdot W=\mathsf{r}(W)\cdot W=W\not\ni0$ so $\mathsf{s}(vw)=\mathsf{s}(\mathsf{s}(v)w)\neq0$ and hence $vw\neq0$.  Thus $0\notin VW$ so $V\cdot W$ is defined, showing $\mathcal{U}(S)$ is indeed a semicategory.
\end{proof}

We can also characterise the invertible ultrafilters much like the unit ultrafilters.

\begin{proposition}\label{UltraInvertible}
    The invertible ultrafilters are those containing an invertible of $B$, i.e.
    \[\tag{Invertibles}\label{Invertibles}\mathcal{U}(B)^\times=\bigcup_{a\in B^\times}O_a=\{U\in\mathcal{U}(B)\mathrel{|}U\cap B^\times\neq\emptyset\}.\]
\end{proposition}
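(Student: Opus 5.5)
We prove both inclusions, following the pattern of the proof of \Cref{UltraUnit}. Recall from there that every $U\in\mathcal{U}(B)$ has source unit $\mathsf{s}(U)=\mathsf{s}[U]^\leq$. Recall also that $V\cdot W$ is defined exactly when $0\notin VW$, and that it then equals $(VW)^\leq$.

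\emph{If $U$ contains an invertible, then $U$ is invertible.} Suppose $a\in U\cap B^\times$ and set $V=(a^{-1}U)^\leq$, where $a^{-1}U=\{a^{-1}u\mid u\in U\}$. We aim to show that $V$ is an ultrafilter with $V\cdot U=\mathsf{s}(U)$ and $U\cdot V=\mathsf{s}(V)$, or equivalently that both products lie in $\mathcal{U}(B)^0$. The semicategory observation at the end of \S\ref{s:3} (if $ab,ba\in P$ then $a,b$ are inverses) then finishes this direction.

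By directedness, $U$ has a base of elements $b\leq a$. Each such $b$ is invertible by \eqref{IdealInvertibles}, with $b^{-1}=\mathsf{s}(b)a^{-1}$. Then $a^{-1}b=a^{-1}a\mathsf{s}(b)=\mathsf{s}(a)\mathsf{s}(b)=\mathsf{s}(b)$, which gives $(a^{-1}U)^\leq=\mathsf{s}[U]^\leq$.

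This looks circular at first, but it is useful. It shows $a^{-1}\cdot U$ lands in the units, so it is the wrong way to build the inverse. The correct candidate is instead $V=(\mathsf{s}[U]^\leq a^{-1})^\leq$, the image of $U$ under $b\mapsto b^{-1}$ for $b\le a$, namely $V=\{b^{-1}: b\in U,\ b\leq a\}^\leq$. Inversion is an order isomorphism on $a^\geq$ onto $(a^{-1})^\geq$, and by \Cref{PrincipalRestriction} ultrafilters containing $a$ correspond to ultrafilters of $a^\geq$. Hence $V$ is an ultrafilter containing $a^{-1}$.

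Next compute the products. For $b,c\leq a$ in $U$, with $d\le b,c$ in $U$ and $d\le a$, we get $d^{-1}d=\mathsf{s}(d)\neq0$, and this lies below $b^{-1}c$ by monotonicity, so $0\notin VU$. Moreover $VU\supseteq\{\mathsf{s}(d)\}$, so $V\cdot U\ni\mathsf{s}(d)$ is a unit by \eqref{Units}. Symmetrically, $dd^{-1}=\mathsf{r}(d)\in U\cdot V$, which is again a unit.

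\emph{If $U$ is invertible, then $U$ contains an invertible.} Let $V=U^{-1}$. Then $(UV)^\leq=\mathsf{r}(U)$ contains a projection $p$, so $p\geq uv$ for some $u\in U$, $v\in V$. Similarly $(VU)^\leq\ni q\geq v'u'$.

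Directedness lets us shrink to a single $u\in U$ and $v\in V$ with $uv\leq p$ and $vu\leq q$. Then $uv$ and $vu$ are projections, since $P^\geq=P$. Put $e=uv$, $f=vu$, $u_0=eu$ and $v_0=fv$. We expect $u_0v_0=uvvu\cdots$, and the aim is an identity like $u_0v_0=\mathsf{r}$-type projection together with $v_0u_0\in P$.

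The restriction-semigroup identities should give $u_0=u\,\mathsf{s}(eu)\leq u$, so $u_0\in U$ provided $\mathsf{s}(uvu)\in\mathsf{s}[U]^\leq$. This holds because $U\cdot V\cdot U=U$ forces $uvu\in U$ after shrinking.

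It remains to check that $u':=uvu\in U$ and $v':=vuv$ are mutually inverse. From $uv=e\in P$ we get $u'v'=(uv)^3=e$ and $v'u'=(vu)^3=f$, both projections. We then verify $\mathsf{r}(u')=u'v'=\mathsf{s}(v')$ using the criterion in \S\ref{Inverses}: it suffices to have $\mathsf{r}(a)=ab=\mathsf{s}(b)$ and $ba\in P$.

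This last verification, that $\mathsf{r}(u')=e$ and $\mathsf{s}(v')=e$, is the main obstacle, because restriction semigroups need not have range projections. We expect to handle it by replacing $u'$ with $e u'$ (note $e u'=u'$) and showing directly that $e$ is the least projection fixing $u'$ on the left: if $pu'=u'$ then $pe=pu'v'=u'v'=e$.
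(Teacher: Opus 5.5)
Your proof is correct and follows the paper's. In the first direction, your inverse $\{b^{-1}\mid b\in U,\ b\leq a\}^\leq$ is exactly the paper's $(a^{-1}Ua^{-1})^\leq$, since $a^{-1}ba^{-1}=\mathsf{s}(b)a^{-1}=b^{-1}$ for $b\leq a$; you verify $0\notin UV\cup VU$ by monotonicity rather than the paper's computation $0\notin a^{-1}Ua^{-1}Ua^{-1}$. In the second direction, your $uvu$ and $vuv$ are precisely the paper's $d=cc'c$ and $d'=c'cc'$. You should delete the abandoned exploratory lines and write out the mirror check $\mathsf{s}(v')=e$, which you left implicit: it follows from $v'e=v'$ together with $v'p=v'\Rightarrow ep=u'v'p=u'v'=e$.
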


\begin{proof}
    For any $a\in B^\times$, using that $(B^\times)^\geq = B^\times$ and that the inverse map on $B^\times$ is order-preserving, we can argue as in the proof of \Cref{FilterSource} to show that $F\mapsto(a^{-1}Fa^{-1})^\leq$ and $F\mapsto(aFa)^\leq$ yield mutually inverse $\subseteq$-preserving maps from filters containing $a$ to filters containing $a^{-1}$ and vice versa that identify proper filters with proper filters.  In particular, for any $U\in O_a$, we have $V \coloneqq (a^{-1}Ua^{-1})^\leq\in O_{a^{-1}}$.  We proceed to show $V$ is an inverse of $U$.  It suffices to show $U\cdot V,V\cdot U\in\mathcal{U}(B)^0$ because $\mathcal{U}(B)^0$ is a semicategory.  We first claim that $0\notin a^{-1}Ua^{-1}Ua^{-1}$.  Indeed, if we had $u,v\in U$ with $a^{-1}ua^{-1}va^{-1}=0$ then taking any $t\in U$ with $t\leq a,u,v$ would yield
    \[0=aa^{-1}ta^{-1}ta^{-1}a=aa^{-1}a\mathsf{s}(t)a^{-1}a\mathsf{s}(t)a^{-1}a=a\mathsf{s}(a)\mathsf{s}(t)\mathsf{s}(a)\mathsf{s}(t)\mathsf{s}(a)=a\mathsf{s}(t)=t\in U,\]
    contradicting the fact $U$ is a proper filter.  This proves the claim, from which it follows that $0\notin UV\cup VU$.  As $\mathsf{r}(a)=aa^{-1}\in UV$ and $\mathsf{s}(a)=a^{-1}a\in VU$, it follows that $U\cdot V,V\cdot U\in\mathcal{U}(B)^0$ by \Cref{UltraUnit}.  Thus $V$ is an inverse of $U$, showing that $O_a\subseteq\mathcal{U}(B)^\times$.

    Conversely, take any $U\in\mathcal{U}(B)^\times$ so we have $U^{-1}\in\mathcal{U}(B)$ with $U\cdot U^{-1},U^{-1}\cdot U\in\mathcal{U}(B)^0$.  By \eqref{Units}, we must have $a,b\in U$ and $a',b'\in U^{-1}$ with $aa',b'b\in P$.  Taking any $c\in U$ with $c\leq a,b$ and $c'\in U^{-1}$ with $c'\leq a',b'$, we then have $cc',c'c\in P$.  Let $d=cc'c\in UU^{-1}U\subseteq U$ and $d'=c'cc'\in U^{-1}UU^{-1}\subseteq U^{-1}$.  Then $dc'c=cc'cc'c=cc'c=d$ and $dp=d$ implies $c'cp=c'cc'cp=c'dp=c'd=c'cc'c=c'c$, i.e.~$c'c\leq p$, showing that $\mathsf{s}(d)=c'c=c'cc'cc'c=d'd$.  Likewise, $\mathsf{s}(d')=dd'$ so $d\in U\cap B^\times$, as required.
\end{proof}

When $B$ is also a semi-Boolean algebra, products from the left automatically distribute over joins when they exist, i.e.~$a(b\vee c)=ab\vee ac$ when $b$ and $c$ are bounded, see
 \cite[Lemma 5.6]{Kudryavtseva2025}. 
 
If $B$ is a semi-Boolean algebra and products from the right also distribute over existing joins, i.e.
\[(a\vee b)c=ac\vee bc,\]
whenever $a$ and $b$ are bounded, then we call $B$ a \emph{semi-Boolean restriction semigroup}.
Recall that $\mathsf{S}$ and $\mathsf{R}$ are the source and range relations on $B \times P$, so $U^\mathsf{S} = \{p\in P\mathrel{|}ap=a \text{ for some } a \in U\}$ and $U^\mathsf{R} = \{p\in P\mathrel{|}pa=a \text{ for some } a \in U\}$ for $U \subseteq B$.

\begin{theorem}\label{AmpleSemi}
    If $B$ is a semi-Boolean restriction semigroup then the ultrafilters $\mathcal{U}(B)$ form a Hausdorff ample semicategory.
    The source unit of any $U\in\mathcal{U}(B)$ is given by $\mathsf{s}(U)=U^{\mathsf{S}\leq}$, while $U$ has a range unit precisely when $U^\mathsf{R}\neq\emptyset$, in which case $\mathsf{r}(U)=U^{\mathsf{R}\leq}$.
\end{theorem}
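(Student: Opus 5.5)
We already know from \Cref{UltraUnit} that $\mathcal{U}(B)$ is a semicategory, with $\mathsf{s}(U)=\mathsf{s}[U]^\leq$. Since $\mathsf{s}(a)\in a^{\mathsf{S}}$ and every $p\in a^{\mathsf{S}}$ satisfies $p\geq\mathsf{s}(a)$, we get $U^{\mathsf{S}\leq}=\mathsf{s}[U]^\leq$, which gives the source formula. The underlying poset $B$ is semi-Boolean, so \Cref{T1,0dim,LocallyCompact,StoneRep} show that $\mathcal{U}(B)$ is Hausdorff and $0$-dimensional, with compact open basis $(O_b)_{b\in B}$. The plan is to show, in order: (i) each $O_b$ is a section; (ii) $\mathsf{s}$ maps $O_b$ homeomorphically onto $O_{\mathsf{s}(b)}$; (iii) the range units behave as stated and $\mathcal{U}(B)^0\mathcal{U}(B)$ is open with $\mathsf{r}$ continuous; (iv) the product is continuous on $\mathcal{U}(B)^2$, and $\mathcal{U}(B)^2$ is closed.

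\textbf{Steps (i) and (ii).} By \Cref{FilterSource} and \Cref{UltraSource}, the map $U\mapsto\mathsf{s}[U]^\leq$ is a bijection from $O_b$ onto $O_{\mathsf{s}(b)}$, with inverse $V\mapsto(bV)^\leq$. Hence $O_b$ is a section. It is continuous because $\mathsf{s}[O_b\cap O_c]=O_{\mathsf{s}(b\wedge c)}$, using that $b\wedge c\leq b$. It is open because it sends basic sets $O_c$ with $c\leq b$ to $O_{\mathsf{s}(c)}$. Thus $\mathsf{s}$ is a local homeomorphism.

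\textbf{Step (iii).} If $U^{\mathsf{R}}\neq\emptyset$, pick $p$ with $pa=a\in U$. Then $U^{\mathsf{R}\leq}$ is a filter of projections, and we extend it to an ultrafilter $R$. For any $q\in R$ we have $qa\leq a$ and $qa\neq 0$, which gives $R\cdot U=U$. Properness of $R$ then forces $R=U^{\mathsf{R}\leq}$; the key point is to show that $U^{\mathsf{R}\leq}$ is already maximal. For this, take $q\notin U^{\mathsf{R}\leq}$ and split $a=qa\vee(p\setminus q)a$ using left distributivity. Primeness (\Cref{UltraPrime}) then gives $(p\setminus q)a\in U$, so $p\setminus q\in U^{\mathsf{R}}$ is disjoint from $q$, and \Cref{UltrafilterComplements} applies. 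Conversely, if $\mathsf{r}(U)$ exists then $\mathsf{r}(U)$ contains a projection $p$ by \eqref{Units}. Then $pU\subseteq U$, so $pu\in U$ and $p\wedge\ldots$ yields some $pu\leq u$ with $p(pu)=pu$, hence $U^{\mathsf{R}}\neq\emptyset$. It follows that $\mathcal{U}(B)^0\mathcal{U}(B)=\bigcup_{p\in P}O^{}_{\{pb\}}=\bigcup_{p,b}O_{pb}$ is open, and $\mathsf{r}^{-1}[O_q]\cap O_{pb}=O_{qpb}$, which gives continuity of $\mathsf{r}$.

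\textbf{Step (iv).} For continuity of the product, note that if $V\cdot W\in O_c$ then some $vw\leq c$ with $v\in V$ and $w\in W$. Hence $O_v\times O_w$ is a neighbourhood whose defined products lie in $O_{vw}\subseteq O_c$. Closedness of $\mathcal{U}(B)^2$ is the step I expect to be the main obstacle. Suppose $(V,W)$ is not composable. Then there are $v\in V$ and $w\in W$ with $vw=0$, so $O_v\times O_w$ is a neighbourhood of $(V,W)$ consisting entirely of non-composable pairs. This shows that the complement of $\mathcal{U}(B)^2$ is open. Together with compactness of the $O_b$, this makes $\mathcal{U}(B)$ an ample semicategory. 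The subtle parts are the range-unit maximality argument in (iii), which needs primeness and distributivity, and making sure that right distributivity is genuinely used wherever a product is split.
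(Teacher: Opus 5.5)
Your outline follows the paper's proof almost step for step. The source formula comes from \Cref{UltraUnit}, and the local homeomorphism from \Cref{FilterSource,UltraSource}. You use the same neighbourhood $O_v\times O_w$ for continuity of the product. Your open-complement argument for closedness of $\mathcal{U}(B)^2$ is the contrapositive of the paper's net argument. The main variation is in (iii). You get maximality of $U^{\mathsf{R}\leq}$ from \Cref{UltrafilterComplements} by exhibiting $p\setminus q\in U^{\mathsf{R}}$ disjoint from $q$. The paper instead proves $U^{\mathsf{R}\leq}$ is prime and invokes \Cref{UltraPrime}. The underlying computation (split $pa$, then use primeness of $U$) is the same. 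Your formula $\mathsf{r}^{-1}[O_q]\cap O_{pb}=O_{qpb}$ for $q\in P$ is a slightly more explicit continuity argument than the paper's; it holds because $q\in\mathsf{r}(U)$ gives $qpb\in(\mathsf{r}(U)U)^\leq=U$. A small correction in (ii): the identity $\mathsf{s}[O_b\cap O_c]=O_{\mathsf{s}(b\wedge c)}$ gives openness, while continuity comes from $O_q=\mathsf{s}[O_{bq}]$ for $q\leq\mathsf{s}(b)$.

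Step (iii) has two unsupported points as written.

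First, you never verify that $U^{\mathsf{R}\leq}$ is a filter, nor that $U^{\mathsf{R}\leq}\cdot U$ is defined. Your sentence ``for any $q\in R$ we have $qa\leq a$ and $qa\neq0$'' is made for an arbitrary ultrafilter $R$ extending $U^{\mathsf{R}\leq}$, where it is unjustified. The inequality $qa\leq a$ fails for non-projections $q\in R$, and $qa\neq0$ is only available once you know $R=U^{\mathsf{R}\leq}$. It also concerns only the single element $a$, whereas definedness of $R\cdot U$ needs $0\notin RU$. Both gaps are filled by the trick the paper uses. If $p\in U^{\mathsf{R}}$ with $pv=v\in U$ and $w\in U$, take $x\in U$ with $x\leq v,w$; then $px=pv\mathsf{s}(x)=v\mathsf{s}(x)=x$. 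This gives $p_1p_2x=x$ for $p_1,p_2\in U^{\mathsf{R}}$, which is closure under meets. It also gives $0\neq x=px\leq pw\leq qw$ whenever $p\leq q$. Hence $0\notin U^{\mathsf{R}\leq}U$, and in particular $U^{\mathsf{R}\leq}$ is proper. So drop the auxiliary $R$: prove maximality, then composability, then conclude by \Cref{UltraUnit}.

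Second, in your split $q$ need not be a projection. Write instead $a=pa=((p\wedge q)\vee(p\setminus q))a=(p\wedge q)a\vee(p\setminus q)a$. If $(p\wedge q)a\in U$, then $p\wedge q\in U^{\mathsf{R}}$, hence $q\in U^{\mathsf{R}\leq}$, a contradiction. This split is exactly the axiom $(x\vee y)c=xc\vee yc$ that defines a semi-Boolean restriction semigroup. It is not the automatic distributivity $a(b\vee c)=ab\vee ac$, so ``left distributivity'' in your text should read as the right-distributivity axiom, as your closing remark suggests.

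With these repairs your argument is complete.
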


\begin{proof}
    We saw in \Cref{UltraUnit} that $\mathcal{U}(B)$ is a semicategory such that the source unit of every $U\in\mathcal{U}(B)$ is $\mathsf{s}[U]^\leq$, which is the same as $U^{\mathsf{S}\leq}$, by the definition of source projections.  By \Cref{FilterSource} and \Cref{UltraSource}, for each $a\in B$, the source map $\mathsf{s}$ is then a bijection from $O_a$ onto $O_{\mathsf{s}(a)}$, showing that $\mathsf{s}$ is indeed a locally injective continuous open map.  To see the product is continuous, we show the preimage of $O_{c}$ is open for all $c \in B$.  Indeed, if $(U, V) \in \mathcal{U}(B)^2$ and $U \cdot V \in O_{c}$, then there are $a \in U$ and $b \in V$ such that $ab \leq c$, and so $(U, V) \in \mathcal{U}(B)^2 \cap (O_{a} \times O_{b})$, which is contained in the preimage of $O_{c}$ because $O_aO_b\subseteq O_{ab}\subseteq O_{c}$.

    Now we show that $U \in \mathcal{U}(B)$ has a range unit precisely when $U^\mathsf{R} \neq \emptyset$, in which case $\mathsf{r}(U) = U^{\mathsf{R}\leq}$.  Firstly, notice $0\notin U^{\mathsf{R}\leq}U$ in general.  Otherwise, we can take $p \in U^\mathsf{R}$ and $w \in U$ such that $pw = 0$ and $v \in U$ such that $pv = v$.  Then, there is some $x \in U$ such the $x \leq v, w$, and we have $x=v\mathsf{s}(x)=pv\mathsf{s}(x)=px\leq pw = 0$, contradicting the fact that $U$ is proper.  This also shows $0\notin U^{\mathsf{R}\leq}$.  Now suppose $U^\mathsf{R} \neq \emptyset$.  We show $U^{\mathsf{R}\leq}$ is an ultrafilter.  To see that it is a filter, suppose we have $p, q\in U^\mathsf{R}$ with $pv = v$ and $qw = w$ for some $v, w \in U$.  Taking $x \in U$ such that $x \leq v, w$, we have $pqx = pqw\mathsf{s}(x) = pw\mathsf{s}(x) = px = pv\mathsf{s}(x) = v\mathsf{s}(x) = x$, and hence $p\wedge q=pq\in U^\mathsf{R}$.  To see that it is even prime and hence an ultrafilter, by \Cref{UltraPrime}, take any bounded $a,b\in B$ with $a\vee b\in U^{\mathsf{R}\leq}$.  So we have $p\in U^\mathsf{R}$ with $p\leq a\vee b$, which means we have $u\in U$ with $u=pu=((p\wedge a)\vee(p\wedge b))u=(p\wedge a)u\vee(p\wedge b)u$.  As $U$ is prime, again by \Cref{UltraPrime}, it follows that $(p\wedge a)u\in U$ or $(p\wedge b)u\in U$ so $p\wedge a\in U^\mathsf{R}$ or $p\wedge b\in U^\mathsf{R}$ and hence $a\in U^{\mathsf{R}\leq}$ or $b\in U^{\mathsf{R}\leq}$, as required.  As $p\in U^\mathsf{R}\subseteq P$, it follows from \Cref{UltraUnit} that $U^{\mathsf{R}\leq}$ is a unit ultrafilter and hence the range unit of $U$, as $0\notin U^{\mathsf{R}\leq}U$.  Conversely, if $U$ has a range unit $W$, then there is some $p \in W \cap P$ by \Cref{UltraUnit}, and $pU \subseteq W \cdot U = U$.  Hence, taking any $a \in U$, we have $ppa = pa \in U$, so $p \in U^\mathsf{R} \neq \emptyset$.
    
    Now, whenever $U$ has a range unit, we can take $p \in U^\mathsf{R}$ with $pv = v$ for some $v \in U$, and it follows that $p\in V^\mathsf{R}$, for all $V\in O_v$, so $O_v$ is a neighbourhood of $U$ on which the range map $\mathsf{r}$ is always defined with image lying in $O_p$.  This shows that the range map is continuous and defined on an open subset of $\mathcal{U}(B)$.

    We have thus verified that $\mathcal{U}(B)$ is an \'etale semicategory.  As $B$ is a semi-Boolean algebra, $\mathcal{U}(B)$ is Hausdorff, by \Cref{T1,0dim}, and has a basis of compact open subsets, as noted in \Cref{StoneRep}.  It only remains to show that $\mathcal{U}(B)^2$ is closed in $\mathcal{U}(B)\times\mathcal{U}(B)$.  Accordingly, take nets $(V_\lambda),(W_\lambda)\subseteq\mathcal{U}(B)$ converging to $V$ and $W$ respectively with $0\notin V_\lambda W_\lambda$, for all $\lambda$.  If $v\in V$ and $w\in W$, i.e.~$V\in O_v$ and $W\in O_w$, then $V_\lambda\in O_v$ and $W_\lambda\in O_w$, for all sufficiently large $\lambda$.  Picking any such $\lambda$, this means that $v\in V_\lambda$ and $w\in W_\lambda$ so $vw\in V_\lambda W_\lambda\not\ni0$ and hence $vw\neq0$.  This shows that $0\notin VW$ so $(V,W)\in\mathcal{U}(B)^2$, as required.  Thus $\mathcal{U}(B)$ is indeed ample.
\end{proof}

We can thus represent $B$ on a Hausdorff ample semicategory as follows.
Recall from \Cref{SemiSlices} and \Cref{AmpleSemi} that the compact open sections of $\mathcal{U}(B)$ form a restriction semigroup relative to the compact open subsets of $\mathcal{U}(B)^0$.

\begin{theorem}\label{RSrep}
    The map $b\mapsto O_b$ is an isomorphism of $B$ onto a restriction semigroup formed from a basis of compact open sections of $\mathcal{U}(B)$.
\end{theorem}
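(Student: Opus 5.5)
The map $b\mapsto O_b$ needs four properties, and I would check them in this order: each $O_b$ is a compact open section; the map is injective and order-reflecting; it preserves products; and it sends projections to subsets of $\mathcal{U}(B)^0$ while preserving source projections. Once these hold, the image is a subsemigroup of the restriction semigroup of compact open sections from \Cref{SemiSlices}, and it is closed under $\mathsf{s}$. The map is then a restriction semigroup isomorphism onto its image.

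\textbf{Compact open sections and injectivity.} Throughout, $B$ is semi-Boolean, so \Cref{StoneRep} applies: $b\mapsto O_b$ is an order isomorphism onto a basis of compact open subsets of $\mathcal{U}(B)$. This already gives injectivity and shows the image is a basis. From the proof of \Cref{AmpleSemi}, $\mathsf{s}$ is a bijection of $O_b$ onto $O_{\mathsf{s}(b)}$, so each $O_b$ is a section. Hence $\mathsf{s}[O_b]=O_{\mathsf{s}(b)}$, which is exactly preservation of source projections. For $p\in P$, \eqref{Units} gives $O_p\subseteq\mathcal{U}(B)^0$, so projections go to projections. Conversely, if $O_b\subseteq\mathcal{U}(B)^0$ then $O_b=\mathsf{s}[O_b]=O_{\mathsf{s}(b)}$, and injectivity gives $b=\mathsf{s}(b)\in P$.

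\textbf{Products.} The main step is $O_aO_b=O_{ab}$. The inclusion $O_aO_b\subseteq O_{ab}$ is immediate: if $V\ni a$, $W\ni b$ and $V\cdot W$ is defined, then $ab\in VW\subseteq V\cdot W$.

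For the reverse inclusion, take $U\in O_{ab}$. Put $W=(b\,\mathsf{s}[U])^\leq$. By \Cref{UltraSource} and \Cref{FilterSource}, $\mathsf{s}[U]^\leq$ is an ultrafilter containing $\mathsf{s}(ab)\leq\mathsf{s}(b)$, hence it contains $\mathsf{s}(b)$, so $W$ is an ultrafilter containing $b$ with $\mathsf{s}(W)=\mathsf{s}(U)$.

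Next, let $V$ correspond to the unit ultrafilter $\mathsf{r}(W)$, or more concretely to the ultrafilter generated by $\mathsf{s}[bW]$-type projections, via \Cref{FilterSource} applied to $a$. Explicitly, set $V=(a\,G)^\leq$, where $G$ is the filter $\{p\in P\mid pb'=b' \text{ for some } b'\in W\}^\leq$. The main obstacle is showing that $G$ is an ultrafilter containing $\mathsf{s}(a)$, so that $W$ has a range unit. This is where the argument from the proof of \Cref{AmpleSemi} for $U^{\mathsf{R}}$ is reused. The key fact is $\mathsf{s}(a)b\in W$, which holds because $\mathsf{s}(\mathsf{s}(a)b)=\mathsf{s}(ab)\in\mathsf{s}[U]^\leq$ and $\mathsf{s}(a)b\leq b$, and $\mathsf{s}$ is injective on filters containing $b$ by \Cref{FilterSource}. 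This gives $\mathsf{s}(a)\in W^{\mathsf{R}}$, so $W^{\mathsf{R}}\neq\emptyset$. Then $\mathsf{r}(W)=W^{\mathsf{R}\leq}$ by \Cref{AmpleSemi}, and this unit contains $\mathsf{s}(a)$.

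Finally, \Cref{FilterSource} gives $V=(a\,W^{\mathsf{R}\leq})^\leq\in O_a$ with $\mathsf{s}(V)=\mathsf{r}(W)$. So $V\cdot W$ is defined and lies in $O_{ab}$. Since $V\cdot W$ and $U$ have the same source unit, both lie in the section $O_{ab}$, and therefore $V\cdot W=U$.
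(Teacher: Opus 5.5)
Your proposal is correct and follows essentially the same route as the paper. Both use \Cref{StoneRep} for the order isomorphism onto a basis, \Cref{FilterSource} and \Cref{UltraSource} to make each $O_b$ a section, and then $W$ built from $\mathsf{s}[U]$, $V=(a\,\mathsf{r}(W))^\leq$ and the section property of $O_{ab}$ to get $V\cdot W=U$. The differences are cosmetic: you build $W$ from $b$ and then check $\mathsf{s}(a)b\in W$ (immediate from $b\,\mathsf{s}(ab)=\mathsf{s}(a)b$), whereas the paper builds $W$ from $\mathsf{s}(a)b$ directly; you also verify explicitly that projections and source projections correspond, which the paper leaves implicit.
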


\begin{proof}
    By \Cref{StoneRep}, the map $b\mapsto O_b$ is an order isomorphism of $B$ onto a basis of compact open subsets of $\mathcal{U}(B)$.  By \Cref{FilterSource} and \Cref{UltraSource}, the source map on $\mathcal{U}(B)$ is a bijection from each $O_b$ onto $O_{\mathsf{s}(b)}$, i.e.~each $O_b$ is a section.  It only remains to verify that the product is preserved, i.e.~that $O_{ab}=O_aO_b$, for all $a,b\in B$.  Certainly $O_aO_b\subseteq O_{ab}$ so conversely let us take some $U\in O_{ab}$.  Then $\mathsf{s}(\mathsf{s}(a)b)=\mathsf{s}(ab)\in\mathsf{s}(U)$ so \Cref{FilterSource} yields $W=(\mathsf{s}(a)b\mathsf{s}(U))^\leq\in O_{\mathsf{s}(a)b}\subseteq O_b$ with $\mathsf{s}(W)=\mathsf{s}(U)$.  Then $\mathsf{s}(a)\in W^\mathsf{R}\subseteq\mathsf{r}(W)$ so \Cref{FilterSource} again yields $V=(a\mathsf{r}(W))^\leq\in O_a$ with $\mathsf{s}(V)=\mathsf{r}(W)$.  Thus $(V,W)\in\mathcal{U}(B)^2$ and $V\cdot W\in O_aO_b\subseteq O_{ab}$ with $\mathsf{s}(V\cdot W)=\mathsf{s}(W)=\mathsf{s}(U)$ and hence $V\cdot W=U$, as $O_{ab}$ is a section.  This shows that $O_{ab}\subseteq O_aO_b$.
\end{proof}

The next statement characterises when $\mathcal{U}(B)$ is a category.

\begin{proposition}\label{UltraCategory}
    The ultrafilters $\mathcal{U}(B)$ form a category precisely when every element of $B$ is a finite join of elements of $PB$.
\end{proposition}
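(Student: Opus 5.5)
By \Cref{AmpleSemi}, $\mathcal{U}(B)$ is a category precisely when every ultrafilter $U$ has a range unit, i.e.~precisely when $U^\mathsf{R}\neq\emptyset$ for all $U\in\mathcal{U}(B)$. Note that $U^\mathsf{R}\neq\emptyset$ iff $U$ contains an element of the form $pa$ with $pa=a$. Since $p(pa)=pa$, this in turn is equivalent to $U\cap PB\neq\emptyset$. So we reduce the claim to showing that $\mathcal{U}(B)=\bigcup_{c\in PB}O_c$ if and only if every $b\in B$ is a finite join of elements of $PB$.

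For the ``if'' direction, take $U\in\mathcal{U}(B)$ and any $b\in U$, and write $b=c_1\vee\cdots\vee c_n$ with $c_k\in PB$. All the $c_k$ are bounded by $b$, so the joins live in the Boolean algebra $b^\geq$. By \Cref{UltraPrime}, $U$ is prime, and induction on $n$ gives some $c_k\in U$. Hence $U\cap PB\neq\emptyset$.

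For the ``only if'' direction, fix $b\in B$ and consider the compact set $O_b$. Every $U\in O_b$ contains some $pa$, hence also $b\wedge pa\in U$. We claim $b\wedge pa\in PB$ once we write it suitably. Since $b\wedge pa\leq pa$, we have $b\wedge pa=pa\,\mathsf{s}(b\wedge pa)=p(a\,\mathsf{s}(b\wedge pa))\in PB$. More simply, $PB$ is downward closed: if $d\leq pa$ then $d=pa\,q=p(aq)$. Thus $O_b$ is covered by the sets $O_c$ with $c\in PB$ and $c\leq b$. By compactness (\Cref{LocallyCompact}, \Cref{StoneRep}), finitely many suffice, say $O_b=O_{c_1}\cup\cdots\cup O_{c_n}$. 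These $c_k$ are bounded by $b$, so $c=c_1\vee\cdots\vee c_n$ exists in $b^\geq$, and since $b\mapsto O_b$ preserves existing joins, $O_c=O_b$. \Cref{StoneRep} then gives $c=b$.

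The main subtlety is the reduction step: checking carefully that having a range unit is exactly $U^\mathsf{R}\neq\emptyset$, which \Cref{AmpleSemi} supplies, and that this matches $U\cap PB\neq\emptyset$ via the observation $p(pa)=pa$. A second point to watch is the degenerate case $b=0$, which should be read as the empty join; alternatively, $0=0\cdot0\in PB$ covers it since $0\in P$. Everything else is a compactness argument in the style of Stone duality.
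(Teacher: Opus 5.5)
Your proof is correct. The reduction to $U\cap PB\neq\emptyset$ and the ``if'' direction via primeness of ultrafilters (\Cref{UltraPrime}) are exactly the paper's.

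The converse follows a genuinely different route.

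\emph{Your argument.} You use that $PB$ is downward closed, so each $U\in O_b$ meets $PB\cap b^\geq$. Hence $O_b$ is covered by the sets $O_c$ with $c\in PB\cap b^\geq$. Compactness of $O_b$, together with join-preservation and injectivity of $b\mapsto O_b$ (\Cref{StoneRep}), then shows that $b$ is a finite join of such $c$.

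\emph{The paper's argument.} It argues by contraposition. If $a$ is not a finite join of elements of $PB$, then the order ideal $I$ generated by $Pa$ does not contain $a$. So $\{a\setminus b\mid b\in I\}$ generates a proper filter, which the Kuratowski--Zorn lemma extends to an ultrafilter $U\ni a$. Any $p\in U^{\mathsf{R}}$ would force both $pa\in U$ and $a\setminus pa\in U$, hence $0\in U$, so $U$ has no range unit.

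\emph{What each buys.} The paper's argument works purely with the order structure and exhibits an explicit ultrafilter with no range unit. Yours feeds the same Zorn-type input through the already established compactness and faithfulness of the Stone representation, so no new maximality argument is needed. It also makes the statement visibly topological: the open domain $\bigcup_{c\in PB}O_c$ of $\mathsf{r}$ contains the compact set $O_b$ exactly when $b$ is a finite join from $PB\cap b^\geq$.

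Both arguments carry over to \Cref{UltraGroupoid}. In your version, downward closure of $B^\times$ replaces that of $PB$.
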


\begin{proof}
    If every element of $B$ is a finite join of elements of $PB$ then, by \Cref{UltraPrime}, every $U\in\mathcal{U}(B)$ will contain $pb$, for some $p\in P$ and $b\in B$.  This means $p\in U^\mathsf{R}$ and hence $U$ has a range unit $U^{\mathsf{R}\leq}$, by \Cref{AmpleSemi}, showing that $\mathcal{U}(B)$ is a category.

    Conversely, say we have $a\in B$ that is not a join of elements of $PB$.  Thus the order ideal $I$ generated by $Pa$ does not contain $a$ so the set $a\setminus I=\{a\setminus b\mathrel{|}b\in I\}$ does not contain $0$ and hence neither does the filter $(a\setminus I)^\leq$. By the Kuratowski-Zorn lemma, we can extend $a\setminus I$ to an ultrafilter $U$. Thus $U\in\mathcal{U}(B)$ but $U$ can not have a range unit -- if it did then, taking any $p\in U^\mathsf{R}$, we would have $pa\in\mathsf{r}(U)U\subseteq U$ and thus $0 = pa \wedge (a \setminus pa) \in U$. This contradiction shows that $\mathcal{U}(B)$ is not a category.
\end{proof}

Likewise, we can characterise when $\mathcal{U}(B)$ is a groupoid.

\begin{proposition}\label{UltraGroupoid}
    The ultrafilters $\mathcal{U}(B)$ form a groupoid precisely when every element of $B$ is a finite join of invertible elements $B^\times$.
\end{proposition}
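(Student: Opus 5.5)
The plan is to mirror the proof of \Cref{UltraCategory}, with \Cref{UltraInvertible} playing the role that \eqref{Units} and \Cref{AmpleSemi} played there. Recall that a groupoid here means a semicategory in which every element is invertible. So the claim is that $\mathcal{U}(B)^\times=\mathcal{U}(B)$ exactly when every $a\in B$ is a finite join of elements of $B^\times$. By \eqref{Invertibles}, $\mathcal{U}(B)^\times$ consists of the ultrafilters meeting $B^\times$. The statement therefore reduces to an order-theoretic fact: every ultrafilter of $B$ meets $B^\times$ if and only if every element of $B$ is a finite join of invertibles.

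For the forward direction, suppose $a=b_1\vee\cdots\vee b_n$ with each $b_i\in B^\times$. These $b_i$ are bounded by $a$, so the iterated joins are taken in the Boolean algebra $a^\geq$. Take any $U\in\mathcal{U}(B)$ and any $a\in U$. By \Cref{UltraPrime}, $U$ is prime, and induction on $n$ gives some $b_i\in U$. Hence $U\cap B^\times\neq\emptyset$, so $U$ is invertible by \Cref{UltraInvertible}. Thus every element of $\mathcal{U}(B)$ is invertible, and $\mathcal{U}(B)$ is a groupoid.

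For the converse, suppose some $a\in B$ is not a finite join of invertibles. Let $I=a^\geq\cap B^\times$. By \eqref{IdealInvertibles} this is a down-set. I will also use the ideal $J$ of finite joins of elements of $I$, which all exist inside the Boolean algebra $a^\geq$. By hypothesis $a\notin J$.

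Consider the set $a\setminus J=\{a\setminus b\mathrel{|}b\in J\}$. It is closed under meets, since $(a\setminus b)\wedge(a\setminus c)=a\setminus(b\vee c)$ in $a^\geq$. It also avoids $0$, because $a\setminus b=0$ would force $a=b\in J$. So $(a\setminus J)^\leq$ is a proper filter, and by the Kuratowski–Zorn lemma it extends to an ultrafilter $U$.

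Suppose $U$ were invertible. Then \Cref{UltraInvertible} gives some $d\in U\cap B^\times$. Take $c\in U$ with $c\leq a,d$. By \eqref{IdealInvertibles}, $c\in B^\times$ and $c\leq a$, so $c\in I\subseteq J$. Hence $a\setminus c\in U$, and so $0=c\wedge(a\setminus c)\in U$, a contradiction. Therefore $U\notin\mathcal{U}(B)^\times$, and $\mathcal{U}(B)$ is not a groupoid.

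The main obstacle is the bookkeeping in this converse direction. In \Cref{UltraCategory} the paper works with the ideal generated by $Pa$, which is automatically compatible with the ultrafilter argument. Here one has to check by hand that the relevant order ideal of invertibles below $a$ is closed under the joins used and is downward closed, which is where \eqref{IdealInvertibles} enters, so that the filter $(a\setminus J)^\leq$ is proper. One also has to choose the element $c$ below both $a$ and $d$ correctly, so that invertibility passes down and $c$ actually lands in $J$.
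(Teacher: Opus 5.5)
Your proposal is correct and follows the paper's proof. The forward direction is the primeness argument from Proposition~\ref{UltraCategory} combined with Proposition~\ref{UltraInvertible}. The converse uses the ideal generated by $B^\times\cap a^\geq$ in place of the one generated by $Pa$, extends $(a\setminus J)^\leq$ to an ultrafilter, and reaches the contradiction $0=c\wedge(a\setminus c)\in U$ via \eqref{IdealInvertibles}. You spell out details the paper leaves implicit: that $a\setminus J$ is closed under meets and avoids $0$, and why $c$ lands in $J$. Note that downward closure of $J$ is not actually needed; the argument only uses $I\subseteq J$, closure of $J$ under binary joins, and the down-set property of $I$.
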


\begin{proof}
    This can be proved much like in the proof of \Cref{UltraCategory} but using \Cref{UltraInvertible}.  In particular, if there is some $a \in B$ that is not a join of elements of $B^\times$, then the proof is the same but with $B^\times \cap a^\geq$ instead of $Pa$.  Indeed, if there is a $b \in U \cap B^\times$ in the resulting ultrafilter $U$, then there is a $c \in U$ such that $c \leq a, b$.  Recall from \eqref{IdealInvertibles} that $B^{\times\geq} = B^\times$, so $c \in B^\times \cap a^\geq$, but then $0 = c \wedge (a \setminus c) \in U$.  Hence, $U \cap B^\times = \emptyset$, so $U$ is not invertible by \Cref{UltraInvertible}, meaning $\mathcal{U}(B)$ is not a groupoid.
\end{proof}

\section{Boolean-Cartan Restriction Semigroups}\label{s:BCrs}

In this section, we introduce the notion of a Boolean-Cartan restriction semigroup of an algebra and show the Steinberg algebra of any Hausdorff ample semicategory contains a Boolean-Cartan restriction semigroup (\Cref{Semicategory->BooleanCartan}). We then prove a reconstruction result that every algebra containing a Boolean-Cartan restriction semigroup is isomorphic to the Steinberg algebra of the associated ultrafilter semicategory (\Cref{SteinbergAlgebraRepresentation}).

\begin{definition}\label{def:BC}
    Assume $A$ is a $K$-algebra, for some ring $K$.  Further assume we have a multiplicative subsemigroup $B$ commuting with $K$ which generates $A$ and is a restriction semigroup with respect to a given subset of projections $P$.  We then say $B$ is \emph{Boolean-Cartan} if
    \begin{align}
        \tag{Torsion-Free}\label{TorsionFree}k\in K\setminus\{0\}\quad\text{and}\quad b\in B\setminus\{0\}\qquad&\Rightarrow\qquad kb\neq0,\\
        \tag{Subtractive}\label{Subtractive}p\leq q\in P\qquad&\Rightarrow\qquad q-p\in P,
    \end{align}
    and, moreover, we have an operation $\wedge:A\times B\rightarrow A$ that is linear in $A$ such that
    \[\tag{Meets}\label{Meets}a,b\in B\qquad\Rightarrow\qquad a\wedge b=\max(a^\geq\cap b^\geq).\]
\end{definition}

The examples we have in mind come from Hausdorff ample semicategories $S$.  For any ring $K$, recall that $KS$ denotes the $K$-algebra from \Cref{KSalg} of compactly supported locally constant $K$-valued functions under convolution.

\begin{proposition}\label{Semicategory->BooleanCartan}
    For any ring $K$ and Hausdorff ample semicategory $S$, the characteristic functions $\chi_O$ of compact open sections $O\subseteq S$ form a Boolean-Cartan semigroup in $KS$ with respect to the projections given by characteristic functions of compact open subsets of $S^0$.
\end{proposition}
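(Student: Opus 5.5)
Let $B=\{\chi_O \mathrel{|} O\subseteq S\text{ compact open section}\}$ and $P=\{\chi_E \mathrel{|} E\subseteq S^0\text{ compact open}\}$. I will check the conditions of \Cref{def:BC} one at a time, each time transporting to $B$ a property of the restriction semigroup of compact open sections from \Cref{SemiSlices}.

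\emph{Multiplicativity and the restriction structure.} For compact open sections $O,N$ I will show that $\chi_O\chi_N=\chi_{ON}$. At $a\in S$ the convolution counts factorisations $a=bc$ with $b\in O$ and $c\in N$. Since $N$ is a section, $c$ is determined by $\mathsf{s}(c)=\mathsf{s}(a)$. Since $O$ is a section, $b$ is then determined by $\mathsf{s}(b)=\mathsf{r}(c)$. So there is at most one factorisation, and the sum is $1$ precisely when $a\in ON$. Also $\chi_O=\chi_N$ forces $O=N$. Hence $O\mapsto\chi_O$ is an isomorphism of semigroups, and $B$ is a restriction semigroup relative to $P$ by \Cref{SemiSlices}, with $0=\chi_\emptyset$ as zero. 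The elements of $B$ commute with $K$ because their values are $0$ and $1$, which are central. The order on $B$ corresponds to inclusion of sections, since $\chi_O\leq\chi_N$ means $O=NE$ for some compact open $E\subseteq S^0$, and this holds exactly when $O\subseteq N$ (take $E=\mathsf{s}[O]$). \eqref{TorsionFree} is immediate: $k\chi_O$ takes the value $k\neq0$ on $O\neq\emptyset$. For \eqref{Subtractive}, if $E\subseteq F$ are compact open subsets of $S^0$, then $\chi_F-\chi_E=\chi_{F\setminus E}$, and $F\setminus E$ is compact open because $E$ is clopen ($S$ is Hausdorff).

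\emph{Generation.} I must show that every $f\in KS$ is a $K$-combination of elements of $B$. Since $f$ is locally constant, $\mathrm{supp}(f)$ is compact and clopen and is a finite disjoint union of sets $f^{-1}\{k\}\cap\mathrm{supp}(f)$, each compact open. Each compact open $C$ is covered by finitely many compact open sections $O_1,\dots,O_n$ (these form a basis, as $S$ is ample). I will disjointify them as $O_i\setminus\bigcup_{j<i}O_j$. These pieces are compact open because compact sets are closed in Hausdorff $S$, and subsets of sections are sections. This gives $\chi_C=\sum_i\chi_{O_i'}$, so $f=\sum k\chi_{O}$.

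\emph{Meets.} This is the main obstacle: I need an operation $\wedge:KS\times B\to KS$ that is linear in the first variable. I will define $f\wedge\chi_O=f\cdot 1_{O}$ as a pointwise product, i.e. the restriction of $f$ to $O$. It lies in $KS$ because $O$ is clopen (compact in a Hausdorff space, and open) and compact, and it is clearly additive and compatible with scalars on both sides. On $B$ it gives $\chi_O\wedge\chi_N=\chi_{O\cap N}$. The set $O\cap N$ is a compact open section, and under the inclusion order it is the largest section contained in both. Hence it equals $\max(\chi_O^\geq\cap\chi_N^\geq)$, which proves \eqref{Meets}. The point needing care is that $O\cap N$ is compact, which holds because $O$ is closed in $S$.
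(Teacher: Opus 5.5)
Your proposal is correct and follows essentially the same route as the paper. Both transport the restriction-semigroup structure from \Cref{SemiSlices} via $O\mapsto\chi_O$, write each $f\in KS$ as a combination of characteristic functions of disjoint compact open sections, use $\chi_F-\chi_E=\chi_{F\setminus E}$ for \eqref{Subtractive}, and take $\wedge$ to be the pointwise product. You also supply details the paper leaves implicit: the factorisation count giving $\chi_O\chi_N=\chi_{ON}$, the identification of the order on $B$ with inclusion, and the explicit disjointification. Your formula $\chi_{F\setminus E}$ also corrects the paper's typo $\chi_{O\setminus N}$.
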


\begin{proof}
    We denote the relevant sets of characteristic functions $\chi_O$ as follows.
    \begin{align*}
        B&=\{\chi_O\mathrel{|}O\subseteq S\text{ is a compact open section}\}.\\
        P&=\{\chi_O\mathrel{|}O\subseteq S^0\text{ is a compact open set}\}.
    \end{align*}
    First note $B$ commutes with $K$ and satisfies \eqref{TorsionFree} because, for any $k\in K$ and $O\subseteq S$, both $k\chi_O$ and $\chi_Ok$ are $k$ on $O$ and $0$ elsewhere.  Since $f$ is locally constant and compactly supported, there are $\{k_1, \dots, k_n\} \subseteq K \setminus \{0\}$ such that $\mathrm{supp}(f) = \bigcup_{j=1}^n f^{-1}\{k_j\}$.  Because $f^{-1}\{k_j\}$ is clopen for all $j$, by taking relative complements we attain pairwise disjoint compact open sets $N_j$ for all $j$ such that $\mathrm{supp}(f) = \bigcup_{j=1}^n N_j$ and $f$ is $k_j$ on $N_j$.  Thus, $f = \sum_{j=1}^n k_j\chi_{N_j}$.  As $S$ is Hausdorff and ample, every compact open subset of $S$ is a disjoint union of compact open sections.  In particular, this applies to $N_j$ so that $\chi_{N_j}$ is a finite sum of elements of $B$, for all $j$, from which it follows that $B$ generates $KS$.  Also $\chi_{ON}=\chi_O\chi_N$, for any compact open sections $O,N\subseteq S$, so $B$ is a restriction semigroup with respect to the projections $P$, by \Cref{SemiSlices}.  If $O,N\subseteq S^0$ are compact open and $O\subseteq N$ then $N\setminus O$ is also a compact open subset of $S^0$ with $\chi_N-\chi_O=\chi_{O\setminus N}$, i.e.~\eqref{Subtractive} also holds.  Finally, let $\wedge$ denote the pointwise product $KS\times B \to KS$, i.e.~for any $a\in KS$ and $b\in B$ we define $a\wedge b:S\rightarrow K$ by
    \[(a\wedge b)(s)=a(s)b(s).\]
    As $a$ and $b$ are locally constant, so is $a\wedge b$.  Also $\mathrm{supp}(a\wedge b)=\mathrm{supp}(a)\cap\mathrm{supp}(b)$ is a compact open section so $a\wedge b$ is indeed an element of $KS$.  Because $B$ commutes with $K$, $a\mapsto a\wedge b$ is (left- and right-)linear, for all $b\in B$.  That is, $KS \times B \to KS$ is linear in $KS$.  (Note that $KS \times B \to KS$ is right-linear in $B$ but can fail to be left-linear in $B$ because $KS$ may not commute with $K$.)  Also $\chi_O\wedge\chi_N=\chi_{O\cap N}$, for all $O,N\subseteq B$, so $\wedge$ is indeed the meet operation on $B$.  Thus $B$ is a Boolean-Cartan semigroup in $KS$.
\end{proof}

Above it would have sufficed to restrict to a basis of compact open sections such that $O\setminus N$ and $\mathsf{s}[O]$ lie in the basis whenever $O$ and $N$ do.  Our goal is to show these are, up to isomorphism, the only examples, and thus Steinberg algebras of Hausdorff ample semicategories are completely characterised by having Boolean-Cartan restriction subsemigroups.

To avoid repeating ourselves let us assume from now on that
\begin{center}
    \textbf{$K$ is a ring, $A$ is a $K$-algebra and $B\subseteq A$ is a Boolean-Cartan restriction semigroup with respect to some given projections $P\subseteq B$.}
\end{center}

The first thing to observe is that Boolean-Cartan semigroups are indeed semi-Boolean.

\begin{proposition}\label{SemiBooleanRestriction}
    The Boolean-Cartan $B$ is a semi-Boolean restriction semigroup with 
    \begin{equation}\label{SumsMeetsJoins}
        b+c=b\vee c\quad\text{and}\quad a\wedge(b+c)=a\wedge b+a\wedge c
    \end{equation}
    for all $a \in A$ and disjoint bounded $b,c\in B$.
\end{proposition}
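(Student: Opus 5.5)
The plan is to reduce everything to the projections below a single projection, where \eqref{Subtractive} gives us Boolean operations by ordinary arithmetic in $A$, and then transport this structure along the maps $p\mapsto bp$. By \eqref{Meets}, $B$ is a meet-semilattice. It has a minimum $0$: for any $p\in P$, \eqref{Subtractive} gives $0=p-p\in P$, and $0=b0\leq b$ for all $b\in B$. So it remains to show three things: each principal ideal $b^\geq$ is a Boolean algebra, right multiplication distributes over bounded joins, and \eqref{SumsMeetsJoins} holds. Left distributivity is then automatic by \cite[Lemma 5.6]{Kudryavtseva2025}.

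First I treat $q\in P$. Every element below $q$ lies in $P$, since $P^\geq=P$, and meets there are products $pr$. For $p\leq q$, \eqref{Subtractive} gives $q-p\in P$, and $p(q-p)=p-p=0$. For $p,r\leq q$, I would show $p+r-pr\in P$ as follows.
\begin{itemize}
\item Since $pr\leq r$, we get $r-pr\in P$.
\item Compute $(q-p)(r-pr)=r-pr-pr+pr=r-pr$, so $r-pr\leq q-p$.
\item Hence $(q-p)-(r-pr)\in P$ by \eqref{Subtractive}, and this element is $\leq q$.
\item Applying \eqref{Subtractive} once more gives $p+r-pr=q-\bigl((q-p)-(r-pr)\bigr)\in P$.
\end{itemize}
A direct computation shows this element is the least upper bound of $p,r$ in $q^\geq$: any $e\in P$ with $e\geq p,r$ satisfies $e(p+r-pr)=p+r-pr$. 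Likewise $q-p$ is the pseudocomplement of $p$ in $q^\geq$: if $rp=0$ then $r(q-p)=r$. Since $q-(q-p)=p$, pseudocomplementation is involutive, so \Cref{Byrne} makes $q^\geq$ a Boolean algebra.

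For general $b\in B$, I use the Ehresmann characterisation $a\leq b\Leftrightarrow a=b\mathsf{s}(a)$ together with $\mathsf{s}(bp)=\mathsf{s}(b)p$. These show that $p\mapsto bp$ is an order isomorphism from $\mathsf{s}(b)^\geq$ onto $b^\geq$, with inverse $\mathsf{s}$, so $b^\geq$ is Boolean as well. Hence $B$ is semi-Boolean, and joins in principal ideals are global joins, as observed in \Cref{SemiBooleanAlgebras}.

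For \eqref{SumsMeetsJoins}, suppose $b,c\leq d$ and $b\wedge c=0$. Then $b=d\mathsf{s}(b)$ and $c=d\mathsf{s}(c)$. By \eqref{ConditionalMeets}, $0=b\wedge c=b\mathsf{s}(c)$, so $\mathsf{s}(b)\mathsf{s}(c)=\mathsf{s}(b\mathsf{s}(c))=0$. The join formula above then gives $\mathsf{s}(b)\vee\mathsf{s}(c)=\mathsf{s}(b)+\mathsf{s}(c)$. Transporting through the isomorphism, $b\vee c=d(\mathsf{s}(b)+\mathsf{s}(c))=b+c$. The identity $a\wedge(b+c)=a\wedge b+a\wedge c$ is then immediate from the linearity of $\wedge$, using $b+c\in B$.

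For right distributivity, take bounded $b,c\leq d$ and any $e\in B$. Write $b\vee c=b'+c'$ with $b'=b$ and $c'=c\setminus b$, which are disjoint and bounded by $d$. Then $(b\vee c)e=b'e+c'e$. By \eqref{OrderPreservingRight}, $b'e,c'e\leq de$, and by \eqref{MeetPreservingRight}, $b'e\wedge c'e=(b'\wedge c')e=0$. So the sum $b'e+c'e$ is the join $b'e\vee c'e$. Finally, $be\leq b'e\vee c'e$ and $ce\leq c'e\vee be$, since $c'\leq c$ and $c\leq b\vee c'$. This yields $b'e\vee c'e=be\vee ce$.

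I expect the main obstacle to be the arithmetic showing that $P\cap q^\geq$ is closed under $p+r-pr$, i.e.\ extracting a join-semilattice structure from \eqref{Subtractive} alone. The rest is routine transport of structure along $\mathsf{s}$.
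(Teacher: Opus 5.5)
\textbf{The gap: the last identity is not justified.} You say $a\wedge(b+c)=a\wedge b+a\wedge c$ is ``immediate from the linearity of $\wedge$, using $b+c\in B$''. But \eqref{Meets} only supplies an operation $A\times B\to A$ that is linear in its \emph{first} argument. Nothing is assumed about additivity in the $B$-argument, and that additivity is exactly what this identity asserts. Knowing $b+c\in B$ only ensures the left-hand side is defined. So the second half of \eqref{SumsMeetsJoins} is left unproved, and that is the half used later, e.g.\ in \Cref{Expectation}.

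\textbf{How to close it.} Use linearity in the first argument, together with the fact that $B$ spans $A$, to reduce to $a\in B$. Now take disjoint $b,c\le d$. Then
\[
a\wedge(b+c)=a\wedge(b\vee c)=(a\wedge d)\wedge(b\vee c)=(a\wedge b)\vee(a\wedge c),
\]
where the last step is distributivity in the Boolean algebra $d^\geq$. Finally, $a\wedge b$ and $a\wedge c$ are disjoint and bounded by $d$, so your already-proved first half of \eqref{SumsMeetsJoins} turns this join into the sum $a\wedge b+a\wedge c$.

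\textbf{The rest is correct.} This covers the Boolean structure on principal ideals, $b+c=b\vee c$ for disjoint bounded $b,c$, and right distributivity. Your route is a mild variant of the paper's. You first make $q^\geq$ Boolean for projections $q$, with the explicit join $p+r-pr$, and then transport along the order isomorphism $p\mapsto bp$ from $\mathsf{s}(b)^\geq$ onto $b^\geq$. The paper instead works directly in $c^\geq$, using the pseudocomplement $c-b=c(\mathsf{s}(c)-\mathsf{s}(b))$, and obtains $a\vee b=c-((c-a)-b)=a+b$. Your right-distributivity argument is essentially the paper's.
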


\begin{proof}
    Certainly $B\neq\emptyset$, as $B$ spans $A$, and hence $P\neq\emptyset$ in order for elements of $B$ to have source projections.  Thus we can take $p\in P$ and note that $0=p-p\in P$, by \eqref{Subtractive}.  Thus $0=sp-sp = s(p-p) = s0$ for all $s\in B$ and $p\in P$, so $0$ is the minimum of $B$.  Also \eqref{Meets} means that $B$ is a meet-semilattice.

    Let $c \in B$.  We show $c^\geq$ is a Boolean algebra.  For any $b \leq c$, we have $c-b=c\mathsf{s}(c)-c\mathsf{s}(b)=c(\mathsf{s}(c)-\mathsf{s}(b)) \in BP \subseteq B$, using \eqref{Subtractive}.  Recall from \eqref{ConditionalMeets} that if $a,b\leq c$ then $a\wedge b=b\mathsf{s}(a)$.  In particular, $(c-b)\mathsf{s}(b) = c\mathsf{s}(b)-b\mathsf{s}(b)=b-b=0$, so $c-b \in b^\top$.  Moreover, if $a\wedge b=0$ then $(c-b)\mathsf{s}(a)=c\mathsf{s}(a)-b\mathsf{s}(a)=a$, and so $a\leq c-b$.  This shows that $c-b$ is a pseudocomplement of $b$ in $c^\geq$.  As $c-(c-b)=b$, it follows that $c^\geq$ is a Boolean algebra, by \Cref{Byrne}, showing that $B$ is indeed a semi-Boolean algebra. 
    
    Still assuming $a,b\leq c$ and $a\wedge b=0$, it follows that $a\vee b=c\setminus((c\setminus a)\setminus b)=c-((c-a)-b)=a+b$.  Also, $ad,bd\leq cd$ and $ad\wedge bd=(a\wedge b)d=0$ by \eqref{MeetPreservingRight}, so $ad\vee bd=ad+bd$.  Altogether, we have $(a\vee b)d=(a+b)d=ad+bd=ad\vee bd$.  Even when $a\wedge b\neq0$, we then see that $(a\vee b)d=(a\vee(b\setminus a))d=ad\vee(b\setminus a)d\leq ad\vee bd\leq(a\vee b)d$.  This shows that $B$ is indeed a semi-Boolean restriction semigroup.

    Lastly, assume $a\in A$ and $b,c\in B$ are disjoint and bounded.  Since $\wedge$ is linear in $A$ and $B$ spans $A$, it suffices to check $a\wedge(b+c)=a\wedge b+a\wedge c$ when $a \in B$, in which case $a\wedge(b+c)=a\wedge(b\vee c)=a\wedge b\vee a\wedge c=a\wedge b+a\wedge c$.
\end{proof}

The following is then immediate from \Cref{AmpleSemi}.

\begin{corollary}
    The ultrafilter semicategory $\mathcal{U}(B)$ is a Hausdorff ample semicategory.
\end{corollary}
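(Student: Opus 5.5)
This corollary is a direct application of \Cref{AmpleSemi}. That theorem states that the ultrafilters of any semi-Boolean restriction semigroup form a Hausdorff ample semicategory. The plan is therefore to check that the Boolean-Cartan $B$ satisfies the standing hypotheses of Section~\ref{s:6} and the hypothesis of \Cref{AmpleSemi}, and then to invoke that theorem.

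First, Section~\ref{s:6} assumes that $B$ is a restriction semigroup with projections $P$ containing a left zero $0$. The restriction semigroup structure relative to $P$ is part of \Cref{def:BC}. For the zero, I would use the first paragraph of the proof of \Cref{SemiBooleanRestriction}: since $B$ spans $A$, it is nonempty, so $P\neq\emptyset$. Taking any $p\in P$, \eqref{Subtractive} gives $0=p-p\in P\subseteq B$. Then for every $s\in B$ we get $s0=s(p-p)=sp-sp=0$, and likewise $0s=(p-p)s=0$ by distributivity in $A$. So $0$ is a (left and right) zero of $B$ lying in $P$, as required.

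Second, \Cref{SemiBooleanRestriction} shows that $B$ is a semi-Boolean restriction semigroup. That is, $B$ is a meet-semilattice whose principal ideals are Boolean algebras, and right multiplication distributes over existing joins. This is exactly the hypothesis of \Cref{AmpleSemi}, so $\mathcal{U}(B)$ is a Hausdorff ample semicategory.

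No step here is a genuine obstacle. The only point needing care is that the zero element used by Section~\ref{s:6} must genuinely belong to $B$ and act as a zero there. This is supplied by \eqref{Subtractive} together with the ring structure of $A$, as described above.
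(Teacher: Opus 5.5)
Your proposal is correct and follows the paper's route: \Cref{SemiBooleanRestriction} shows that $B$ is a semi-Boolean restriction semigroup, with $0=p-p\in P$ serving as the zero required by the standing assumption of Section~\ref{s:6}, and the corollary is then immediate from \Cref{AmpleSemi}. Your explicit check that $0s=(p-p)s=0$ makes $0$ a left zero is a small detail the paper leaves implicit (its proof only records $s0=0$), and it is correct.
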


As $A=\mathrm{span}(B)$, every $a\in A$ can be expressed as a finite linear combination of elements of $B$.  We can then use relative complements to turn it into a linear combination of disjoint elements of $B$, i.e.~$a=k_1b_1+\ldots+k_nb_n$ where $b_i\wedge b_j$ whenever $i\neq j$.  Such combinations will not be unique, but can still be used to define linear functionals from ultrafilters as follows.  Also note that $(a\wedge b)\wedge c=a\wedge (b\wedge c)$ holds for all $a,b,c\in B$ and thus extends even to $a\in A$, as $\wedge$ is linear in $A$ and $A=\mathrm{span}(B)$.  Thus we may just write $a\wedge b\wedge c$.
    
\begin{proposition}
    For each ultrafilter $U\subseteq B$, the characteristic function $\chi_U$ of $U$ on $B$ $($with $\chi_U[U]=\{1\}$ and $\chi_U[B\setminus U]=\{0\})$ extends uniquely to a linear map from $A$ to $K$.
\end{proposition}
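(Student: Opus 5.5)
Uniqueness is immediate: $A=\mathrm{span}(B)$, so a linear map $A\to K$ is determined by its values on $B$. The work is existence, i.e.\ showing that $\sum_i k_i\chi_U(b_i)$ depends only on $\sum_i k_ib_i$. Equivalently, we must show that whenever $\sum_{i=1}^n k_ib_i=0$ with $k_i\in K$ and $b_i\in B$, we have $\sum_{i\in I}k_i=0$, where $I=\{i\mid b_i\in U\}$. Then $\chi_U$ is well defined on $A$ by $\chi_U(\sum k_ib_i)=\sum k_i\chi_U(b_i)$, and it is automatically linear (left and right, since elements of $B$ commute with $K$).

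The plan is to use the operation $\wedge$, which is linear in $A$, to localise at a single element of $U$. Take a vanishing combination $\sum_i k_ib_i=0$. For each $i\notin I$, \Cref{UltrafilterComplements} gives some $u_i\in U$ with $b_i\wedge u_i=0$. Since $U$ is a filter closed under meets, we may pick $u\in U$ with $u\le u_i$ for all $i\notin I$ and also $u\le b_i$ for all $i\in I$ (take $u$ to be the meet of all these finitely many elements of $U$). Then $b_i\wedge u=u$ for $i\in I$, while for $i\notin I$ we get $b_i\wedge u\le b_i\wedge u_i=0$, so $b_i\wedge u=0$. Applying the linear map $a\mapsto a\wedge u$ to the relation, using \eqref{Meets} on $B$, gives
\[0=0\wedge u=\sum_i k_i(b_i\wedge u)=\Big(\sum_{i\in I}k_i\Big)u.\]
Here we also need that $0\wedge u$ equals the element $0$ of $A$, which holds by linearity of $\wedge$ in its first argument. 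Since $u\in U$ and $U$ is proper, $u\neq0$, so \eqref{TorsionFree} forces $\sum_{i\in I}k_i=0$, as required.

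A small point needs care: when passing scalars through $\wedge$, linearity in $A$ gives $(kb)\wedge u=k(b\wedge u)$. This is exactly what the definition provides, so no commutativity of $K$ is needed. There is also a subtlety in the case $I=\emptyset$, where the conclusion is trivial, so we may assume $I\neq\emptyset$. The disjoint-decomposition remark preceding the statement is not actually needed for this route. I expect the main (mild) obstacle to be ensuring the meets $b_i\wedge u$ are computed in $B$, so that \eqref{Meets} applies and yields exactly $u$ or $0$. This is handled by choosing $u$ below the relevant elements, as above.
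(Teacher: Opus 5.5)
Your proof is correct, but it is organised differently from the paper's.

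The paper first shows that every $a\in A$ satisfies $a\wedge u=ku$ for some $u\in U$ and $k\in K$. It does this by rewriting $a$ as a linear combination of pairwise disjoint elements of $B$ (via relative complements). It then uses primeness of ultrafilters (\Cref{UltraPrime}) to find some $u\wedge b_i\in U$. By \eqref{TorsionFree} the scalar $k$ is unique, and $\chi_U(a)$ is defined to be this $k$. Additivity and linearity are then checked separately by passing to a common lower bound $u\wedge u'$.

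You instead take the obvious extension $\sum k_ib_i\mapsto\sum_{b_i\in U}k_i$ and prove it is well defined. You test a vanishing combination against a single $u\in U$ lying below every $b_i\in U$ and below witnesses $u_i$ for $b_i\notin U$ supplied by \Cref{UltrafilterComplements}. This route needs neither the disjoint decomposition nor primeness of ultrafilters, and linearity (left and right, since $B$ commutes with $K$) comes for free.

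Nothing is lost relative to the paper. Running your computation on an arbitrary representation $a=\sum k_ib_i$, rather than a vanishing one, gives $a\wedge u=\chi_U(a)\,u$. So you also recover the paper's description of $\chi_U(a)$ as the unique scalar $k$ with $a\wedge u=ku$ for some $u\in U$.
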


\begin{proof}
    Uniqueness follows immediately from linearity and the fact $B$ spans $A$.  For existence we first claim that, for every $a\in A$, we have some $u\in U$ and $k\in K$ with $a\wedge u=ku$.  To see this, express $a$ as a disjoint linear combination of elements of $B$, say $a=k_1b_1+\ldots+k_nb_n$.  If we have $u\in U$ with $b_j\wedge u=0$, for all $j$, then $a\wedge u=\sum_{j=1}^nk_j(b_j\wedge u)=0=0u$ and we are done.  Otherwise, for any $u\in U$, we must have $\bigwedge_{j=1}^nu\setminus b_j\notin U$ and hence $u\setminus(\bigwedge_{j=1}^nu\setminus b_j)\in U$, as $U$ is an ultrafilter.  Replacing $u$ with this latter element if necessary, we may thus assume that $\bigwedge_{j=1}^nu\setminus b_j=0$ and hence $u=\bigvee_{j=1}^n u\wedge b_j\in U$.  As $U$ is an ultrafilter, we must then have $u\wedge b_i\in U$, for some $i$, and hence
    \[a\wedge u\wedge b_i=\left(\sum_{j=1}^nk_jb_j\right)\wedge u\wedge b_i=k_i(u\wedge b_i).\]
    This completes the proof of the claim.  Also note that the $k$ here is unique.  Indeed, if $a\wedge t=kt$ and $a\wedge u=lu$, for some $t,u\in U$, then $t\wedge u\in U$ and $k(t\wedge u)=a\wedge t\wedge u=l(t\wedge u)$ so $(k-l)(t\wedge u)=0$ and hence $k-l=0$, by \eqref{TorsionFree}, i.e.~$k=l$.

    Thus we have a unique function $f:A\rightarrow K$ such that, for every $a\in A$, we have some $u\in U$ with $a\wedge u=f(a)u$.  For any $k\in K$, note that then $ka\wedge u=k(a\wedge u)=kf(a)u$ and hence $f(ka)=kf(a)$.  On the other hand, for any other $a'\in A$, we have $u'\in U$ with $a\wedge u'=f(a')u'$.  Setting $t=u\wedge u'\in U$, it follows that
    \[a\wedge t=a\wedge u\wedge u'=f(a)u\wedge u'=f(a)(u\wedge u')=f(a)t.\]
    Likewise, $a'\wedge t=f(a')t$ and hence
    \[(a+a')\wedge t=a\wedge t+a'\wedge t=f(a)t+f(a')t=(f(a)+f(a'))t.\]
    This shows that $f(a+a')=f(a)+f(a')$, i.e.~$f$ is linear.  Lastly note that if $u\in U$ then $u\wedge u=u=1u$ so $f(u)=1$, i.e.~$f[U]=\{1\}$, while if $b\in B\setminus U$ then we have $u\in U$ with $b\wedge u=0=0u$ and hence $f(b)=0$, i.e.~$f[B\setminus U]=\{0\}$. Thus $f = \chi_U$ on $B$, as desired.
\end{proof}

Denote the linear extension above again by $\chi_U$.  Given $a\in A$, define $\hat{a}:\mathcal{U}(B)\rightarrow K$ by
\[\hat{a}(U)=\chi_U(a).\]
For any $b\in B$, note $\hat{b}$ is the characteristic function $\chi_{O_b}$ of $O_b\subseteq\mathcal{U}(B)$, as
\[\chi_{O_b}(U)=1\qquad\Leftrightarrow\qquad U\in O_b\qquad\Leftrightarrow\qquad b\in U\qquad\Leftrightarrow\qquad\hat{b}(U)=\chi_U(b)=1.\]

\begin{theorem}\label{SteinbergAlgebraRepresentation}
    The map $a\mapsto\hat{a}$ is an isomorphism of $A$ onto $K\mathcal{U}(B)$, the algebra of compactly supported locally constant $K$-valued functions on the ultrafilter semicategory $\mathcal{U}(B)$.
\end{theorem}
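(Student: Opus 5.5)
The plan is to verify, in turn, that $a\mapsto\hat a$ is well defined into $K\mathcal{U}(B)$, $K$-linear, multiplicative, injective and surjective. Linearity is immediate, since each $\chi_U$ is linear, so $\widehat{ka+a'}=k\hat a+\hat a'$ pointwise.

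For membership in $K\mathcal{U}(B)$, write $a=\sum_j k_jb_j$ with $b_j\in B$. Then $\hat a=\sum_j k_j\chi_{O_{b_j}}$. By \Cref{StoneRep} each $O_b$ is compact open, and it is also closed by \Cref{0dim}. Hence $\hat a$ is locally constant and compactly supported.

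Multiplicativity reduces to generators $b,c\in B$, by linearity and distributivity in both algebras. Here $\widehat{bc}=\chi_{O_{bc}}$. The convolution $\chi_{O_b}\chi_{O_c}$ evaluated at $U$ counts factorisations $U=V\cdot W$ with $V\in O_b$ and $W\in O_c$. Since $O_b,O_c$ are sections (\Cref{RSrep}), at most one such factorisation exists: $W$ is forced by $\mathsf{s}(W)=\mathsf{s}(U)$, and then $V$ is forced by $\mathsf{s}(V)=\mathsf{r}(W)$. Therefore $\chi_{O_b}\chi_{O_c}=\chi_{O_bO_c}=\chi_{O_{bc}}$, using \Cref{RSrep} again. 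Compatibility with scalars multiplying products uses that $B$ commutes with $K$.

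Surjectivity follows from \Cref{Semicategory->BooleanCartan}'s argument. Every $f\in K\mathcal{U}(B)$ is a $K$-combination of characteristic functions of compact open sections, and each compact open set is a finite union of basic sets $O_b$. Using relative complements in a finite bounded family, as in \Cref{StoneRep} with $O_b\setminus O_a=O_{b\setminus a}$, I will refine this to a disjoint union of sets $O_b$, so $f$ is a combination of the $\hat b$. Some care is needed because the $O_b$ may not be bounded by a common element. I plan to first intersect with each chosen basic set of the cover and then take successive differences $O_{b_i}\setminus(O_{b_1}\cup\dots\cup O_{b_{i-1}})=O_{b_i\setminus b_1\setminus\cdots\setminus b_{i-1}}$.

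The main obstacle is injectivity. Suppose $\hat a=0$, and write $a=\sum_j k_jb_j$. The goal is to make the $b_j$ pairwise disjoint and bounded, so that \eqref{SumsMeetsJoins} applies. First fix some $b_j$ and consider $a\wedge b_j\in A$, which by linearity is $\sum_i k_i(b_i\wedge b_j)$, a combination inside the Boolean algebra $b_j^\geq$. Inside a Boolean algebra I refine the finite family to its atoms $c_1,\dots,c_m$, which are pairwise disjoint, and by \eqref{SumsMeetsJoins} I get $a\wedge b_j=\sum_l \lambda_l c_l$. For nonzero $c_l$ there is an ultrafilter $U\ni c_l$ by Kuratowski--Zorn, and then $\chi_U(a\wedge b_j)=\lambda_l$. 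I also expect $\chi_U(a\wedge b_j)=\chi_U(a)$ for $U\ni b_j$, by checking on generators: $\chi_U(x\wedge b_j)=\chi_U(x)$ for $x\in B$, since $x\wedge b_j\in U$ iff $x\in U$. So $\lambda_l=\hat a(U)=0$ for all $l$, and hence $a\wedge b_j=0$ for each $j$. It then remains to show $a=0$. I plan to disjointify globally: replace $b_1,\dots,b_n$ by $b_1$, $b_2\setminus b_1$, and so on, together with the pieces $b_j\wedge b_i$. The identity $b=b\wedge b$ and the idempotence $a\wedge b=a$ for $a\in\mathrm{span}$ of pieces below $b$ should then give $a=\sum_j (a\wedge d_j)$ over a disjoint family $d_j$ covering the supports. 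This decomposition step, ensuring the global $a$ is recovered from its meets with disjoint pieces, is where I expect the most bookkeeping, and I would first try to prove it by induction on $n$.
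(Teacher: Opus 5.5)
Your proposal is correct. For linearity, multiplicativity (via $O_{bc}=O_bO_c$ from \Cref{RSrep}) and surjectivity (via the argument of \Cref{Semicategory->BooleanCartan} with the basis $(O_b)_{b\in B}$) it follows the paper's proof. You also check explicitly that $\hat a\in K\mathcal{U}(B)$, which the paper leaves implicit.

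The only real difference is injectivity. The paper writes a nonzero $a$ as $\sum_j k_jb_j$ with all $k_j\neq0$ and the $b_j$ nonzero and pairwise disjoint, obtained by relative complements as remarked before the construction of $\chi_U$. It then takes an ultrafilter $U\ni b_j$ and reads off $\hat a(U)=k_j\neq0$, since disjoint elements cannot lie in a common ultrafilter. Your two-stage argument is valid: first $a\wedge b_j=0$ via atoms of $b_j^\geq$, then $a=\sum_j a\wedge d_j$. However, the global step you flag as the main obstacle is exactly the disjointification the paper uses, and once you have it your first stage is superfluous.

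That global step also needs no induction. Take only $d_j=b_j\setminus b_1\setminus\cdots\setminus b_{j-1}$; adding the pieces $b_j\wedge b_i$ to the family would destroy disjointness. The sets $O_{d_j}$ are then pairwise disjoint with $\bigcup_{j\leq i}O_{d_j}=\bigcup_{j\leq i}O_{b_j}\supseteq O_{b_i}$. Hence \Cref{StoneRep} gives $b_i=\bigvee_{j\leq i}(b_i\wedge d_j)$ in the Boolean algebra $b_i^\geq$. By \eqref{SumsMeetsJoins} this equals $\sum_j b_i\wedge d_j$, since the terms with $j>i$ vanish. Summing with coefficients $k_i$ gives $a=\sum_j a\wedge d_j$. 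Refining each $d_j^\geq$-part into atoms, as in your first stage, then yields the paper's disjoint representation directly.
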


\begin{proof}
    As $\chi_U$ is linear, for all $U\in\mathcal{U}(B)$, so is the map $a\mapsto\hat{a}$.  To show $a\mapsto\hat{a}$ also preserves products it suffices to consider elements of $B$, as $B$ generates $A$.  By \Cref{RSrep}, $O_{ab}=O_aO_b$ and hence
    \[\widehat{ab}=\chi_{O_{ab}}=\chi_{O_aO_b}=\chi_{O_a}\chi_{O_b}=\hat{a}\hat{b},\]
    for any $a,b\in B$.  Thus products are preserved and $a\mapsto\hat{a}$ is an algebra homomorphism.
    
    Now any $a\in A\setminus\{0\}$ can be written as $a=\sum_{j=1}^nk_jb_j$ for non-zero $k_1,\ldots,k_n\in K$ and disjoint $b_1,\ldots,b_n\in B$, where $n\geq1$.  Taking any $U\in \mathcal{U}(B)$ containing some $b_j$, it follows that $\hat{a}(U)=k_j\neq0$, showing that $a\mapsto\hat{a}$ is injective.  On the other hand, using similar arguments as in the proof of \Cref{Semicategory->BooleanCartan} and the basis $(O_a)_{a\in B}$ identified in \Cref{RSrep}, every compactly supported locally constant function on $\mathcal{U}(B)$ can be expressed as a linear combination of $\hat{a}=\chi_{O_a}$ for $a \in B$, showing $a\mapsto\hat{a}$ is surjective.
\end{proof}

Steinberg algebras were originally defined in \cite{Steinberg2010,CFST14} over groupoids rather than the more general semicategories considered here.  \Cref{SteinbergAlgebraRepresentation} specialises to the groupoid case precisely when every element of $B$ is a finite join of elements of $B^\times$, by \Cref{UltraGroupoid}.  As these finite joins can be made disjoint and thus the same as finite sums, this means that $B^\times$ will also generate $A$ and hence be a Boolean-Cartan semigroup in its own right.  With such Boolean-Cartan inverse semigroups, we can replace the $\wedge$ operation on $A\times B$ with a single expectation $\Phi$ on $A$, more like in the usual theory of Cartan semigroups/subalgebras of Steinberg algebras and operator algebras -- see \cite{9a23} and \cite{BiceClarkYingFenMcCormick2025}.

\begin{proposition}\label{Expectation}
    Assume $I\subseteq A$ is an inverse semigroup spanning $A$ and commuting with $K$ satisfying \eqref{TorsionFree} and \eqref{Subtractive} with $I$ and its idempotents $E$ in place of $B$ and $P$ respectively.  Then $I$ is Boolean-Cartan precisely when we have a linear map $\Phi:A\rightarrow A$ such that, for all $b\in I$,
    \[\Phi(b)=\max(b^\geq\cap E).\]
\end{proposition}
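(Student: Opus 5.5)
For the forward direction, I will assume $I$ is Boolean-Cartan, with meet operation $\wedge:A\times I\rightarrow A$ linear in $A$, and construct $\Phi$. The natural candidate is \emph{$\Phi(a)=\sum_j a\wedge e_j$} for a suitable family of idempotents $e_j$. Since there is no global unit, I will define $\Phi$ on each $b\in I$ by
\[\Phi(b)=b\wedge\mathsf{s}(b),\]
and then extend linearly. First I check that this gives the right value on $I$. In an inverse semigroup, the idempotents below $b$ are exactly those $e\le b$ with $e\le \mathsf{s}(b)$: indeed $e\le b$ gives $e=b\mathsf{s}(e)$, and then $\mathsf{s}(e)=e\le\mathsf{s}(b)$ by \eqref{OrderPreservingSource}. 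Since $E^\geq=E$, every element of $b^\geq\cap\mathsf{s}(b)^\geq$ is idempotent. By \eqref{Meets} it follows that $b\wedge\mathsf{s}(b)=\max(b^\geq\cap E)$.

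The main obstacle is linearity. The formula $b\mapsto b\wedge\mathsf{s}(b)$ is not visibly additive, because $\mathsf{s}(b)$ depends on $b$. My plan is to show that $\sum_j k_jb_j\mapsto\sum_j k_j(b_j\wedge\mathsf{s}(b_j))$ is well defined, by going through the isomorphism of \Cref{SteinbergAlgebraRepresentation}. Under $a\mapsto\hat a$, each $b\wedge\mathsf{s}(b)$ corresponds to $\chi_{O_b\cap\mathcal U(I)^0}$, since $O_{b\wedge \mathsf{s}(b)}=O_b\cap O_{\mathsf{s}(b)}$ and $O_b\cap\mathcal U(I)^0=\bigcup_{e\in E}O_b\cap O_e$. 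This union reduces to the single set $O_b\cap O_{\mathsf{s}(b)}$, because any $e\le b$ lies below $\mathsf{s}(b)$. Hence I can define $\Phi$ on all of $A$ by $\widehat{\Phi(a)}=\hat a\cdot\chi_{\mathcal U(I)^0}$ (pointwise product). This is clearly linear. It lands in $K\mathcal U(I)$ because $\mathcal U(I)^0$ is clopen: it is open by \eqref{Units}, and closed by \Cref{ClosedUnits} together with \Cref{AmpleSemi}. It also agrees with the formula above on $I$.

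For the converse, I assume a linear $\Phi$ with $\Phi(b)=\max(b^\geq\cap E)$ and define
\[a\wedge c=\Phi(ac^{-1})\,c\qquad (a\in A,\ c\in I).\]
This is linear in $a$, since multiplication is bilinear and $c$ commutes with $K$. For $a,c\in I$ I must show $\Phi(ac^{-1})c=\max(a^\geq\cap c^\geq)$. Write $e=\Phi(ac^{-1})$, the largest idempotent below $ac^{-1}$. If $d\le a,c$, then $dd^{-1}=dc^{-1}\le ac^{-1}$, using \eqref{OrderPreservingRight} and $d=c\mathsf{s}(d)$. So $dd^{-1}\le e$, and $d=dd^{-1}c\le ec$. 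Conversely, $ec\le c$ by the restriction axiom, and $ec\le ac^{-1}c\le a$, again using \eqref{OrderPreservingRight} and $c^{-1}c=\mathsf{s}(c)$. Hence $ec$ is the maximum of $a^\geq\cap c^\geq$, i.e.\ $a\wedge c=ec$, which gives \eqref{Meets}.

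The remaining hypotheses of \Cref{def:BC} (generation, commuting with $K$, \eqref{TorsionFree}, \eqref{Subtractive}) are assumed outright, so $I$ is Boolean-Cartan. In the converse I expect only routine care with one-sided order laws in the inverse semigroup. The real difficulty is well-definedness in the forward direction, which I plan to handle through the representation theorem as described.
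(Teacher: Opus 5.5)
Your proposal is correct. For the direction from a Boolean-Cartan $I$ to $\Phi$, it takes a genuinely different route from the paper.

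\textbf{Boolean-Cartan implies $\Phi$.} The paper stays inside $A$. For each finite $F\subseteq E$ it defines $\Phi_F(a)$ as the sum of $a\wedge c$ over the pairwise disjoint pieces $c=\bigwedge_{f\in F\setminus G,g\in G}f\setminus g$ into which $F$ cuts; this is an algebraic stand-in for ``$a\wedge\bigvee F$'', since $E$ need not have joins. Using \eqref{SumsMeetsJoins} and the vanishing $b\wedge(f\setminus\mathsf{s}(b))=0$ for $f\in E$, it shows that $\Phi_F(a)$ stabilises once $F\supseteq\mathsf{s}[C]$ for a finite $C\subseteq I$ spanning $a$, and it takes the limit.

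You instead transport everything to $K\mathcal{U}(I)$ via \Cref{SteinbergAlgebraRepresentation}. There $\Phi$ becomes multiplication by $\chi_{\mathcal{U}(I)^0}$, the usual expectation onto functions supported on units. Your identity $O_b\cap\mathcal{U}(I)^0=O_{b\wedge\mathsf{s}(b)}$ plays the role of the paper's vanishing observation.

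What each route buys:
\begin{itemize}
\item Yours is shorter and makes clear what $\Phi$ is. However, it rests on the whole ultrafilter representation: Zorn's lemma, \Cref{AmpleSemi}, and the injectivity and surjectivity of $a\mapsto\hat a$.
\item The paper's is self-contained. It gives the explicit intrinsic formula $\Phi(a)=\Phi_{\mathsf{s}[C]}(a)$, or simply $a\wedge 1$ in the monoid case.
\end{itemize}

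\textbf{A simplification.} You do not actually need \Cref{ClosedUnits} or the clopenness of $\mathcal{U}(I)^0$. Writing $a=\sum_jk_jb_j$ with $b_j\in I$, your identity already gives $\hat a\,\chi_{\mathcal{U}(I)^0}=\sum_jk_j\widehat{b_j\wedge\mathsf{s}(b_j)}\in K\mathcal{U}(I)$. By injectivity of $a\mapsto\hat a$, this also proves directly that $\sum_jk_jb_j\mapsto\sum_jk_j(b_j\wedge\mathsf{s}(b_j))$ is well defined.

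\textbf{$\Phi$ implies Boolean-Cartan.} You use the same formula $a\wedge c=\Phi(ac^{-1})c$ as the paper. You verify \eqref{Meets} by hand, where the paper cites Leech and Lawson on fixed point operators. Your verification is fine: it uses $dc^{-1}=dd^{-1}$ and $d=dd^{-1}c$ for $d\le c$. Note that right-linearity in $a$ also uses that $c^{-1}\in I$ commutes with $K$.
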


\begin{proof}
    Given such a linear map $\Phi$ on $A$, we define $\wedge$ on $A\times I$ by
    \[a\wedge b=\Phi(ab^{-1})b.\]
    Because $\Phi$ is linear and $I$ commutes with $K$, $\wedge$ is linear in $A$.
    This $\wedge$ operation coincides with the fixed point operator of $I$ in the sense of \cite{Leech1995} for inverse monoids or \cite{Lawson2023} for general inverse semigroups.  Hence, $\Phi$ satisfies \eqref{Meets} by \cite[Theorem 1.9(a)]{Leech1995} or \cite[Lemma 3.2(4)]{Lawson2023}.

    Conversely, say we have an operation $\wedge:A\times I\to A$ that is linear in $A$ and satisfies \eqref{Meets}.  If $I$ is a monoid with unit $1$ then we can simply define $\Phi(a)=a\wedge 1$.  In general we proceed as follows.  For every finite $F\subseteq E$, let us define $\Phi_F:A\rightarrow K$ by
    \[\Phi_F(a)=\sum_{G\subsetneqq F}a\wedge\bigwedge_{f\in F\setminus G,g\in G}f\setminus g\]
    (where we take $\bigwedge_{f\in F\setminus G,g\in G}f\setminus g$ to mean $\bigwedge_{f\in F}f$ when $G=\emptyset$).  We aim to show the net $(\Phi_F)$, indexed by finite $F\subseteq E$ ordered by inclusion, converges pointwise, i.e.~for all $a \in A$, the net $(\Phi_F(a))$ is eventually constant with respect to $F$.  Fix $a \in A$.  As $I$ spans $A$, we can write $a$ as a linear combination $a=\sum_{j\leq m}k_jb_j$ of elements of $I$.  Write $C=\{b_1,\ldots,b_n\}$.  We claim $\Phi_F(a)=\Phi_{\mathsf{s}[C]}(a)$ for all finite $F\subseteq E$ containing $\mathsf{s}[C]$ (in particular, this means the limit is independent of the chosen $C$ spanning $a$).  It suffices to show $\Phi_F(a)=\Phi_{F\setminus\{f\}}(a)$ for all finite $F\subseteq E$ containing $\mathsf{s}[C]$ and for all $f \in F\setminus\mathsf{s}[C]$.  Fix such an $F$ and $f \in F\setminus\mathsf{s}[C]$, and write $F=\{e_1,\ldots,e_n,f\}$.  First we identify a redundant summand in $\Phi_F(a)$.  The summand of $\Phi_F(a)$ corresponding to $\{e_1,\ldots,e_n\}\subsetneqq F$ is $a\wedge \bigwedge_{i\leq n}f\setminus e_i=\sum_{j\leq m}k_j(b_j\wedge\bigwedge_{i\leq n}f\setminus e_i)$ because $\wedge$ is linear in $A$.  For each $j\leq m$, $\mathsf{s}(b_j)\in\mathsf{s}[C]\subseteq\{e_1,\ldots,e_n\}$, so $b_j\wedge\bigwedge_{i\leq n}f\setminus e_i\leq b_j\wedge f\setminus \mathsf{s}(b_j)$.  Because $f\in E$, $b_j\wedge f\setminus \mathsf{s}(b_j)=0$.  Hence, the summand of $\Phi_F(a)$ corresponding to $\{e_1,\ldots,e_n\}\subsetneqq F$ is $0$.  Now, each proper subset $G$ of $\{e_1,\ldots,e_n\}$ corresponds with the pair $(G,G\cup\{f\})$ of proper subsets of $F$, which index the remaining summands of $\Phi_{\{e_1,\ldots,e_n\}}(a)$ and $\Phi_F(a)$, respectively.  Thus, given $G\subsetneqq\{e_1,\ldots,e_n\}$, we just need
    \[a\wedge\bigwedge_{e\in\{e_1,\ldots,e_n\}\setminus G,g\in G}e\setminus g=\left(a\wedge\bigwedge_{e\in F\setminus G,g\in G}e\setminus g\right)+\left(a\wedge\bigwedge_{e\in F\setminus G\cup\{f\},g\in G\cup\{f\}}e\setminus g\right),\]
    which follows from \eqref{SumsMeetsJoins}, and so the claim holds.

    Thus $(\Phi_F)$ converges pointwise.  The limit $\Phi:A\rightarrow K$ satisfies $\Phi(a)=\Phi_{\mathsf{s}[C]}(a)$ for any finite $C\subseteq I$ with $a\in\mathrm{span}(C)$.  It follows that $\Phi$ is linear.  Moreover, for all $b\in I$, one sees that $\Phi(b)=\Phi_{\{\mathsf{s}(b)\}}(b)=b\wedge\mathsf{s}(b)=\max\{p\in E\mathrel{|}p\leq b\}$, as required.
\end{proof}

As an example, one could take $S$ to be the one element groupoid, in which case $KS$ can be identified with $K$ viewed as algebra over itself.  Then $B$ and $P$ above get identified with the subsemigroup $\{0,1\}$ of $K$.  Note that the larger subsemigroup $\{-1,0,1\}$ would still satisfy all the required conditions except for \eqref{Meets}, as
\[(-1)\wedge1=0\neq-1=-(1\wedge 1),\]
showing that the meet operation on $\{-1,0,1\}$ does not extend linearly to the algebra $K$.

Specialising even further, we recover a classic result of Keimel from \cite{Keimel1970}.  Our work thus constrasts with previous reconstruction results, e.g.~from \cite{9a23}, which usually assume Keimel's result from the outset and build up from there.

\begin{proposition}\label{Keimel}
    If the idempotents $E$ of $A$ span $A$, commute with each other and $K$ and satisfy \eqref{TorsionFree} then $A$ is isomorphic to the algebra of compactly supported locally constant $K$-valued functions on a locally compact Hausdorff $0$-dimensional space.
\end{proposition}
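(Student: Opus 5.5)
The plan is to apply \Cref{SteinbergAlgebraRepresentation} with $B=P=E$ and then observe that the resulting ultrafilter semicategory consists entirely of units. That makes its Steinberg algebra just the algebra of compactly supported locally constant functions on a space, under pointwise product.

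\textbf{Step 1: $E$ is a restriction semigroup.} As the idempotents commute, $E$ is closed under multiplication, since $(ef)^2=efef=eeff=ef$. So $E$ is a commutative idempotent subsemigroup of $A$. It contains $0$, and it commutes with $K$ by hypothesis. Taking the projections to be $P=E$, every $e\in E$ has source projection $\mathsf{s}(e)=e$. The Ehresmann identity $\mathsf{s}(ef)=ef=\mathsf{s}(\mathsf{s}(e)f)$ holds, and $pe\leq e$ holds trivially by commutativity. Hence $E$ is a restriction semigroup relative to $E$.

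\textbf{Step 2: the Boolean-Cartan axioms.} We verify each condition of \Cref{def:BC} in turn.
\begin{itemize}
\item \eqref{TorsionFree} is assumed.
\item $E$ spans, and hence generates, $A$.
\item For \eqref{Subtractive}: if $p\leq q$ then $p=qp=pq$, so $(q-p)^2=q-qp-pq+p=q-p$, i.e.~$q-p\in E$.
\item For \eqref{Meets}, define $a\wedge e=ae$ for $a\in A$, $e\in E$. This lies in $A$ and is linear in $a$, using the bimodule and associativity axioms, e.g.~$(ka)e=k(ae)$. For $e,f\in E$ we have $ef\leq e,f$. Conversely, if $g\leq e,f$ then $g=eg=fg$, so $g=efg$, i.e.~$g\leq ef$. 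Thus $ef=\max(e^\geq\cap f^\geq)$.
\end{itemize}
So $E$ is Boolean-Cartan in $A$, and \Cref{SteinbergAlgebraRepresentation} gives an isomorphism $a\mapsto\hat a$ of $A$ onto $K\mathcal{U}(E)$.

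\textbf{Step 3: identify $K\mathcal{U}(E)$.} Every ultrafilter $U$ of $E$ meets $P=E$. So by \eqref{Units} in \Cref{UltraUnit}, $\mathcal{U}(E)=\mathcal{U}(E)^0$, i.e.~every element is a unit. Products in a semicategory are only defined between an element and its source unit, so the only defined products are $UU=U$. Consequently the convolution $(fg)(U)=\sum_{U=VW}f(V)g(W)$ reduces to $f(U)g(U)$, the pointwise product. Finally, $E$ is a semi-Boolean algebra by \Cref{SemiBooleanRestriction}, so $\mathcal{U}(E)$ is locally compact by \Cref{LocallyCompact}, $T_1$ by \Cref{T1}, and $0$-dimensional (hence Hausdorff) by \Cref{0dim}. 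This gives the stated conclusion.

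The only step needing care is Step 3: confirming that the semicategory structure on $\mathcal{U}(E)$ is trivial, so that the convolution really is the pointwise product. The rest is direct verification of the hypotheses of \Cref{SteinbergAlgebraRepresentation}.
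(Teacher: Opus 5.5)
Your proposal is correct and follows essentially the same route as the paper. You check \eqref{Subtractive} via $(q-p)^2=q-p$ and \eqref{Meets} via $a\wedge e=ae$, so that $E$ is a Boolean-Cartan semigroup consisting entirely of projections; you then use \eqref{Units} to see that $\mathcal{U}(E)$ has trivial product and conclude with \Cref{SteinbergAlgebraRepresentation} and \Cref{T1,0dim,LocallyCompact}, just as the paper does, while additionally making explicit the restriction-semigroup structure of $E$ and the reduction of convolution to the pointwise product, which the paper leaves implicit.
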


\begin{proof}
    If $p,q\in E$ and $p\leq q$ then $(q-p)(q-p)=q-p-p+p=q-p$ so $q-p\in E$ too, showing $E$ also satisfies \eqref{Subtractive}.  We can also just take $\wedge:A\times E\rightarrow A$ to be the product, i.e.~$a\wedge p=ap$, so that $\wedge$ is linear in $A$ and satisfies \eqref{Meets}.  Thus $E$ is a Boolean-Cartan semigroup consisting entirely of projections.  By \eqref{Units}, every ultrafilter is then a unit so $\mathcal{U}(E)$ has a trivial product.  Thus $A$ is isomorphic to the algebra of compactly supported locally constant $K$-valued functions on the ultrafilter space $\mathcal{U}(E)$, by \Cref{SteinbergAlgebraRepresentation}, which is locally compact, Hausdorff and $0$-dimensional, by \Cref{T1,0dim,LocallyCompact}.
\end{proof}

\section{Semigroup Algebra Quotients}\label{s:8}

Here we show how to view an algebra with Boolean-Cartan semigroup $B$ as a quotient of the semigroup algebra of $B$.  First let us assume throughout this section that
\begin{center}
    \textbf{$K$ is a ring and $B$ is a semi-Boolean restriction semigroup with projections $P$.}
\end{center}

Letting $B_0=B\setminus\{0\}$, note that the finitely supported functions from $B_0$ to $K$ form a $K$-bimodule under the usual pointwise operations.  For all $a,b\in B_0$, define
\[\delta_b(a)=\begin{cases}1&\text{if }a=b\\0&\text{otherwise}.\end{cases}\]
We also define $\delta_0(a)=0$, for all $a\in B_0$.  We turn the space of finitely supported $K$-valued functions on $B_0$ into an algebra $KB_0$ by specifying that, for all $a,b\in B_0$,
\[\delta_a\delta_b=\delta_{ab}.\]
If $K$ is commutative, $KB_0$ coincides with the contracted semigroup algebra $K_0B$ in \cite[\S2.4]{SteinbergSzakacs2021}.

As $a=0$ precisely when $\mathsf{s}(a)=0$, and $\mathsf{s}(a\mathsf{s}(b))=\mathsf{s}(\mathsf{s}(a)\mathsf{s}(b))=\mathsf{s}(\mathsf{s}(b)\mathsf{s}(a))=\mathsf{s}(b\mathsf{s}(a))$, so
\[\tag{Orthogonality}a\mathsf{s}(b)=0\qquad\Leftrightarrow\qquad\mathsf{s}(a)\mathsf{s}(b)=0\qquad\Leftrightarrow\qquad b\mathsf{s}(a)=0.\]
In this case we call $a,b\in B$ \emph{orthogonal}.  Any orthogonal $a,b\in B$ are also disjoint.  Indeed, if $c\leq a,b$, then $c=a\mathsf{s}(c)\leq a\mathsf{s}(b)=0$.

\begin{proposition}
    We have an ideal in $KB_0$ defined by
    \[I=\mathrm{span}\{\delta_a+\delta_b-\delta_{a\vee b}\mathrel{|}a,b\in B_0\text{ are orthogonal and bounded}\}.\]
\end{proposition}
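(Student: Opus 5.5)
I will show that $I$ is a two-sided ideal by checking that it is a $K$-sub-bimodule and that it is closed under multiplication on either side by the spanning elements $\delta_c$, $c\in B_0$. The span is taken over $K$ on both sides, so $I$ is automatically a sub-bimodule. The elements $\delta_c$ span $KB_0$ and commute with scalars. So it suffices to show that, for orthogonal bounded $a,b\in B_0$ and any $c\in B_0$, both
\[x=\delta_c\delta_a+\delta_c\delta_b-\delta_c\delta_{a\vee b}\qquad\text{and}\qquad y=\delta_a\delta_c+\delta_b\delta_c-\delta_{a\vee b}\delta_c\]
lie in $I$. By definition $x=\delta_{ca}+\delta_{cb}-\delta_{c(a\vee b)}$, where $\delta_0=0$, and similarly for $y$.

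For left multiplication, $c(a\vee b)=ca\vee cb$ by left distributivity (\cite[Lemma 5.6]{Kudryavtseva2025}). Both $ca$ and $cb$ are bounded by $c(a\vee b)$, by \eqref{OrderPreservingLeft}. For orthogonality, $\mathsf{s}(ca)\leq\mathsf{s}(a)$, since $ca\,\mathsf{s}(a)=ca$; likewise $\mathsf{s}(cb)\leq\mathsf{s}(b)$. Hence $\mathsf{s}(ca)\mathsf{s}(cb)\leq\mathsf{s}(a)\mathsf{s}(b)=0$, so $ca$ and $cb$ are orthogonal.

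For right multiplication, $(a\vee b)c=ac\vee bc$ is exactly the defining right distributivity of a semi-Boolean restriction semigroup. Boundedness follows from \eqref{OrderPreservingRight}. For orthogonality, use \eqref{ConditionalMeets}: $a\mathsf{s}(b)=0$ gives $a\wedge b=0$, and then \eqref{MeetPreservingRight} gives $ac\wedge bc=(a\wedge b)c=0$. I then need to pass from disjointness to orthogonality for $ac,bc\leq(a\vee b)c$. For elements bounded by a common $d$, \eqref{ConditionalMeets} gives $u\wedge v=u\mathsf{s}(v)$, so disjoint bounded elements are orthogonal. Left products work the same way, since $ca\wedge cb=ca\,\mathsf{s}(cb)$.

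It remains to handle zero terms. Suppose $u,v$ are orthogonal and bounded, and let $w=u\vee v$. If $u,v\neq0$, then $\delta_u+\delta_v-\delta_w$ is a generator of $I$. If $u=0$, then $w=v$ and the expression is $\delta_v-\delta_v=0$; the case $v=0$ is symmetric. If both are zero, the expression is $0$. In every case $x,y\in I$.

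The main obstacle is ensuring orthogonality survives right multiplication, which I handle via \eqref{MeetPreservingRight} together with the conditional-meet formula, and then the bookkeeping of zero terms.
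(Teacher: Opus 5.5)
Your proof is correct and matches the paper's argument: multiply a generator by $\delta_c$ on each side, use left and right distributivity over bounded joins, and check that orthogonality is preserved. The only difference is that the paper gets right-hand orthogonality in one line from the restriction identity, $ac\,\mathsf{s}(bc)=a\,\mathsf{s}(b)c=0$, instead of going through \eqref{MeetPreservingRight} and \eqref{ConditionalMeets}; the paper also leaves implicit the zero-term bookkeeping that you spell out.
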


\begin{proof}
    For all orthogonal bounded $a,b\in B_0$ and $c\in B_0$, note $(\delta_a+\delta_b-\delta_{a\vee b})\delta_c=\delta_{ac}+\delta_{bc}-\delta_{ac\vee bc}\in I$, as $ac\mathsf{s}(bc)=a\mathsf{s}(b)c=0$.  Similarly, $\delta_c(\delta_a+\delta_b-\delta_{a\vee b})=\delta_{ca}+\delta_{cb}-\delta_{ca\vee cb}\in I$ because $ca\mathsf{s}(cb)\leq ca\mathsf{s}(b)=0$.  As the $(\delta_c)_{c\in B_0}$ span $KB_0$, this shows that $I$ is an ideal.
\end{proof}

Next note we have a map $\wedge:KB_0\times B\rightarrow KB_0$, which is linear in $KB_0$, defined by
\[\delta_a\wedge b=\delta_{a\wedge b}.\]
We then immediately see that, for all orthogonal bounded $a,b\in B_0$ and $c\in B$,
\[(\delta_a+\delta_b-\delta_{a\vee b})\wedge c=\delta_{a\wedge c}+\delta_{b\wedge c}-\delta_{(a\vee b)\wedge c}=\delta_{a\wedge c}+\delta_{b\wedge c}-\delta_{(a\wedge c)\vee(b\wedge c)}\in I.\]
Thus $\wedge$ induces a map on $KB_0/I\times B$, which we again denote by $\wedge$, so for all $f\in KB_0$,
\[(f+I)\wedge b=(f\wedge b)+I\]

Next we note that the functions $(\delta_b)_{b\in B_0}\subseteq KB_0$ remain distinct in $KB_0/I$.

\begin{proposition}\label{IdealElementSupport}
    If $f\in I$ then $|\mathrm{supp}(f)|\neq1,2$.
\end{proposition}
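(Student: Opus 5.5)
The plan is to separate elements of $KB_0$ from $I$ by linear functionals coming from ultrafilters, in the same spirit as the functionals $\chi_U$ on $A$ constructed in the previous section. For each $U\in\mathcal{U}(B)$, define a $K$-bilinear map $\chi_U:KB_0\rightarrow K$ on the basis by $\chi_U(\delta_a)=1$ if $a\in U$ and $\chi_U(\delta_a)=0$ otherwise, for $a\in B_0$. Equivalently, $\chi_U(f)=\sum_{a\in U\cap B_0}f(a)$, which is a finite sum since $f$ is finitely supported. This is well defined, and it is left and right $K$-linear because $f\mapsto f(a)$ is.

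The first step is to check that each $\chi_U$ vanishes on $I$. It suffices to check this on the spanning elements $\delta_a+\delta_b-\delta_{a\vee b}$, where $a,b\in B_0$ are orthogonal and bounded. Orthogonal elements are disjoint, as noted before the definition of $I$. Hence $a$ and $b$ cannot both lie in $U$, since otherwise $0=a\wedge b\in U$. By \Cref{UltraPrime}, $U$ is prime, so $a\vee b\in U$ implies $a\in U$ or $b\in U$. Conversely, $a\in U$ or $b\in U$ implies $a\vee b\in U$ because $U$ is an up-set. Therefore exactly one of $a,b$ lies in $U$ precisely when $a\vee b\in U$. In every case $\chi_U(\delta_a)+\chi_U(\delta_b)-\chi_U(\delta_{a\vee b})=0$.

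Next, take $f\in I$ and suppose its support has one or two elements.

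\emph{Support of size one.} Then $f=k\delta_a$ with $k\neq0$ and $a\in B_0$. By the Kuratowski--Zorn lemma, the proper filter $a^\leq$ extends to an ultrafilter $U\ni a$. This gives $\chi_U(f)=k\neq0$, a contradiction.

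\emph{Support of size two.} Then $f=k\delta_a+l\delta_b$ with $k,l\neq0$ and distinct $a,b\in B_0$. By antisymmetry we cannot have both $a\leq b$ and $b\leq a$; by symmetry, say $a\not\leq b$. Then the relative pseudocomplement $a\setminus b$, which is the complement of $a\wedge b$ in the Boolean algebra $a^\geq$, is nonzero, since $a\wedge b\neq a$. Extend $(a\setminus b)^\leq$ to an ultrafilter $U$. Then $a\in U$, since $a\setminus b\leq a$. Also $b\notin U$, since otherwise $0=(a\setminus b)\wedge b\in U$. Hence $\chi_U(f)=k\neq0$, again contradicting $f\in I$.

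I expect no serious obstacle. The only points needing care are the primeness argument in the second paragraph, which uses \Cref{UltraPrime} together with disjointness, and the fact that $a\not\leq b$ forces $a\setminus b\neq0$. The latter follows because $a\setminus b=0$ would make $a\wedge b$ have complement $0$ in $a^\geq$, that is, $a\wedge b=a$, so $a\leq b$.
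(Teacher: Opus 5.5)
Your proof is correct, but it takes a different route from the paper's.

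\textbf{The paper's route.} The paper works entirely inside $KB_0$.
\begin{itemize}
\item For the one-element case it shows that every $f\in I$ is locally right-annihilated: for each $c\in B_0$ there is a nonzero $d\le c$ with $f\delta_d=0$. This comes from writing $c$ as the join of $\mathsf{s}(a)c$, $\mathsf{s}(b)c$ and $c\setminus(\mathsf{s}(a)c\vee\mathsf{s}(b)c)$. Orthogonality and right distributivity then show that each of these three elements annihilates a generator $\delta_a+\delta_b-\delta_{a\vee b}$. One then passes to spans by shrinking $d$.
\item Consequently $k\delta_c\in I$ gives $k\delta_{cp}=0$ for some nonzero $p\le\mathsf{s}(c)$, so $k=0$.
\item The two-element case is reduced to the one-element case by applying $\wedge(a\setminus b)$, which maps $I$ into $I$.
\end{itemize}

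\textbf{Your route.} You build the ultrafilter evaluations $\chi_U$ on $KB_0$. Primeness (\Cref{UltraPrime}) together with disjointness of the generating pairs shows that each $\chi_U$ kills $I$. You then separate points using ultrafilters obtained from Kuratowski--Zorn.

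\textbf{What each approach buys.}
\begin{itemize}
\item Your argument treats both cases uniformly.
\item It uses only the order (semi-Boolean) structure of $B$, not its multiplication or the ideal property of $I$. For bounded pairs, disjointness and orthogonality coincide by \eqref{ConditionalMeets}.
\item It essentially constructs the map $KB_0/I\to K\mathcal{U}(B)$ directly. The paper obtains this only later, via \Cref{QuotientBCsemigroup} and \Cref{SteinbergAlgebraRepresentation}.
\item In exchange, the paper's argument is purely algebraic and choice-free, since it never needs ultrafilters to exist.
\end{itemize}
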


\begin{proof}
    First we claim that, for all $a,b,c\in B_0$ with $a$ and $b$ orthogonal and bounded, we have $d\in B_0$ with $d\leq c$ and $(\delta_a+\delta_b-\delta_{a\vee b})\delta_d=0$.  Indeed, note that this last equation holds whenever $d=\mathsf{s}(a)c$, $\mathsf{s}(b)c$ or $c\setminus(\mathsf{s}(a)c\vee\mathsf{s}(b)c)$.  As $c$ is the join of these three elements, one of them must be non-zero, proving the claim.  Then $(\delta_{a}+\delta_{b}-\delta_{a\vee b})\delta_{d'}=0$ too, for any $d'\leq d$.  So by induction the same result then holds for spans, i.e.~for all $f\in I$ and $c\in B_0$, we have $d\in B_0$ with $d\leq c$ and $f\delta_d=0$.  From this it follows that if $k\delta_c\in I$ then $k=0$, as we can take $p\in B_0$ with $p\leq\mathsf{s}(c)$ and $k\delta_c\delta_p=0$ so $cp\neq0$ and $k\delta_{cp}=k\delta_c\delta_p=0$.  In other words, if $f\in I$ then $|\mathrm{supp}(f)|\neq1$.

    Now say $k\delta_a+l\delta_b\in I$, for some $k,l\in K$ and distinct $a,b\in B_0$.  Then $a\setminus b\neq0$ and
    \[k\delta_{a\setminus b}=(k\delta_{a\wedge(a\setminus b)}+l\delta_{b\wedge(a\setminus b)})=(k\delta_a+l\delta_b)\wedge(a\setminus b)\in I.\]
    Thus $k=0$ and, likewise $l=0$, showing that $|\mathrm{supp}(f)|\neq2$ whenever $f\in I$.
\end{proof}

Every semi-Boolean restriction semigroup arises as a Boolean-Cartan semigroup of a $K$-algebra, for any given ring $K$, thanks to \Cref{RSrep}, \Cref{Semicategory->BooleanCartan} and the comment immediately after.  Now we can also prove this more directly as follows.

\begin{corollary}\label{QuotientBCsemigroup}
    $KB_0/I$ has a Boolean-Cartan semigroup isomorphic to $B$ given by
    \[B/I=\{\delta_b+I\mathrel{|}b\in B\}.\]
\end{corollary}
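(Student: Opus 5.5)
We verify each clause of Definition~\ref{def:BC} for $A=KB_0/I$ with $B/I$ and projections $P/I=\{\delta_p+I\mid p\in P\}$, after first showing that $b\mapsto\delta_b+I$ is an isomorphism of restriction semigroups from $B$ onto $B/I$.

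\textbf{Isomorphism.} The map $b\mapsto\delta_b+I$ is multiplicative because $\delta_a\delta_b=\delta_{ab}$, with $\delta_0=0$ covering the case $ab=0$. It is injective by \Cref{IdealElementSupport}. If $a\neq b$ in $B_0$, then $\delta_a-\delta_b$ has support of size $2$, so it is not in $I$. If $b\neq 0$, then $\delta_b$ has support of size $1$, so $\delta_b\notin I$. Hence $B/I$ is a multiplicative subsemigroup isomorphic to $B$, and it is a restriction semigroup relative to $P/I$. It commutes with $K$ because the $\delta_b$ commute with scalars in $KB_0$. It generates $KB_0/I$ because the $\delta_b$ span $KB_0$.

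\textbf{Torsion-free.} If $k\delta_b\in I$ with $b\neq0$, then $k=0$. This is exactly what the first part of the proof of \Cref{IdealElementSupport} shows: a nonzero element of $I$ cannot have support of size $1$.

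\textbf{Subtractive.} Let $p\leq q$ in $P$. Then $p$ and $r=q\setminus p$ are orthogonal projections bounded by $q$, and $p\vee r=q$. Note that $r\in P$ because $P^\geq=P$. Since $\delta_p+\delta_r-\delta_q\in I$, we get $(\delta_q+I)-(\delta_p+I)=\delta_r+I\in P/I$. The degenerate cases where $p$ or $r$ equals $0$ are trivial because $\delta_0=0$.

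\textbf{Meets.} Use the induced operation $(f+I)\wedge b=(f\wedge b)+I$, which was shown well defined just before the statement and is linear in the first variable. Transported along the isomorphism, $(\delta_a+I)\wedge(\delta_b+I)=\delta_{a\wedge b}+I$. We must check this is the maximum of the common lower bounds in $B/I$. The order on $B/I$ is defined via projections and products, so it corresponds under the isomorphism to the order on $B$. Since $a\wedge b$ is the meet in the semi-Boolean algebra $B$, \eqref{Meets} holds.

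\textbf{Main obstacle.} The only nontrivial inputs are injectivity and torsion-freeness, both of which come from \Cref{IdealElementSupport}. One point needs care: the operation must be defined on $A\times(B/I)$, so its second argument has to be well defined modulo $I$. This is automatic because $b\mapsto\delta_b+I$ is injective.
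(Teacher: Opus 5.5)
Your proof is correct and follows essentially the same route as the paper. In both, injectivity of $b\mapsto\delta_b+I$ and \eqref{TorsionFree} come from \Cref{IdealElementSupport}, \eqref{Subtractive} comes from $\delta_p+\delta_{q\setminus p}-\delta_q\in I$, and \eqref{Meets} comes from transporting the induced operation $(f+I)\wedge b=(f\wedge b)+I$ along the isomorphism. You are slightly more careful than the paper's terser argument in handling the degenerate cases ($b=0$ for injectivity, and $p=0$ or $q\setminus p=0$ for subtractivity).
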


\begin{proof}
    By \Cref{IdealElementSupport}, $\delta_a-\delta_b\notin I$, for all distinct $a,b\in B$, so $B/I$ is isomorphic to $B$.  In particular, it is a semi-Boolean restriction semigroup generating $KB_0/I$.  Now if $p,q\in P$ and $p\leq q$ then $p$ and $q\setminus p$ are orthogonal with join $q$ and hence $\delta_p+\delta_{q\setminus p}-\delta_q\in I$ or, equivalently, $\delta_q-\delta_p+I=\delta_{q\setminus p}+I\in P/I=\{\delta_p+I\mathrel{|}p\in P\}$, showing that \eqref{Subtractive} holds.  Also, again by \Cref{IdealElementSupport}, $k\delta_b\in I$ implies $k=0$ or $b=0$ so \eqref{TorsionFree} holds.  Finally, identifying $B$ with $B/I$ we obtain a map $\wedge:KB_0\times B/I\rightarrow KB_0$ which is linear in $KB_0$ and extends the meet operation on $B/I$.
\end{proof}

From this we obtain a result analogous to \cite[Corollary 2.14]{SteinbergSzakacs2021}.

\begin{corollary}
    The semigroup algebra quotient $KB_0/I$ is isomorphic to the algebra $K\mathcal{U}(B)$ of compactly supported locally constant $K$-valued functions on the ultrafilter semicategory $\mathcal{U}(B)$.
\end{corollary}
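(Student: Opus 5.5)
The plan is to combine \Cref{QuotientBCsemigroup} with \Cref{SteinbergAlgebraRepresentation}. By \Cref{QuotientBCsemigroup}, $B/I=\{\delta_b+I\mathrel{|}b\in B\}$ is a Boolean-Cartan restriction semigroup in the $K$-algebra $KB_0/I$, with projections $P/I$. \Cref{SteinbergAlgebraRepresentation} then gives an algebra isomorphism $KB_0/I\cong K\mathcal{U}(B/I)$, namely $a\mapsto\hat{a}$.

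It then remains to identify $\mathcal{U}(B/I)$ with $\mathcal{U}(B)$ as topological semicategories. The map $b\mapsto\delta_b+I$ is an isomorphism of restriction semigroups. It is injective by \Cref{IdealElementSupport}, it is multiplicative because $\delta_a\delta_b=\delta_{ab}$, and it sends $0$ to $\delta_0+I=0$. It also preserves the source projections, since these are determined by the product and by $P$. The canonical order, meets and joins are defined purely from the product and the projections, so they are transported as well. Hence $U\mapsto\{\delta_u+I\mathrel{|}u\in U\}$ is a bijection between ultrafilters that sends each basic open set $O_b$ to $O_{\delta_b+I}$, so it is a homeomorphism. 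The product $V\cdot W=(VW)^\leq$, defined exactly when $0\notin VW$, is also expressed purely in the semigroup and order structure, so it is transported too. This yields an isomorphism of ample semicategories $\mathcal{U}(B)\cong\mathcal{U}(B/I)$. Composing functions with this homeomorphism preserves convolution, local constancy and compact support, so it induces an algebra isomorphism $K\mathcal{U}(B/I)\cong K\mathcal{U}(B)$.

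A few hypotheses need to be checked along the way. The element $\delta_0+I=0$ must lie in $B/I$, which holds because $\delta_0=0$ as a function on $B_0$. Also, $B/I$ must commute with $K$ inside $KB_0/I$. This holds because $k\delta_b$ and $\delta_bk$ are the same function, namely $k$ at $b$ and $0$ elsewhere. The step I expect to be the most delicate is the meet operation in \Cref{QuotientBCsemigroup}. The corollary's proof phrases $\wedge$ as landing in $KB_0$, but \Cref{def:BC} requires an operation $(KB_0/I)\times(B/I)\to KB_0/I$. This operation is the induced map $(f+I)\wedge(\delta_b+I)=(f\wedge b)+I$. It is well defined because $I\wedge c\subseteq I$, which was verified above using the identity $(a\vee b)\wedge c=(a\wedge c)\vee(b\wedge c)$ in the Boolean algebra bounding $a$ and $b$. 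It restricts to the meet on $B/I$ because $\delta_a\wedge b=\delta_{a\wedge b}$. With that confirmed, composing the two isomorphisms, $\delta_b+I\mapsto\chi_{O_b}$, gives the claimed isomorphism $KB_0/I\cong K\mathcal{U}(B)$.
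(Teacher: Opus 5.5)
Your proposal is correct and follows the paper's approach: the paper's proof is the one-line combination of \Cref{QuotientBCsemigroup} with \Cref{SteinbergAlgebraRepresentation}, and your argument is that same combination. Your transport of structure along $b\mapsto\delta_b+I$, which identifies $\mathcal{U}(B/I)$ with $\mathcal{U}(B)$, and your check that $\wedge$ descends to $(KB_0/I)\times(B/I)\to KB_0/I$, make explicit identifications that the paper leaves implicit.
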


\begin{proof}
    Immediate from \Cref{SteinbergAlgebraRepresentation} and \Cref{QuotientBCsemigroup}.
\end{proof}

As a converse to \Cref{QuotientBCsemigroup}, we also have the following.

\begin{proposition}\label{prop:iso}
    If a $K$-algebra $A$ has a Boolean-Cartan semigroup $B$ then $KB_0/I$ is isomorphic to $A$ via the unique algebra homomorphism sending each $\delta_b+I$ to $b$.
\end{proposition}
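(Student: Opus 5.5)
Since $B\subseteq A$ is a multiplicative subsemigroup commuting with $K$, the assignment $\delta_b\mapsto b$ for $b\in B_0$ (with $\delta_0=0\mapsto 0$) extends by $K$-bilinearity to a map $\pi:KB_0\to A$. This map is multiplicative on the spanning set: $\delta_a\delta_b=\delta_{ab}\mapsto ab$, and if $ab=0$ both sides are $0$. Because $B$ commutes with $K$, it follows that $\pi$ is an algebra homomorphism. It is surjective because $B$ generates $A$ and, as $B$ is multiplicatively closed and commutes with $K$, $A=\mathrm{span}(B)$. Uniqueness of the induced map is clear because the $\delta_b+I$ span $KB_0/I$.

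Next I check that $I\subseteq\ker\pi$. For orthogonal bounded $a,b\in B_0$, orthogonality gives disjointness, so \eqref{SumsMeetsJoins} of \Cref{SemiBooleanRestriction} yields $a\vee b=a+b$ in $A$. Hence $\pi(\delta_a+\delta_b-\delta_{a\vee b})=a+b-(a\vee b)=0$. So $\pi$ induces a surjective homomorphism $\bar\pi:KB_0/I\to A$ with $\delta_b+I\mapsto b$.

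The main step is injectivity, and I see two routes. The cleanest is to compose with \Cref{SteinbergAlgebraRepresentation}. By \Cref{QuotientBCsemigroup}, $B/I\cong B$ is Boolean-Cartan in $KB_0/I$, so $KB_0/I\cong K\mathcal{U}(B)$ via $f+I\mapsto\hat f$. Likewise $A\cong K\mathcal{U}(B)$ via $a\mapsto\hat a$. Both isomorphisms send the generator corresponding to $b$ to $\chi_{O_b}$, and the ultrafilter semicategory is the same in both cases because $B/I$ and $B$ are isomorphic as restriction semigroups with meets. Composing the first isomorphism with the inverse of the second gives an isomorphism $KB_0/I\to A$ that agrees with $\bar\pi$ on the generators $\delta_b+I$, so it equals $\bar\pi$, and $\bar\pi$ is therefore bijective.

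The one point needing care is that $\hat{\ }$ is intrinsic to $(B,\wedge)$. On generators, $\widehat{b}=\chi_{O_b}$ is defined purely through membership in ultrafilters of $B$, so the two isomorphisms match on spanning sets, and that suffices. Alternatively, and more directly, one could show $\ker\pi\subseteq I$: write any $f\in KB_0$ modulo $I$ as a combination of pairwise orthogonal elements (by refining with relative complements $a\setminus b$, $a\wedge b$, using the $I$-relations), and then use \eqref{TorsionFree} together with the disjointness argument from the proof of \Cref{SteinbergAlgebraRepresentation} to conclude that each coefficient vanishes. Making disjoint elements orthogonal is the fiddly part there, so I would go with the composition argument.
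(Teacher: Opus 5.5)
Your argument is correct. Your setup matches the paper: $\pi$ is well defined and surjective, and $I\subseteq\ker\pi$ via \eqref{SumsMeetsJoins}. Your main injectivity step, however, takes a different route.

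The paper uses what you list as the alternative. Given $f\in\ker\pi$, it adds an element of $I$ to obtain $g=\sum_j k_j\delta_{a_j}$ with the $a_j\in B_0$ pairwise \emph{disjoint}. Then $0=\pi(g)\wedge a_j=k_ja_j$ gives $k_j=0$ by \eqref{TorsionFree}, so $f\in I$. Only disjointness is used there, not orthogonality. This matters, because one cannot in general refine to pairwise orthogonal elements. For example, among the sections of the pair groupoid on $\{x,y\}$, the atoms $\{(x,x)\}$ and $\{(y,x)\}$ are disjoint, share a source, and admit no further refinement. The disjoint refinement itself is not fiddly. If $x,y\leq c$ are disjoint then $x\mathsf{s}(y)=x\wedge y=0$ by \eqref{ConditionalMeets}. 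Hence $a\wedge b$ and $a\setminus b$ are orthogonal and bounded with join $a$, and so $\delta_a\equiv\delta_{a\wedge b}+\delta_{a\setminus b}$ modulo $I$.

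Your composition route is nonetheless sound. \Cref{QuotientBCsemigroup} is proved independently of this proposition, via \Cref{IdealElementSupport}. The isomorphism $b\mapsto\delta_b+I$ induces an isomorphism of ultrafilter semicategories matching $O_b$ with $O_{\delta_b+I}$. So the map $a\mapsto\hat a$ composed with $\bar\pi$ agrees on the spanning set $\{\delta_b+I\}$ with an isomorphism $KB_0/I\to K\mathcal{U}(B)$, which forces $\bar\pi$ to be injective.

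This is essentially the route sketched in the paper's introduction, and it is short given \Cref{SteinbergAlgebraRepresentation}. It does, however, invoke the full ultrafilter representation twice, including the Kuratowski--Zorn lemma for the existence of ultrafilters. The paper's direct argument uses only the linear meet operation and \eqref{TorsionFree}, and exhibits $\ker\pi=I$ explicitly.
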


\begin{proof}
    The algebra homomorphism $\pi$ from $KB_0$ to $A$ sending each $\delta_b$ to $b$ has $I$ in its kernel.  Indeed, for all orthogonal bounded $a,b\in B$, $a$ and $b$ are also disjoint, so $a\vee b=a+b$ by \eqref{SumsMeetsJoins}.  It suffices to show that the kernel of $\pi$ contains no other elements of $KB_0$.  To see this, say we had $f\in KB_0\setminus I$ with $\pi(f)=0$.  Adjusting by elements of $I$ if necessary, we can turn $f$ into an element $g\in KB_0\setminus I$ of the form $g=k_1\delta_{a_1}+\ldots+k_n\delta_{a_n}$ for disjoint $a_1,\ldots,a_n\in B_0$ and $k_1,\ldots,k_n\in K$.  Then $0=\pi(g)=k_1a_1+\ldots+k_na_n$ so, for each $j\leq n$, we also see that $0=\pi(g)\wedge a_j=ka_j$ and hence $k_j=0$.  As $j$ was arbitrary, this means $0=g\notin I$, a contradiction.
\end{proof}

\section{Pseudo-Cartan, quasi-Cartan, Cartan and diagonal subalgebras}\label{s:9}

In this section, we relate our Boolean-Cartan semigroups with various commutative subalgebras from \cite{9a23}, where the ring $K$ is assumed to be commutative and \emph{indecomposable}, i.e.~the only idempotents are $0$ and $1$.
Specifically, given a Boolean-Cartan inverse semigroup, we find conditions that give rise to a natural pseudo-Cartan subalgebra.
In \Cref{QuasiCartan}, we show that this subalgebra is quasi-Cartan under an analogue of the local bisection hypothesis of \cite{Ste19}.
In this case, we show the ultrafilter groupoid of the Boolean-Cartan inverse semigroup is isomorphic to the base groupoid of the discrete twist used to reconstruct the algebra in \cite{9a23} (\Cref{GpdIsom}).
As a corollary, and drawing on results from \cite{9a23,Lawson2023}, we characterise when the subalgebra is a Cartan or diagonal subalgebra in terms of the Boolean-Cartan inverse semigroup (\Cref{CartanDiagonal}).

Throughout this section we assume that

\begin{center}
    \textbf{$A$ is an algebra over an indecomposable commutative ring $K$.}
\end{center}

\begin{definition}
    Let $D$ be a commutative subalgebra of $A$. 
    Suppose $D = \mathrm{span}(E(D))$, where $E(D)$ is the set of idempotents of $D$, and that $D$ satisfies 
    \begin{align}
    \tag{Without Torsion}\label{WithoutTorsion}k\in K\setminus\{0\}\text{ and }e\in E(D)\setminus\{0\}\quad&\Rightarrow\quad ke\neq0,\qquad\text{and}\\
    \tag{Local Units}\label{LocalUnits}a\in A\quad&\Rightarrow\quad\exists e \in E(D)\,(ae=a=ea).
    \end{align}
    Further suppose there is an \emph{expectation}, i.e. a linear map $\Phi \colon A \to D$ such that $\Phi(d)=d$ and $\Phi(dad') = d\Phi(a)d'$ for all $d, d' \in D$ and $a \in A$.
    Lastly, suppose $A = \mathrm{span}(N)$, where 
    \[
        N\coloneqq\{n\in A\mathrel{|}\exists m\in A\text{ such that }mnm=n,nmn=n,nDm\cup mDn\subseteq D\}
    \]
    is the set of \emph{normalisers} of $D$ in $A$.
    In the above setting, we say $D$ is a \emph{pseudo-Cartan} subalgebra of $A$.
\end{definition}


For any pseudo-Cartan subalgebra $D$, by \cite[Lemma 2.15]{9a23}, $N$ is an inverse semigroup with inverse $n \mapsto n^{\dagger}$ and idempotents $E(N) = E(D)$, where $n^{\dagger}$ is the unique element of $N$ satisfying $nn^{\dagger}n = n$, $n^{\dagger}nn^{\dagger} = n^{\dagger}$ and $nDn^{\dagger} \cup n^{\dagger}Dn \subseteq D$.
We also assume

\begin{center}
    \textbf{$I$ is a Boolean-Cartan inverse semigroup in $A$ relative to its idempotents $E(I)$, and $E(I)$ is a join-semilattice.}
\end{center}

The \eqref{LocalUnits} condition agrees with that in \cite{Kudryavtseva2025}, which immediately extends to finite subsets, e.g.~if $a,b\in A$ and $c,d\in E(D)$ satisfy $ac=a=ca$ and $bd=b=db$ then $e:=c+d-cd=c+d-dc\in E(D)$, $ae=a=ea$ and $be=b=eb$.  Every Boolean-Cartan semigroup is a conditional join-semilattice, but its projections need not be a join-semilattice, hence the assumption that $E(I)$ is a join-semilattice.

\begin{proposition}\label{PseudoCartan}
    The subalgebra \[D\coloneqq\mathrm{span}(E(I))\] is pseudo-Cartan, $E(D)=E(I)=E(N)\eqqcolon E$, and $I$ is an inverse subsemigroup of $N$.
\end{proposition}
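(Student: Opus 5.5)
The plan is to verify the clauses of the pseudo-Cartan definition one at a time for $D=\mathrm{span}(E(I))$. Most of the work reduces to earlier results applied to the Boolean-Cartan structure of $I$ and of $E(I)$.

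\emph{Basic structure.} $E(I)$ is a commutative semigroup of idempotents closed under products, so $D$ is a commutative subalgebra of $A$. The \eqref{WithoutTorsion} condition for elements of $E(I)$ is just \eqref{TorsionFree} for $I$.

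\emph{Local units.} Write $a\in A$ as $a=\sum_j k_jb_j$ with $b_j\in I$. Let $e$ be the join in $E(I)$ of all $\mathsf{r}(b_j)=b_jb_j^{-1}$ and $\mathsf{s}(b_j)=b_j^{-1}b_j$; this join exists because $E(I)$ is assumed to be a join-semilattice. Then $eb_j=b_j=b_je$ for each $j$, so $ae=a=ea$.

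\emph{Normalisers.} Each $b\in I$ is a normaliser with $m=b^{-1}$, since $bb^{-1}b=b$, $b^{-1}bb^{-1}=b^{-1}$, and $bEb^{-1}\cup b^{-1}Eb\subseteq E(I)$, which extends linearly to $bDb^{-1}\cup b^{-1}Db\subseteq D$. Hence $I\subseteq N$ and $A=\mathrm{span}(I)\subseteq\mathrm{span}(N)$. By the uniqueness of $n^\dagger$ in \cite[Lemma 2.15]{9a23}, $b^\dagger=b^{-1}$ for $b\in I$, so $I$ is an inverse subsemigroup of $N$.

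\emph{Expectation.} \Cref{Expectation} gives a linear $\Phi:A\to A$ with $\Phi(b)=\max(b^\geq\cap E(I))\in E(I)$. Thus $\Phi$ maps into $D$, and $\Phi(e)=e$ for $e\in E(I)$, hence $\Phi|_D=\mathrm{id}$. By linearity it suffices to check $\Phi(dbd')=d\Phi(b)d'$ for $d,d'\in E(I)$ and $b\in I$.
\begin{itemize}
\item In the inverse semigroup $I$ we have $dbd'\leq b$, so any idempotent $f\leq dbd'$ satisfies $f\leq\Phi(b)$. Also $f=dfd'$, so $f\leq \Phi(b)dd'=d\Phi(b)d'$.
\item Conversely, $d\Phi(b)d'$ is an idempotent and lies below $dbd'$.
\end{itemize}
Together these give $\Phi(dbd')=d\Phi(b)d'$.

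\emph{Idempotents, the main obstacle.} The remaining claim is $E(D)=E(I)$; then $E(N)=E(D)$ follows from \cite[Lemma 2.15]{9a23}. The inclusion $E(I)\subseteq E(D)$ is clear. For the converse I will apply \Cref{SteinbergAlgebraRepresentation}, as in \Cref{Keimel}, to the Boolean-Cartan semigroup $E(I)$ in $D$, using $\wedge$ given by multiplication. This identifies $D$ with the compactly supported locally constant functions on $\mathcal{U}(E(I))$.

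An idempotent function takes values that are idempotents of $K$, hence lie in $\{0,1\}$ because $K$ is indecomposable. So it is the characteristic function of a compact open set. Such a set is a finite union of basic sets $O_e$, and since $O_{e\vee f}=O_e\cup O_f$ it equals $O_g$ for the join $g$ in $E(I)$. Hence every idempotent of $D$ is $\hat g$ for some $g\in E(I)$, which shows $E(D)\subseteq E(I)$.
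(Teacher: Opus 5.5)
Your proof is correct. For the one nontrivial clause, $E(D)\subseteq E(I)$, it follows a genuinely different route from the paper.

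\textbf{The paper's route.} The paper argues elementarily. It writes a nonzero idempotent $e\in D$ as $\sum_i k_ie_i$ with nonzero $k_i$ and disjoint nonzero $e_i\in E(I)$. It then applies $\wedge e_j$ to both sides of $e=e^2=\sum_i k_i^2e_i$ to get $k_je_j=k_j^2e_j$. By \eqref{TorsionFree} this gives $k_j=k_j^2$, hence $k_j=1$ by indecomposability. Finally $e=\sum_ie_i=\bigvee_ie_i\in E(I)$ because $E(I)$ is a join-semilattice.

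\textbf{Your route.} You instead run the Keimel-type reconstruction (\Cref{Keimel}, \Cref{SteinbergAlgebraRepresentation}) on the pair $(D,E(I))$ and read the idempotents off topologically. This is heavier machinery, though not circular, since that theorem is independent of this proposition. In return it is conceptually transparent. It shows that idempotents of $D$ correspond to compact open subsets of $\mathcal{U}(E(I))$, and that the join-semilattice hypothesis is exactly what makes every such set a basic $O_g$. The paper's computation is shorter and self-contained.

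\textbf{Smaller differences.} For the expectation, you derive $\Phi(dbd')=d\Phi(b)d'$ directly from the $\max$ characterisation rather than citing \cite[Lemma 3.2]{Lawson2023}. For local units, your single join of all $\mathsf{r}(b_j)$ and $\mathsf{s}(b_j)$ gives one two-sided local unit. This matches the definition slightly more literally than the paper's separate $e$ and $f$, which must then be joined.

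\textbf{Points to make explicit.} First, state why the convolution on $K\mathcal{U}(E(I))$ is pointwise. Every ultrafilter of $E(I)$ is a unit by \eqref{Units}, and distinct unit ultrafilters $V\neq W$ contain $v\in V$, $w\in W$ with $vw=0$, so they are not composable. This is what lets you conclude that idempotent functions are $\{0,1\}$-valued. Second, fix the ordering of the argument. The uses of \cite[Lemma 2.15]{9a23} (for $b^\dagger=b^{-1}$ and $E(N)=E(D)$), and \eqref{WithoutTorsion} on all of $E(D)$, should come only after $E(D)=E(I)$ and the remaining pseudo-Cartan axioms are established.
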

\begin{proof}
    Firstly, note that $D$ is a commutative subalgebra of $A$ because $K$ is commutative and $E(I)$ is a commutative semigroup. Because of \eqref{TorsionFree}, \eqref{WithoutTorsion} will hold provided $E(D)\subseteq I$.
    In fact, we can show $E(D) = E(I)$. 
    Fix $e \in E(D) \setminus \{ 0 \}$.
    We write $e=\sum_{i\leq n}k_ie_i$ for some $k_i\in K\setminus\{0\}$ and $e_i\in E(I)\setminus\{0\}$.
    Then, because $e$ is idempotent, we have $\sum_{i\leq n}k_ie_i = \sum_{i,j\leq n}k_ik_je_ie_j = \sum_{i\leq n}k_i^2e_i$.
    Fix $j\leq n$.
    Using the operation $\wedge:A\times I\to A$, we have $(\sum_{i\leq n}k_ie_i)\wedge e_j=(\sum_{i\leq n}k_i^2e_i)\wedge e_j$, so $k_je_j=k_j^2e_j$.
    Since $e_j \in I\setminus\{0\}$, \eqref{TorsionFree} implies $k_j=k_j^2$.
    Because $K$ is indecomposable and $k_j\neq0$, we have $k_j=1$ for all $j\leq n$.
    Thus, $e=\sum_{i\leq n}e_i$.
    Becuase $E(I)$ is a join-semilattice, we have $\bigvee_{i\leq n}e_i\in E(I)$.
    Then, as in the proof of \Cref{SemiBooleanRestriction}, $e=\sum_{i\leq n}e_i=\bigvee_{i\leq n}e_i \in E(I)$.
    That is, $E(D)\subseteq E(I)$, and the reverse inclusion follows from the definition of $D$, so $E(D)=E(I)\eqqcolon E$.
    Consequently, \eqref{WithoutTorsion} holds.
    Moreover, because $E(D)=E(I)$, $D=\mathrm{span}(E(D))$.
    
    We show \eqref{LocalUnits}, i.e.~$a \in Ea \cap aE$, for all $a \in A$. 
    If $a=\sum_{i=1}^n k_{i}b_{i}$ with $\{b_1,\ldots,b_n\}\subseteq I$, then we let $e\coloneqq\bigvee_{i=1}^n b_{i}b_{i}^{-1}$, using that $E$ is a join-semilattice, and so
    \[
        ea 
        = \sum_{i=1}^n k_{i}eb_{i}
        = \sum_{i=1}^n k_{i}eb_{i}b_{i}^{-1}b_{i}
        = \sum_{i=1}^n k_{i}b_{i}b_{i}^{-1}b_{i}
        = \sum_{i=1}^n k_{i}b_{i}
        = a.
    \]
    Similarly, $f \coloneqq \bigvee_{i=1}^n b_{i}^{-1}b_{i}$ satisfies $af =a$, and so $a \in Ea \cap aE$, as needed.

    Recall from \Cref{Expectation} that the operation $\wedge:A\times I\to A$ induces a linear map $\Phi:A\rightarrow A$ such that, $\Phi(b)=\max(b^\geq\cap E)\in E$ for all $b\in I$.
    That is, $\Phi|_I$ is the fixed point operator of $I$.
    Because $A=\mathrm{span}(I)$ and $D=\mathrm{span}(E)$, we have $\Phi(A)\subseteq D$.
    Also, $\Phi(e)=\max(e^\geq\cap E)=e$ for all $e\in E$, so $\Phi(d)=d$ for all $d\in D$.
    To see that $\Phi$ is an expectation, it remains to show $d\Phi(a)d'=\Phi(dad')$ for all $d,d'\in D$ and $a \in A$.
    Again, $A=\mathrm{span}(I)$ and $D=\mathrm{span}(E)$, so it suffices to show $e\Phi(b)e'=\Phi(ebe')$ for all $b\in I$ and $e,e'\in E$.
    This follows from the identities $e\Phi(b)=\Phi(eb)$ and $\Phi(be)=\Phi(b)e$ for all $b \in I$ and $e \in E$ as in \cite[Lemma 3.2]{Lawson2023}.
    Therefore, $\Phi$ is an expectation. 

    To see $D$ is pseudo-Cartan, it remains to show $A = \mathrm{span}(N)$.
    Because $A = \mathrm{span}(I)$, it suffices to show $I \subseteq N$.
    In fact, we show $I$ is an inverse subsemigroup of $N$. 
    For each $b \in I$, $bb^{-1}b = b$ and $b^{-1}bb^{-1} = b^{-1}$ because $I$ is an inverse semigroup. 
    Moreover, $bDb^{-1}\cup b^{-1}Db\subseteq D$ because $D=\mathrm{span}(E)$. 
    Hence, $I\subseteq N$ and $b^\dagger=b^{-1}$ for all $b\in I$.
\end{proof}

An expectation $\Phi$ is \emph{implemented by idempotents} if, for every $n \in N$, there is some $e \in E(D)$ such that $\Phi(n) = en = ne$.
Like in \cite[Definition 3.3]{9a23}, a pseudo-Cartan subalgebra $D$ with expectation $\Phi$ is \emph{quasi-Cartan} if $\Phi$ is implemented by idempotents.

\begin{remark}
    If an expectation $\Phi$ is implemented by idempotents, then it is automatically \emph{faithful} (i.e. for every $a \in A \setminus \{ 0 \}$, there is some $m \in N$ such that $\Phi(ma) \neq 0$), as per \cite[Proposition 11.3]{Bice2023}.
    Hence, the above definition of quasi-Cartan subalgebras coincides with that of \cite{9a23}.
\end{remark}

By \cite[Theorem 8.7]{9a23}, whenever $D$ is quasi-Cartan, the discrete twist $\Sigma$ over $G$ that is used to reconstruct $A$ satisfies a twisted version of the local bisection hypothesis from \cite[Definition 4.9]{Ste19}, which says that every normaliser is supported on an open bisection.
Under our representation from \Cref{SteinbergAlgebraRepresentation}, $I$ identifies with the characteristic functions $\chi_{O_b}$ of the compact open bisections $O_b$ of the ultrafilter groupoid $\mathcal{U}(I)$ for $b\in I$.
Thus, we will show $D$ is quasi-Cartan under the assumption that

\begin{center}
    \textbf{$N$ satisfies $N\subseteq DI$,}
\end{center}

\noindent
which is analogous to the local bisection hypothesis.

\begin{proposition}\label{QuasiCartan}
    The subalgebra $D$ is quasi-Cartan.
\end{proposition}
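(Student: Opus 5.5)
By \Cref{PseudoCartan}, $D$ is already pseudo-Cartan with expectation $\Phi$ from \Cref{Expectation}. So it only remains to show that $\Phi$ is implemented by idempotents: for each $n\in N$ we need some $e\in E$ with $\Phi(n)=en=ne$. The plan is to treat elements of $I$ first, and then pass to a general normaliser using the hypothesis $N\subseteq DI$.

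\emph{Step 1: elements of $I$.} For $b\in I$, set $e=\Phi(b)=\max(b^\geq\cap E)$, the fixed point idempotent. Since $e\leq b$ and $E^{\geq}=E$, we get $e=b\mathsf{s}(e)=b e$, because $e=\mathsf{s}(e)$. Symmetrically, in an inverse semigroup $e\leq b$ also gives $e=eb$, as $e=\mathsf{r}(e)b$. Hence $\Phi(b)=eb=be$ with $e\in E$.

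\emph{Step 2: general normalisers.} Take $n\in N$ and write $n=db$ with $d\in D$ and $b\in I$. The expectation property gives
\[\Phi(n)=d\Phi(b)=d\,\Phi(b)b=\Phi(b)\,db=\Phi(b)n,\]
using Step 1, $\Phi(b)b=\Phi(b)$, and the commutativity of $D$. So $e:=\Phi(b)$ satisfies $\Phi(n)=en$.

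For the other side we want $ne=\Phi(n)$ as well. Computing,
\[ne=db\Phi(b)=d\Phi(b)=\Phi(n),\]
since $b\Phi(b)=\Phi(b)$ by Step 1. Thus $\Phi(n)=en=ne$, and $\Phi$ is implemented by idempotents.

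\emph{Anticipated obstacle.} The main care is in Step 1, namely verifying $e\leq b$ implies both $be=e$ and $eb=e$ in $I$. Here I would invoke \eqref{IdealInvertibles} and the identities $a\leq b\Rightarrow a=b\mathsf{s}(a)=\mathsf{r}(a)b$ in inverse semigroups. I would also check that the factorisation $n=db$ interacts correctly with the bimodule property $\Phi(dad')=d\Phi(a)d'$; this holds since $d\in D$.

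Finally, combining with the Remark preceding the statement, faithfulness follows automatically, so $D$ is quasi-Cartan in the sense of \cite{9a23}.
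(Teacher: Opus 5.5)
Your argument is correct and is essentially the paper's proof: factor $n=db$ with $d\in D$ and $b\in I$, use $\Phi(b)\leq b$ to get $b\Phi(b)=\Phi(b)=\Phi(b)b$, and combine $\Phi(db)=d\Phi(b)$ with $\Phi(b)\in D$ to obtain $\Phi(n)=n\Phi(b)=\Phi(b)n$. The only cosmetic difference is that the paper writes $d=\sum_i k_ie_i$ and uses the identity $e\Phi(b)=\Phi(eb)$ from the proof of \Cref{PseudoCartan} instead of citing commutativity of $D$ directly. That identity is also the cleanest justification of your one-sided step $\Phi(db)=d\Phi(b)$, since the expectation axiom is stated in the two-sided form $\Phi(dad')=d\Phi(a)d'$.
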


\begin{proof}
    By \Cref{PseudoCartan}, $D$ is pseudo-Cartan.
    For any $n\in N\subseteq DI$, there are $d\in D$ and $b\in I$ such that $n=db$.
    We can write $d$ as a linear combination $d=\sum_{i\leq n}k_ie_i$ of elements of $E$.
    For each $i\leq n$, $e_i\Phi(b)=\Phi(e_ib)$ as in the proof of \Cref{PseudoCartan}.
    Also, $\Phi(b)=b\Phi(b)=\Phi(b)b$ because $\Phi(b)\leq b$.
    Thus, we can compute 
    \[
        \Phi(db)
        =\sum_{i\leq n}k_ie_i\Phi(b)
        =d\Phi(b)
        =db\Phi(b)
    \]
    and 
    \[
        \Phi(b)db
        =\sum_{i\leq n}k_i\Phi(b)e_ib
        =\sum_{i\leq n}k_ie_i\Phi(b)b
        =\sum_{i\leq n}k_ie_i\Phi(b),
    \]
    so $\Phi(n)=n\Phi(b)=\Phi(b)n$, as required.
\end{proof}


Twisted Steinberg algebras of discrete twists over Hausdorff ample groupoids were introduced in \cite{ACCLMR22}.
Because $D$ is quasi-Cartan (\Cref{QuasiCartan}), $A$ is isomorphic to the twisted Steinberg algebra of a discrete twist $\Sigma$ over a Hausdorff ample groupoid $G$ by \cite[Theorem 6.6]{9a23}.
The groupoid $G$ is used to characterise when $D$ is Cartan or diagonal \cite[Proposition 7.1]{9a23}.
Here, we show $G$ is isomorphic to $\UU(I)$ (\Cref{GpdIsom}).
By combining \cite[Proposition 7.1]{9a23} with aspects of non-commutative Stone duality \cite{Lawson2023}, we characterise when $D$ is Cartan or diagonal in terms of $I$ (\Cref{CartanDiagonal}).

Like in \cite[\S5]{9a23}, we write $\Sigma$ for $\mathcal{U}(N)$.
For each $b\in I$ and $n\in N$, let 
\[O_b(I)\coloneqq\{U\in\mathcal{U}(I)\mathrel{|}b\in U\}\quad\text{and}\quad O_n(N)\coloneqq\{U\in\Sigma\mathrel{|}n\in U\}.\]
Also, let $\leq_N$ denote the order relation on $N$.
First we identify $\mathcal{U}(I)$ with a wide open subgroupoid of $\Sigma$.

\begin{proposition}\label{IsomToTwist}
    The map $\theta:U\mapsto U^{\leq_N}$ is an isomorphism of $\mathcal{U}(I)$ onto 
    \[\Sigma_I\coloneqq\bigcup_{b\in I}O_b(N),\] 
    which is a wide open subgroupoid of $\Sigma$.
\end{proposition}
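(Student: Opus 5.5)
The proof splits into three parts. First, $\theta$ is a bijection of $\mathcal{U}(I)$ onto $\Sigma_I$ with inverse $V\mapsto V\cap I$. Second, $\theta$ preserves products and inverses, and $\Sigma_I$ is a wide open subgroupoid. Third, $\theta$ is a homeomorphism. The basic tool is that $I$ is an inverse subsemigroup of $N$ with $E(I)=E(N)=E$ (\Cref{PseudoCartan}). Hence the order $\leq_N$ restricted to $I$ is the order of $I$, and by \eqref{IdealInvertibles} applied inside the inverse semigroup $N$, $I$ is downward closed in $N$. Indeed, if $n\leq_N b\in I$ then $n=b\,n^\dagger n$ with $n^\dagger n\in E(N)=E\subseteq I$, so $n\in I$.

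\textbf{Bijection.} For $U\in\mathcal{U}(I)$, $U^{\leq_N}$ is an up-set of $N$ closed under meets of bounded pairs, hence a filter. It avoids $0$, since $0\leq_N$ anything would force $0\in U$ by downward closure. For maximality I use \Cref{UltrafilterComplements}, or rather its analogue for inverse semigroups, which works because meets of elements below a common element exist. Suppose $n\notin U^{\leq_N}$ and $V\supseteq U^{\leq_N}$ is a proper filter of $N$ containing $n$. Fix $b\in U$; some $m\in V$ lies below both $n$ and $b$. Downward closure gives $m\in I$, so $V\cap I$ is a proper filter of $I$ containing $U$ and $m$. By maximality $m\in U$, hence $n\in U^{\leq_N}$, a contradiction. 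Conversely, for $V\in\Sigma_I$ pick $b\in V\cap I$. Every $v\in V$ has some $w\leq v,b$ in $V$, and $w\in I$, so $V=(V\cap I)^{\leq_N}$. Then $V\cap I$ is a proper filter of $I$, and it is maximal because any strictly larger proper filter $F$ of $I$ would give a strictly larger proper filter $F^{\leq_N}$. So $\theta$ is a bijection with inverse $V\mapsto V\cap I$.

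\textbf{Algebraic structure.} The products are $U\cdot W=(UW)^{\leq}$ in both semicategories, defined when $0\notin UW$, and in $\Sigma$ as in \cite[\S5]{9a23}. Since $(U^{\leq_N}W^{\leq_N})^{\leq_N}=(UW)^{\leq_N}$ by monotonicity of products, $\theta$ preserves composability and products. Inverses are given by $U\mapsto U^{-1}$, as in the proof of \Cref{UltraInvertible}, so $\theta$ is a groupoid homomorphism; here I use that $\mathcal{U}(I)$ is a groupoid by \Cref{UltraGroupoid}. It follows that $\Sigma_I$ is closed under products and inverses. Wideness holds because every unit ultrafilter of $\Sigma$ contains an idempotent of $E(N)=E\subseteq I$ by \eqref{Units}, so all units lie in $\Sigma_I$.

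\textbf{Topology.} $\Sigma_I$ is open as a union of basic open sets. Moreover $\theta[O_b(I)]=O_b(N)$ for $b\in I$, so $\theta$ maps the basis $\{O_b(I)\}$ onto the sets $O_b(N)$, $b\in I$. These form a basis of $\Sigma_I$: given $V\in O_n(N)\cap\Sigma_I$, choose $b\in V\cap I$ and $m\in V$ below $n$ and $b$; then $m\in I$ and $V\in O_m(N)\subseteq O_n(N)$. Hence $\theta$ is a homeomorphism.

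I expect the main obstacle to be bookkeeping about which notion of filter and ultrafilter $\Sigma=\mathcal{U}(N)$ uses in \cite{9a23}, in particular whether it contains $0$, and the corresponding characterisation of its product. The key lemma that makes every step work is the downward closure of $I$ in $N$.
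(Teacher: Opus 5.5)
Your proof is correct. The algebraic part matches the paper's: composability via $0\in UW\Leftrightarrow 0\in U^{\leq_N}W^{\leq_N}$, products via $(U^{\leq_N}W^{\leq_N})^{\leq_N}=(UW)^{\leq_N}$, and wideness via \eqref{Units} together with $E(I)=E(N)$.

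Where you differ is in how you get the bijection and the homeomorphism.

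\emph{The paper's route.} It works locally. For each $b\in I$ it composes the two homeomorphisms $O_b(I)\to\mathcal{U}(b^\geq)\to O_b(N)$ from \Cref{PrincipalRestriction}, implicitly using that $b^\geq$ is the same whether computed in $I$ or in $N$. This single step shows that $\theta$ lands in ultrafilters and is open, continuous and onto $\Sigma_I$. Injectivity is then handled separately: \Cref{UltrafilterComplements} gives $b\in U$ and $c\in V$ with $b\wedge c=0$, so $O_b(N)$ and $O_c(N)$ are disjoint.

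\emph{Your route.} You argue globally. You check maximality of $U^{\leq_N}$ and of $V\cap I$ directly and exhibit the explicit inverse $V\mapsto V\cap I$, so injectivity comes for free. You then obtain the homeomorphism from the fact that $\theta$ carries the basis $\{O_b(I)\}$ onto a basis of $\Sigma_I$.

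\emph{What each buys.} The paper's version is shorter because it recycles \Cref{PrincipalRestriction}. Yours is more self-contained. It also states and proves the downward closure $I^{\geq_N}\subseteq I$ via $n=b\,n^\dagger n$, which the paper only asserts and otherwise uses silently.

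Two small presentation points:
\begin{itemize}
\item Directedness of $U^{\leq_N}$ should be justified by noting that any two of its elements lie above some $u\wedge v\in U$. Closure under meets of bounded pairs is not the right criterion.
\item Your downward-closure argument is an analogue of \eqref{IdealInvertibles}, not an application of it.
\end{itemize}
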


\begin{proof}
    For all $U\in\mathcal{U}(I)$ and $b\in U$, observe $U^{\leq_N}=(U\cap b^\geq)^{\leq_N}$ by \eqref{Directed}.
    Thus, for all $b\in U$, $\theta$ restricts to the homeomorphism $U\mapsto(U\cap b^\geq)^{\leq_N}$ from $O_b(I)$ to $O_b(N)$ attained by composing the homeomorphisms $V\mapsto V\cap b^\geq$ from $O_b(I)$ to $\mathcal{U}(b^\geq)$ and $W\mapsto W^{\leq_N}$ from $\mathcal{U}(b^\geq)$ to $O_b(N)$ from \Cref{PrincipalRestriction}.
    Therefore, $\theta$ is an open continuous map from $\mathcal{U}(I)$ to $\Sigma$.
    Applying \Cref{PrincipalRestriction} again in the other direction, if $V\in O_b(N)$ for some $b\in I$, then $V=\theta(U)$ for some $U\in O_b(I)$.
    That is, $\theta(\mathcal{U}(I))=\bigcup_{b\in I}O_b(N)=\Sigma_I$.
    Also, $\theta$ is injective.
    Indeed, if $U,V\in\mathcal{U}(I)$ are distinct, there is $b\in U\setminus V$.
    Because $I$ is a meet-semilattice and $b\notin V$, \Cref{UltrafilterComplements} implies $b\wedge c=0$ for some $c\in V$, so $O_b(N)\cap O_c(N)=\emptyset$ because $I^{\geq_N}\subseteq I$.
    Since $\theta(U)\in O_b(N)$ and $\theta(V)\in O_c(N)$, we must have $\theta(U)\neq\theta(V)$.
    Therefore, $\theta$ is a homeomorphism from $\mathcal{U}(I)$ to $\Sigma_I$.
    Moreover, because $E(I)=E(N)$ (\Cref{PseudoCartan}) and unit ultrafilters are precisely the ultrafilters containing an idempotent (\Cref{UltraUnit}), $\theta$ restricts to a homeomorphism from $\mathcal{U}(I)^0$ to $\Sigma^0$.

    For any $U,V\in\mathcal{U}(I)$, $0\in UV$ if and only if $0\in U^{\leq_N}V^{\leq_N}=\theta(U)\theta(V)$, so $U\cdot V$ is defined if and only if $\theta(U)\cdot\theta(V)$ is defined.
    In this case $\theta(U)\cdot\theta(V)=(\theta(U)\theta(V))^{\leq_N}=(U^{\leq_N}V^{\leq_N})^{\leq_N}$ and $\theta(U\cdot V)=(U\cdot V)^{\leq_N}=((UV)^\leq)^{\leq_N}$, which both equal $(UV)^{\leq_N}$.
    Also, $\theta(U^{-1})=(U^{-1})^{\leq_N}=(U^{\leq_N})^{-1}=\theta(U)^{-1}$.
    It follows that $\Sigma_I$ is a subgroupoid of $\Sigma$ and $\theta$ is an isomorphim of $\mathcal{U}(I)$ onto $\Sigma_I$.
    Note $\Sigma_I$ is open because $\Sigma_I=\bigcup_{b\in I}O_b(N)$.
    Since $\theta$ restricts to a homeomorphism from $\mathcal{U}(I)^0$ to $\Sigma^0$, we have $\Sigma_I^0=\Sigma^0$, so $\Sigma_I$ is wide.
\end{proof}

Following \cite[\S5]{9a23}, $G$ denotes the set of equivalences classes of $\Sigma$ with respect to
\[U\simeq V\quad\Leftrightarrow\quad U=kV\text{ for some }k\in K^\times,\]
where $K^\times$ is the group of units of $K$.
The associated quotient map $q:\Sigma\to G$ makes $G$ a groupoid under the product 
\[q(U)\cdot q(V)\coloneqq q(U\cdot V)\text{ defined precisely when }U\cdot V\text{ is defined}\]
and inversion 
\[q(U)^{-1}\coloneqq q(U^{-1}).\]
We recall various properties of $G$ from \cite{9a23}.
For all $U\in\Sigma$, $q(\mathsf{s}(U))=\mathsf{s}(q(U))$ and $q(\mathsf{r}(U))=\mathsf{r}(q(U))$.
The collection $\{q(O_n(N))\mathrel{|}n\in N\}$ is a basis of compact open \emph{bisection}s of $G$ (i.e.~subsets of $G$ on which both $\mathsf{s}$ and $\mathsf{r}$ are injective) for the quotient topology on $G$.
In particular, $G$ is ample.
Moreover, because $D$ is quasi-Cartan, $G$ is Hausdorff by \cite[Theorem 5.6]{9a23}.
The quotient map $q$ is an open continuous groupoid homomorphism, which identifies $\Sigma^0$ with $G^0$.

We aim to show $q$ restricts to an isomorphism of $\Sigma_I$ onto $G$.
We will draw on a number of lemmas from \cite{9a23}, as well as the following observations.

\begin{lemma}
    For all $k\in K$ and $b\in I$, 
    \begin{equation}\label{CharacteristicIdempotents}
        0\neq kb\in E\quad\Rightarrow\quad k=1\quad\text{and}\quad b\in E,
    \end{equation}
    \begin{equation}\label{RingUnits}
        0\neq kb\in N\quad\Rightarrow\quad k\in K^\times.
    \end{equation}
    For all $j,k\in K^\times$, $0\neq a\in I$ and $b\in I$,
    \begin{equation}\label{NormaliserOrder}
        ja\leq_Nkb\quad\Leftrightarrow\quad j=k\quad\text{and}\quad a\leq b.
    \end{equation}
\end{lemma}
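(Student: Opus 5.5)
For \eqref{CharacteristicIdempotents}, I will use the meet operation as in the proof of \Cref{PseudoCartan}. If $0\neq kb\in E$ then $kb=(kb)(kb)=k^2b^2$, since $K$ is commutative and central. To extract information from this, I will apply $\wedge b$ to both sides. The left side gives $kb\wedge b=k(b\wedge b)=kb$. For the right side I need $b^2\wedge b$, which I will handle in two steps.

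First I show $b\in E$. Take $e=\Phi(b)=\max(b^\geq\cap E)$ and note that $kb\in E$ gives $kb=\Phi(kb)=k\Phi(b)=ke$. Hence $k(b-e)=0$. Since $e\leq b$, we have $b-e=b\setminus e\in I$ by the argument of \Cref{SemiBooleanRestriction}. Also $k\neq0$, because $kb\neq0$. So \eqref{TorsionFree} gives $b=e\in E$.

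Second, with $b$ idempotent, $kb=k^2b$ gives $(k-k^2)b=0$. By \eqref{TorsionFree}, $k=k^2$. As $K$ is indecomposable and $k\neq0$, this forces $k=1$.

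For \eqref{RingUnits}, suppose $0\neq kb\in N$ and let $m=(kb)^\dagger$. Then $(kb)m(kb)=kb$, and also $x:=m(kb)\in E(N)=E$ by \Cref{PseudoCartan}. I will compute $\Phi$-type quantities: $mb\in A$, so $kb=k^2\,b\,m\,b$. Then I write $k\cdot(bmb)\in I$-span and use $\wedge b$ to get $kb=k^2\,(bmb\wedge b)$. Writing $bmb\wedge b=\ell b$ via the linear functional $\chi_U$ for an ultrafilter $U\ni b$ gives $k=k^2\ell$. Here $b\neq0$, and evaluating at such a $U$ is legitimate because $\hat b(U)=1$ and $\widehat{kb}(U)=k$. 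Hence $k(1-k\ell)=0$ in $K$.

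To turn this into $k\in K^\times$, I will use the idempotent $e=k\ell\in K$, which satisfies $e^2=k\ell k\ell=k(\ell k)\ell=k\ell$ by commutativity and $k=k^2\ell$. Indecomposability gives $e\in\{0,1\}$. If $e=0$ then $k=k\cdot k\ell=0$, contradicting $kb\neq0$. So $e=1$ and $k$ is a unit. The main obstacle is making the evaluation $\chi_U(bmb)$ meaningful; it is fine, since $\chi_U$ is defined on all of $A$ and $\chi_U(kbmkb)=\chi_U(kb)$. The step I would actually verify is that $\chi_U$ is multiplicative enough here, namely $\chi_U(kbmkb)=k^2\chi_U(bmb)$, which holds by linearity alone. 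So I set $\ell:=\chi_U(bmb)$, apply $\chi_U$ to $kb=k^2bmb$, and obtain $k=k^2\ell$ directly, with no need for meets.

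For \eqref{NormaliserOrder}, I use that the order in the inverse semigroup $N$ is $x\leq_N y\iff x=y\,x^\dagger x$. For the direction $\Leftarrow$: $(ja)^\dagger=j^{-1}a^{-1}$, so $ja=ja\,a^{-1}a=ja^{-1}\cdots$, and concretely $kb\,(j^{-1}a^{-1})(ja)=k\,b\,a^{-1}a=ka=ja$. For the direction $\Rightarrow$: $ja=kb\,(a^{-1}a)=k\,(ba^{-1}a)$, where $c:=ba^{-1}a\in I$. Apply $\chi_U$ for an ultrafilter $U\ni a$, so that $j=k\chi_U(c)$. Then use $c\wedge a$ and $a\wedge a$: we have $ja=kc$, hence $j a=k(c\wedge a)$, and $c\wedge a\leq a$ gives $c\wedge a=a\,\mathsf s(c\wedge a)$. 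Now apply $\wedge(a\setminus(c\wedge a))$ to $ja=kc$ to get $j(a\setminus c\wedge a)=0$, so \eqref{TorsionFree} yields $a\leq c$. Two elements of $I$ with the same source and $a\leq c$ are equal by \eqref{TrivialRestriction}, so $a=c\leq b$. Finally $ja=ka$ gives $j=k$ by \eqref{TorsionFree}.
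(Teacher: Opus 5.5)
Your proof is correct. For \eqref{RingUnits} and the forward direction of \eqref{NormaliserOrder} it takes a genuinely different route from the paper's.

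For \eqref{CharacteristicIdempotents} you use the same ingredients as the paper: $\Phi(kb)=kb$, \eqref{TorsionFree} and indecomposability. The only difference is order: you get $b=\Phi(b)$ first from $k(b-\Phi(b))=0$ with $b-\Phi(b)\in I$. The paper instead gets $k=k^2$ from idempotency of $k\Phi(b)$.

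For \eqref{RingUnits} the paper relies on the standing hypothesis $N\subseteq DI$. It writes $(kb)^\dagger=\sum_i k_ie_ic$ and picks $j$ with $kbk_je_jc\neq0$. This element lies below $kb(kb)^\dagger\in E$, so it is a nonzero idempotent, and \eqref{CharacteristicIdempotents} then gives $kk_j=1$. You instead apply the linear functional $\chi_U$, for an ultrafilter $U\ni b$, to $kb=k^2\,bmb$. This gives $k=k^2\ell$, and you finish by noting that $k\ell$ is an idempotent of the indecomposable ring $K$. Your argument is shorter and more general. It only needs $kb$ to be von Neumann regular in $A$, so it uses neither $N\subseteq DI$ nor \eqref{CharacteristicIdempotents}. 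The paper's version, by contrast, stays inside the normaliser/idempotent calculus it uses throughout that section.

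For the forward direction of \eqref{NormaliserOrder}, the paper again reduces to \eqref{CharacteristicIdempotents}, via $j^{-1}k\,ba^{-1}=aa^{-1}\in E\setminus\{0\}$. You instead meet $ja=k\,ba^{-1}a$ with $a\setminus(c\wedge a)$ and use \eqref{TorsionFree} to get $a\le c$. Since $c=ba^{-1}a\le b$, this already gives $a\le b$ and $c=a$. So your appeal to \eqref{TrivialRestriction} can be dropped; there you asserted equal sources without checking, although it is true since $\mathsf{s}(c)=\mathsf{s}(a)\mathsf{s}(b)$.

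In a write-up, also delete the abandoned false starts. The claim $bmb\wedge b=\ell b$ is not justified a priori. The opening $\wedge b$ computation in \eqref{CharacteristicIdempotents} and the value $\chi_U(c)$ in \eqref{NormaliserOrder} are never used.
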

\begin{proof}
    Fix $k\in K$ and $b\in I$.
    Suppose $0\neq kb\in E$.
    Note $k\Phi(b)=\Phi(kb)=kb\in E\setminus\{0\}$.
    Then, $k\Phi(b)=k\Phi(b)k\Phi(b)=k^2\Phi(b)$, so $k=k^2$ by \eqref{TorsionFree}, which means $k=1$ because $K$ is indecomposable.

    Now suppose $0\neq kb\in N$.
    Then, $(kb)^\dagger\in N\subseteq DI$, so $(kb)^\dagger=\sum_{i\leq n}k_ie_ic$, for $\{k_1,\ldots,k_n\}\subseteq K$, disjoint $\{e_1,\ldots,e_n\}\subseteq E$ and $c\in I$.
    There must be some $j\leq n$ such that $kbk_je_jc\neq 0$ -- otherwise $kb=kb(kb)^\dagger kb=0$.
    Because the $e_i$ are disjoint, multiplying $(kb)^\dagger$ on the left by $e_j$ shows $k_je_jc\leq_N(kb)^\dagger$, and so $kbk_je_jc\leq_Nkb(kb)^\dagger\in E$.
    Since $I$ commutes with $K$, we have $kk_jbe_jc=kbk_je_jc\in E$, and \eqref{CharacteristicIdempotents} implies $kk_j=1$, so $k\in K^\times$.

    Now fix $j,k\in K^\times$, $0\neq a\in I$ and $b\in I$.
    As per \cite[Lemma 2.16(b)]{9a23}, $\leq_N$ is preserved under left multiplication by $K^\times$.
    In particular, the ``$\Leftarrow$'' direction holds.
    Now, if $ja\leq_Nkb$, then we have $a\leq_Nj^{-1}kb$.
    Thus, $j^{-1}kba^{-1}=j^{-1}kba^{-1}aa^{-1}=aa^{-1}\in E$.
    Notice $0\neq j^{-1}kba^{-1}$ -- otherwise $a=aa^{-1}a=0a=0$, a contradiction.
    Hence, \eqref{CharacteristicIdempotents} implies $j^{-1}k=1$, so $j=k$ and $a\leq j^{-1}kb=1b=b$.
\end{proof}

\begin{lemma}\label{UltraBisectionScalars}
    For all $U\in\Sigma$ and $n\in U$, there are $k\in K^\times$ and $b\in I$ such that $n\geq_Nkb\in U$.
\end{lemma}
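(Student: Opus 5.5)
Given $U\in\Sigma=\mathcal{U}(N)$ and $n\in U$, the plan is to use the standing hypothesis $N\subseteq DI$ to write $n$ in a canonical form and then use that $U$ is an ultrafilter (hence a prime filter) to single out one summand lying in $U$. First write $n=db$ with $d\in D$ and $b\in I$. Expanding $d$ in $E$ and passing to relative complements as in the discussion before the construction of $\chi_U$, we get $d=\sum_{i\leq m}k_ie_i$ with the $e_i\in E$ pairwise disjoint and $k_i\neq0$. We may also assume $e_i\leq bb^{-1}$, since $db=d\,bb^{-1}b$. Then
\[n=\sum_{i\leq m}k_ie_ib,\]
where the $e_ib\in I$ are pairwise disjoint and all lie below $b$.

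Next we show that each nonzero summand $k_ie_ib$ satisfies $k_ie_ib\leq_N n$. Indeed, $e_i\in E=E(N)$ and $e_in=k_ie_ib$ by disjointness, and left multiplication by an idempotent gives an element below $n$ in the inverse semigroup $N$. Since $k_ie_ib=e_in\in N$ and it is nonzero, \eqref{RingUnits} gives $k_i\in K^\times$. So each summand has the required form $kb'$ with $k\in K^\times$ and $b'\in I$, and it is bounded above by $n$.

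The main obstacle is to show that some summand actually lies in $U$. Let $e=\sum_i e_i=\bigvee_i e_i\in E$, so that $en=n$. Since $nn^\dagger\in U^{\mathsf R}$ in the groupoid sense, and $U$ is an ultrafilter in $N$, we argue via idempotents. The range unit $\mathsf{r}(U)$ is an ultrafilter of $N$ containing $nn^\dagger$ and hence $e\,nn^\dagger$. Its trace on $E$ is an ultrafilter of the Boolean-type semilattice $E$ containing $e\,nn^\dagger=\bigvee_i e_inn^\dagger$. By \Cref{UltraPrime}, applied in the semi-Boolean algebra $E$ (disjoint finite joins are sums by \eqref{SumsMeetsJoins}), some $e_jnn^\dagger$ lies in $\mathsf{r}(U)$. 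Then $e_jn=e_jnn^\dagger n\in \mathsf{r}(U)\cdot U=U$, because $U$ is an up-set and the product of a unit with $U$ returns $U$, as in \Cref{UltraUnit}. Thus $k_je_jb=e_jn\in U$ and $n\geq_N k_je_jb$, as required.

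The delicate point is justifying that the trace $\mathsf{r}(U)\cap E$ is prime with respect to these finite joins. If the primeness argument in $E$ proves awkward, I would instead argue directly: supposing no $e_jn\in U$, use \Cref{UltrafilterComplements} in $N$ to obtain $u_j\in U$ with $e_jn\wedge u_j=0$. Meeting these finitely many $u_j$ with $n$ gives some $u\in U$ below $n$ with $e_ju=0$ for all $j$. Then $u=eu=0$, contradicting properness of $U$.
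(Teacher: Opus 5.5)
Your proof is correct and follows the same route as the paper: decompose $n=\sum_i k_if_ib$ with disjoint $f_i\in E$ via $N\subseteq DI$, observe $\sum_i f_in=n$ so that some summand $f_jn=k_jf_jb$ lies in $U$, and apply \eqref{RingUnits} to get $k_j\in K^\times$. The differences are only in how two steps are justified. The paper gets the summand in $U$ by citing \cite[Lemma 5.2]{9a23}, whereas you prove it via primeness of the ultrafilter $\mathsf{r}(U)\cap E$ of the semi-Boolean algebra $E$. The paper checks $k_jf_jb\leq_N n$ by computing $\mathsf{s}(k_jf_jb)=b^{-1}f_jb$, whereas you use $en=n(n^\dagger en)$ in the inverse semigroup $N$. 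Your fallback argument would need its meets taken inside the Boolean algebra $n^{\geq_N}$, since $N$ is not known to be a meet-semilattice.
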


\begin{proof}
    Take any $n\in U$.
    Since $N\subseteq DI$, we can write $n=\sum_{i\leq m}k_if_ib$ for $\{k_1,\ldots,k_n\}\subseteq K$, disjoint $\{f_1,\ldots,f_n\}\subseteq E$ and $b\in I$.
    Applying \cite[Lemma 5.2]{9a23}, we compute 
    \[\sum_{i\leq m}f_in=\sum_{i,j\leq m}k_jf_if_jb=\sum_{i\leq m}k_if_ib=n,\]
    and conclude there is an $i$ such that $k_if_ib\in U$.
    In particular, $0\neq k_if_ib\in N$, and so $k_i\in K^\times$ by \eqref{RingUnits}.
    It remains to show $k_if_ib\leq_Nn$.
    Since $k_i\in K^\times$, \cite[Lemma 2.16]{9a23} tells us $(k_if_ib)^\dagger=k_i^{-1}(f_ib)^\dagger$.
    Since $f_ib\in I$, $k_i^{-1}(f_ib)^\dagger=k_i^{-1}(f_ib)^{-1}=k_i^{-1}b^{-1}f_i$.
    Because $I$ commutes with $K$, we have $\mathsf{s}(k_if_ib)=k_i^{-1}b^{-1}f_ik_if_ib=b^{-1}f_ib$.
    Thus, $n\mathsf{s}(k_if_ib)=\sum_{j\leq m}k_jf_jbb^{-1}f_ib=\sum_{j\leq m}k_jf_jf_ib=k_if_ib$, and so $k_if_ib\leq_Nn$.
\end{proof}

\begin{proposition}\label{IsomToG}
    The map $q$ restricts to an isomorphism of $\Sigma_I$ onto $G$.
\end{proposition}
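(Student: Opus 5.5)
We need to show that $q|_{\Sigma_I}$ is a bijective, continuous, open groupoid homomorphism onto $G$. Since $q$ is an open continuous groupoid homomorphism on $\Sigma$ and $\Sigma_I$ is an open subgroupoid (\Cref{IsomToTwist}), the restriction is automatically a continuous open homomorphism. So the work reduces to surjectivity and injectivity; a continuous open bijective homomorphism is then an isomorphism of topological groupoids.

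\textbf{Surjectivity.} Take $q(U)$ with $U\in\Sigma$ and pick any $n\in U$. By \Cref{UltraBisectionScalars} there are $k\in K^\times$ and $b\in I$ with $kb\in U$. Then $k^{-1}U$ is again an ultrafilter of $N$: left multiplication by $k^{-1}\in K^\times$ is an order automorphism of $N$ by \cite[Lemma 2.16]{9a23}, so it maps ultrafilters to ultrafilters. Moreover $k^{-1}U$ contains $k^{-1}kb=b$, using that $I$ commutes with $K$. Hence $k^{-1}U\in O_b(N)\subseteq\Sigma_I$, and $q(k^{-1}U)=q(U)$.

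\textbf{Injectivity.} Suppose $U,V\in\Sigma_I$ with $U=kV$ for some $k\in K^\times$. Choose $b\in U\cap I$ and $c\in V\cap I$. Then $kc\in U$, so there is $n\in U$ with $n\leq_N b$ and $n\leq_N kc$. Applying \Cref{UltraBisectionScalars} to $n$ gives $j\in K^\times$ and $a\in I$ with $ja\leq_N n$ and $ja\in U$, where $a\neq 0$ since $U$ is proper. We now apply \eqref{NormaliserOrder} twice:
\begin{itemize}
\item from $ja\leq_N 1b$ we get $j=1$ and $a\leq b$;
\item from $ja\leq_N kc$ we get $j=k$ and $a\leq c$.
\end{itemize}
Hence $k=1$ and $U=V$.

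The main obstacle is this injectivity step. It requires finding a common lower bound of the form $ja$ with $a\in I$, and then using \eqref{NormaliserOrder} to force the scalar to equal $1$. The argument relies on $U$ being directed in $N$ and on \Cref{UltraBisectionScalars} applied inside $U$. We must also check that the scalar action on $\Sigma$ is compatible with these order facts, including that $K^\times$ commutes with $I$ so that $k^{-1}(kb)=b$.

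Once bijectivity is established, compatibility with products and inverses is inherited from $q$. Openness and continuity on $\Sigma_I$ come from $\Sigma_I$ being open in $\Sigma$ together with $q$ being open and continuous. This yields the isomorphism $\Sigma_I\cong G$, and composing with $\theta$ from \Cref{IsomToTwist} gives $\mathcal{U}(I)\cong G$.
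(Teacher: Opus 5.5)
Your proof is correct and follows the paper's argument. For surjectivity you pick $kb\in U$ via \Cref{UltraBisectionScalars} and pass to $k^{-1}U\in O_b(N)$; for injectivity you take a common lower bound in $U$ of $b$ and $kc$ and use \eqref{NormaliserOrder} to force $k=1$. The only difference is that the paper observes this common lower bound already lies in $I$ (anything $\leq_N b$ has the form $be$ with $e\in E(N)=E(I)$), so it applies \eqref{NormaliserOrder} once directly; your second call to \Cref{UltraBisectionScalars} to produce $ja$ is an unnecessary but harmless detour.
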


\begin{proof}
    Recall $q$ is an open continuous groupoid homomorphism of $\Sigma$ onto $G$, which identifies $\Sigma^0$ with $G^0$, so it remains to show that $G\subseteq q(\Sigma_I)$ and $q$ is injective on $\Sigma_I$.
    Fix $U\in \Sigma$ and consider $q(U)\in G$.
    By \Cref{UltraBisectionScalars}, we have $kb\in U$ for some $k\in K^\times$ and $b\in I$.
    Then, $b=k^{-1}kb\in k^{-1}U$.
    The groupoid $\Sigma$ is closed under scalar multiplication by $K^\times$ (see \cite[Lemma 5.3]{9a23}), so $k^{-1}U\in\Sigma$.
    By definition of the equivalence relation, we get $q(U)=q(k^{-1}U)\in q(\Sigma_I)$, as needed.
    Now suppose $q(U)=q(V)$ for some $U,V\in\Sigma_I$, and find $k\in K^\times$ such $U=kV$.
    By definition of $\Sigma_I$, there are $b\in U\cap I$ and $c\in V\cap I$.
    Then, $b,kc\in U$ and \eqref{Directed} implies there is some $a\in U$ such that $a\leq_Nkb,c$.
    Since $a\leq_Nc\in I$, we also have $a\in I$.
    Also, $a\neq 0$ because $a\in U$.
    Thus, $1a=a\leq_Nkb$ implies $1=k$ by \eqref{NormaliserOrder}, and so $U=1V=V$.
\end{proof}

\begin{theorem}\label{GpdIsom}
    The map $U\mapsto q(U^{\leq_N})=q(\theta(U))$ is an isomorphism of $\mathcal{U}(I)$ onto $G$.
\end{theorem}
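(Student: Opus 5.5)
The statement is a direct composition of \Cref{IsomToTwist} and \Cref{IsomToG}, so the plan is to check that the two isomorphisms compose correctly at the level of structure. First, \Cref{IsomToTwist} says that $\theta\colon U\mapsto U^{\leq_N}$ is an isomorphism of topological groupoids from $\mathcal{U}(I)$ onto the wide open subgroupoid $\Sigma_I$ of $\Sigma$. Concretely, $\theta$ is a homeomorphism, preserves when products are defined, preserves products and inverses, and restricts to a homeomorphism $\mathcal{U}(I)^0\to\Sigma^0=\Sigma_I^0$. Second, \Cref{IsomToG} says that $q|_{\Sigma_I}$ is an isomorphism of $\Sigma_I$ onto $G$.

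The composite $q\circ\theta$ is then a bijection $\mathcal{U}(I)\to G$ that preserves composability, products and inverses. This holds because each factor is a groupoid isomorphism, and products in $G$ are defined through $q(U)\cdot q(V)=q(U\cdot V)$ exactly when $U\cdot V$ is defined.

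Both groupoids are topological, so I would also verify that the composite is a homeomorphism. For $\theta$ this is already part of \Cref{IsomToTwist}. For $q|_{\Sigma_I}$, recall that $q$ is open and continuous on $\Sigma$. Since $\Sigma_I$ is open in $\Sigma$, the restriction $q|_{\Sigma_I}$ is still continuous and open, because open subsets of $\Sigma_I$ are open in $\Sigma$. A continuous open bijection is a homeomorphism, so $q|_{\Sigma_I}$ is one, and hence so is the composite.

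The only point requiring any care is that \Cref{IsomToG} is stated algebraically, so the topological part has to be extracted as above. The key fact there is that $\Sigma_I$ is open in $\Sigma$, which is recorded in \Cref{IsomToTwist}. No other obstacle is expected.
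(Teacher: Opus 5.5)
Your proposal is correct and matches the paper's proof, which simply composes the isomorphisms from \Cref{IsomToTwist} and \Cref{IsomToG}. Your check that $q|_{\Sigma_I}$ is a homeomorphism, being the restriction of an open continuous map to the open set $\Sigma_I$ and bijective, makes explicit what the paper's proof of \Cref{IsomToG} takes for granted.
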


\begin{proof}
    Compose the isomorphisms from \Cref{IsomToTwist,IsomToG}.
\end{proof}

Building on results from \cite[Proposition 7.1]{9a23} and using non-commutative Stone duality \cite{Lawson2023}, we now use \Cref{GpdIsom} to characterise when $D$ is a Cartan or diagonal subalgebra of $A$ in terms of the inverse semigroup $I$ (\Cref{CartanDiagonal}).

\begin{corollary}
    \label{CartanDiagonal}
    If all compatible $a,b\in I$ are bounded, then
    \[D\text{ is Cartan}\quad\Leftrightarrow\quad\UU(I)\text{ is effective}\quad\Leftrightarrow\quad I\text{ is fundamental,}\]
    \[D\text{ is diagonal}\quad\Leftrightarrow\quad\UU(I)\text{ is principal}\quad\Leftrightarrow\quad I\text{ is basic.}\]
\end{corollary}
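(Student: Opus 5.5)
The plan is to split each chain of equivalences at the groupoid $\UU(I)$. The first equivalence in each line will come from \cite[Proposition 7.1]{9a23} transported along \Cref{GpdIsom}. The second will come from non-commutative Stone duality as in \cite{Lawson2023}.

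\textbf{First equivalences.} By \Cref{QuasiCartan}, $D$ is quasi-Cartan, so \cite[Proposition 7.1]{9a23} applies. It states that $D$ is Cartan exactly when the groupoid $G$ underlying the discrete twist $\Sigma$ is effective, and diagonal exactly when $G$ is principal. \Cref{GpdIsom} gives a groupoid isomorphism $\UU(I)\to G$, namely the composite of the homeomorphism $\theta$ from \Cref{IsomToTwist} with the restriction of the open continuous map $q$ from \Cref{IsomToG}. Hence it is a topological groupoid isomorphism. Effectiveness (the interior of the isotropy equals the unit space) and principality (the isotropy equals the unit space) are invariant under such isomorphisms. This gives $D$ Cartan $\Leftrightarrow$ $\UU(I)$ effective and $D$ diagonal $\Leftrightarrow$ $\UU(I)$ principal.

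\textbf{Second equivalences.} Here the hypothesis that compatible pairs in $I$ are bounded is used. Combined with \Cref{SemiBooleanRestriction}, which makes $I$ a semi-Boolean algebra and hence a conditional join-semilattice, and with $E(I)$ being a join-semilattice, it makes $I$ a Boolean inverse semigroup in the sense of \cite{Lawson2023}: compatible pairs have joins, and multiplication distributes over them. By \Cref{RSrep} and \Cref{AmpleSemi}, $\UU(I)$ is a Hausdorff ample groupoid and $b\mapsto O_b$ identifies $I$ with a basis of compact open bisections. Taking joins of compatible pairs, this basis should be the whole Boolean inverse semigroup of compact open bisections. I will check this via \Cref{UltraGroupoid}, together with the fact that every compact open bisection is a finite disjoint union of basic sets $O_b$ whose pairwise joins exist by hypothesis. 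Then $\UU(I)$ is the Stone dual groupoid of $I$, and I invoke the results of \cite{Lawson2023}: a Boolean inverse semigroup is fundamental iff its dual groupoid is effective, and basic iff its dual groupoid is principal.

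\textbf{Main obstacle.} The delicate step is confirming that the hypotheses of the duality in \cite{Lawson2023} hold. First, $\{O_b\mathrel{|}b\in I\}$ must be closed under the relevant joins, so that it is all compact open bisections, not merely a basis. Second, the definitions of \emph{fundamental} and \emph{basic} used there must match ours. If \cite{Lawson2023} only treats the unital or distributive case, I would first verify distributivity: $a(b\vee c)=ab\vee ac$ comes from \cite[Lemma 5.6]{Kudryavtseva2025}, and right distributivity from \Cref{SemiBooleanRestriction}. I would then pass to the isomorphic copy of $I$ inside the compact open bisections via \Cref{RSrep}.
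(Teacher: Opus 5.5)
Your proposal is correct and follows the same route the paper sketches: the first equivalence in each line comes from \cite[Proposition 7.1]{9a23} transported along the topological groupoid isomorphism of \Cref{GpdIsom}, and the second from non-commutative Stone duality \cite{Lawson2023}, where the boundedness hypothesis (together with the semi-Boolean structure from \Cref{SemiBooleanRestriction}) is exactly what makes $I$ a Boolean inverse $\wedge$-semigroup. Your extra check that the sets $O_b$ exhaust the compact open bisections is harmless but not needed, since by \Cref{UltraPrime} $\UU(I)$ is already Lawson's groupoid of prime filters of $I$.
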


Lastly, we show the discrete twist $\Sigma$ over $G$ is isomorphic to the trivial discrete twist $K^\times\times\mathcal{U}(I)$ over $\mathcal{U}(I)$.
The set $K^\times\times\mathcal{U}(I)$, endowed with the product topology with respect to the discrete topology on $K^\times$, is a topological groupoid under the product
\[(j,U)\cdot(k,V)\coloneqq(jk,U\cdot V)\text{ defined precisely when }U\cdot V\text{ is defined}\]
and inversion 
\[(j,U)^{-1}\coloneqq(j^{-1},U^{-1}).\]
Building on the isomorphism $\theta$ of $\mathcal{U}(I)$ onto $\Sigma_I$ from \Cref{IsomToTwist}, we show $K^\times\times\mathcal{U}(I)$ is isomorphic to $\Sigma$.

\begin{proposition}\label{TwistIso}
    The map $(k,U)\mapsto k\theta(U)$ is an isomorphism of $K^\times\times\mathcal{U}(I)$ onto $\Sigma$.
\end{proposition}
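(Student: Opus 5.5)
The plan is to check, in order, that $\psi:(k,U)\mapsto k\theta(U)$ is well defined, a groupoid homomorphism, bijective, and a homeomorphism. Throughout I use three facts from \cite[Lemmas 5.3, 2.16]{9a23}: $\Sigma$ is closed under scalar multiplication by $K^\times$; $(kV)\cdot(jW)=kj(V\cdot W)$ whenever either side is defined; and $(kV)^{-1}=k^{-1}V^{-1}$.

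\emph{Well defined and homomorphism.} Since $K^\times$ acts on $\Sigma$, each $k\theta(U)$ lies in $\Sigma$. Because $K$ is commutative and central, $k\theta(U)\cdot j\theta(V)$ is defined exactly when $\theta(U)\cdot\theta(V)$ is, and then it equals $kj\,\theta(U\cdot V)$. This uses that $\theta$ is a groupoid isomorphism by \Cref{IsomToTwist}. Inverses behave the same way: $(k\theta(U))^{-1}=k^{-1}\theta(U^{-1})$. So $\psi$ is a groupoid homomorphism.

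\emph{Surjectivity.} Take $V\in\Sigma$. By \Cref{UltraBisectionScalars} there are $k\in K^\times$ and $b\in I$ with $kb\in V$. Then $b\in k^{-1}V\in\Sigma$, so $k^{-1}V\in\Sigma_I=\theta(\mathcal{U}(I))$. Hence $V=k\theta(U)$ for some $U\in\mathcal{U}(I)$.

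\emph{Injectivity.} I expect this to be the main point. Suppose $k\theta(U)=j\theta(V)$. Apply $q$, which identifies $V'$ with $kV'$. This gives $q(\theta(U))=q(\theta(V))$, and \Cref{GpdIsom} then yields $U=V$. It remains to show $k=j$. Choose $b\in U\subseteq I$. Then $kb$ and $jb$ both lie in the filter $k\theta(U)$. By \eqref{Directed} there is $a$ in this filter with $a\leq_N kb$ and $a\leq_N jb$. Write $a=k c$ with $c=k^{-1}a$. From $kc\leq_N kb$ and \eqref{NormaliserOrder}, applied after checking $c\in I$, we get $c\leq b$. The required membership $c\in I$ follows since $c\leq_N b$ and $I^{\geq_N}\subseteq I$, as used in \Cref{IsomToTwist}. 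Also $c\neq0$, because $0$ is not in a proper filter. Then $kc\leq_N jb$ and \eqref{NormaliserOrder} give $k=j$.

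\emph{Topology.} For $k\in K^\times$ and $b\in I$, the map sends the basic open set $\{k\}\times O_b(I)$ onto $kO_b(N)=O_{kb}(N)$, which is open. The restriction there is $U\mapsto k\theta(U)$, a composite of the homeomorphism $\theta$ with multiplication by $k$. Multiplication by $k$ is a homeomorphism of $\Sigma$ with inverse multiplication by $k^{-1}$, since it maps $O_n(N)$ to $O_{kn}(N)$. So $\psi$ is a continuous open bijection, hence a homeomorphism. Together with the previous steps, $\psi$ is an isomorphism of topological groupoids.
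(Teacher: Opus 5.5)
Your proof is correct. Surjectivity and the homomorphism property go exactly as in the paper; both lean on \cite[Lemma 5.3]{9a23}. You organise injectivity and the topological part differently.

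For injectivity, the paper works directly. From $jb\in j\theta(U)=k\theta(V)$ it writes $jb=kn$ with $n\in\theta(V)$, picks $c\in V$ with $c\leq_N n$, and obtains $kc\leq_N jb$. Applying \eqref{NormaliserOrder} gives $k=j$, after which $U=V$ is just injectivity of $\theta$. You first obtain $U=V$ by applying $q$ and invoking \Cref{GpdIsom}, and then run a directedness-plus-\eqref{NormaliserOrder} argument inside the ultrafilter $k\theta(U)$ to get $k=j$. That is legitimate, since \Cref{GpdIsom} is already available. It is somewhat roundabout, though: the injectivity of $q$ on $\Sigma_I$ in \Cref{IsomToG} is proved by essentially the argument you then repeat, so the paper's ordering is more self-contained.

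For the topology, the paper uses \Cref{UltraBisectionScalars} to see that the sets $O_{jb}(N)$ form a basis of $\Sigma$, so the bijection carries a basis onto a basis. You get openness from the images $O_{kb}(N)$ of the basic sets $\{k\}\times O_b(I)$. You get continuity from the local description as $\theta$ followed by the homeomorphism $W\mapsto kW$. This avoids the basis claim at the cost of checking $kO_n(N)=O_{kn}(N)$, which is easy.

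One step needs tidying. As written, you deduce $c\leq b$ from \eqref{NormaliserOrder} after checking $c\in I$, and then justify $c\in I$ by $c\leq_N b$, which reads as circular. The missing link is that $a\leq_N kb$ gives $c=k^{-1}a\leq_N b$, because left multiplication by $K^\times$ preserves $\leq_N$ \cite[Lemma 2.16(b)]{9a23}. This is the same fact the paper uses for $kc\leq_N kn$. Then $c\in I\setminus\{0\}$ by $I^{\geq_N}\subseteq I$. Applying \eqref{NormaliserOrder} to $kc\leq_N jb$ gives $k=j$ directly, so the intermediate conclusion $c\leq b$ is not needed.
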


\begin{proof}
    To see the map is injective, suppose $j\theta(U)=k\theta(V)$ for $j,k\in K^\times$ and $U,V\in\mathcal{U}(I)$.
    Taking any $b\in U$, we have $jb\in j\theta(U)=k\theta(V)$, so $jb=kn$ for some $n\in\theta(V)$.
    Find $c\in V$ such that $c\leq_Nn$.
    Then, $kc\leq_Nkn=jb$.
    By \eqref{NormaliserOrder}, $k=j$, so $\theta(U)=\theta(V)$ and hence $U=V$.
    For surjectivity, suppose $V\in\Sigma$.
    By \Cref{UltraBisectionScalars}, there are $j\in K^\times$ and $b\in I$ such that $kb\in V$.
    Then, $b=k^{-1}kb\in k^{-1}V\in\Sigma_I$, so $k^{-1}V=\theta(U)$ for some $U\in\mathcal{U}(I)$, and $(k,U)\mapsto k\theta(U)=kk^{-1}V=V$, as needed.
    Because $E(I)=E(N)$, the map identifies the units of $K^\times\times\mathcal{U}(I)$ with the units of $\Sigma$.
    It follows from various properties in \cite[Lemma 5.3]{9a23} that the map also identifies composable pairs with composable pairs, preserves the product and preserves the inversion.
    
    The map identifies the basic open sets $\{j\}\times O_b(I)$ with the open sets $O_{jb}(N)\subseteq\Sigma$, for $k\in K^\times$ and $b\in I$.
    Moreover, the collection of $O_{jb}(N)$ for $j\in K^\times$ and $b\in I$ form a basis for the topology on $\Sigma$.
    Indeed, if $U\in O_n(N)$ for a given $n\in N$, \Cref{UltraBisectionScalars} tells us there are $j\in K^\times$ and $b\in I$ such that $n\geq_Nkb\in U$.
    Then, $U\in O_{kb}(N)\subseteq O_n(N)$ by \eqref{Upset}.
    Hence, the map is open and continuous.
\end{proof}

Since $q(k\theta(U))=q(\theta(U))$ for all $k\in K^\times$ and $U\in\mathcal{U(I)}$, the following diagram commutes.
\begin{center}
    \begin{tikzcd}[row sep=1cm, column sep=2cm]
    K^\times\times\mathcal{U}(I) \arrow[r, "{(k,U)\mapsto U}"] \arrow[d, "{(k,U)\mapsto k\theta(U)}"'] & \mathcal{U}(I) \arrow[d, "{U\mapsto q(\theta(U))}"] \\
    \Sigma \arrow[r, "q"]                                                          & G                               
    \end{tikzcd}
\end{center}
Moreover, the actions $j\cdot(k,U)\coloneqq(jk,U)$ and $j\cdot V\coloneqq jV$ of $K^\times$ on $K^\times\times\mathcal{U}(I)$ and $\Sigma$, respectively, are preserved by the isomorphism from \Cref{TwistIso}.
That is, $(jk)\theta(U)=j(k\theta(U))$, for all $j,k\in K^\times$ and $U\in\mathcal{U}(I)$.
Therefore, the discrete twist $\Sigma$ over $G$ is isomorphic to the trivial discrete twist $K^\times\times\mathcal{U}(I)$ over $\mathcal{U}(I)$.

\newcommand{\etalchar}[1]{$^{#1}$}

\end{document}